\documentclass[a4paper,11pt]{article}
\usepackage{fullpage}
\usepackage{amsmath,amsfonts,amssymb,amsthm}
\usepackage{mathabx}
\usepackage{dsfont}
\usepackage{calrsfs}
\usepackage{graphicx}
\usepackage{mathrsfs}
\usepackage{color}
\usepackage[dvipsnames]{xcolor}
\usepackage{tikz}
\usetikzlibrary{arrows.meta}
\usepackage{hyperref}
\numberwithin{equation}{section}

\providecommand{\R}{\mathbb{R}}
\providecommand{\C}{\mathbb{C}}
\providecommand{\N}{\mathbb{N}}

\renewcommand{\Re}{\mathop{\text{Re}}}
\renewcommand{\Im}{\mathop{\text{Im}}}
\renewcommand{\div}{\operatorname{div}}
\newcommand{\curl}{\operatorname{curl}}
\newcommand{\Id}{\operatorname{Id}}
\newcommand{\supp}{\operatorname{Supp}}

\newcommand{\sq}{\mathchoice
    {\raisebox{-0.2ex}{\scalebox{1.7}{$\diamond$}}} 
    {\raisebox{-0.2ex}{\scalebox{1.7}{$\diamond$}}}
    {\raisebox{-0.1ex}{\scalebox{1.2}{$\diamond$}}}
    {\raisebox{-0.1ex}{\scalebox{0.9}{$\diamond$}}}  }
\newcommand{\sql}{\mathchoice
    {\raisebox{-0.2ex}{\scalebox{0.8}[1.7]{$\triangleleft$}}} 
    {\raisebox{-0.2ex}{\scalebox{0.8}[1.7]{$\triangleleft$}}}
    {\raisebox{-0.1ex}{\scalebox{0.4}[1.2]{$\triangleleft$}}}
    {\raisebox{-0.1ex}{\scalebox{0.4}[1]{$\triangleleft$}}}  }
\newcommand{\sqr}{\mathchoice
    {\raisebox{-0.2ex}{\scalebox{0.8}[1.7]{$\triangleright$}}} 
    {\raisebox{-0.2ex}{\scalebox{0.8}[1.7]{$\triangleright$}}}
    {\raisebox{-0.1ex}{\scalebox{0.4}[1.2]{$\triangleright$}}}
    {\raisebox{-0.1ex}{\scalebox{0.4}[1]{$\triangleright$}}}  }
\newcommand{\SN}{\mathcal{M}_{\uparrow}}
\newcommand{\NS}{\mathcal{M}_{\downarrow}}
\newcommand{\Hsq}{H_{\sq}}

\newcommand{\A}{\mathfrak{a}}
\newcommand{\Ba}{\mathcal{B}}

\newcommand{\II}{I{\mkern-2mu}I}

\newtheorem{remark}{Remark}

\newtheorem{theorem}{Theorem}
\newtheorem*{theorem*}{Theorem}
\newtheorem{manualtheoreminner}{Theorem}
\newenvironment{manualtheorem}[1]{%
  \renewcommand\themanualtheoreminner{#1}%
  \manualtheoreminner
}{\endmanualtheoreminner}
\newtheorem{lemma}{Lemma}[section]
\newtheorem{proposition}{Proposition}

\title{On the cost of reaching an attainable state \\ for the boundary-controlled $1$-D heat equation}
\author{Olivier Glass}
\begin{document}
\maketitle
\begin{abstract}
We consider the heat equation in an interval of the real line, with null initial condition,
and Dirichlet boundary controls. We study the cost of reaching an attainable state at some time $T>0$.
Precisely, given a state in the interval that we know is reachable by suitable boundary controls,
we estimate the size of the smallest controls that reach it, in terms of an appropriate norm of this state.
Our estimate is optimal in some sense as the time horizon goes to $0$.
\end{abstract} 
\ \par
\ \par
{\small
\noindent \textbf{Keywords:} heat equation; boundary control; reachable space; cost of fast controls; Bergman space; separation of singularities. \par
\smallskip
\noindent \textbf{MSC2020:} 93B05, 93C20, 93B03, 35K05, 30H20.
} \par
\ \par
%
%
%
%
%
\section{Introduction}
In this paper, we consider the classical problem of describing the reachable space 
for the boundary controlled heat equation in one space dimension.
The problem is as follows. Let $L>0$, $T>0$.
Consider the heat equation in the domain $[0,T] \times [-L,L]$, with boundary controls 
at $x=-L$ and $x=L$ and null initial condition:
\begin{alignat}{2} 
\label{Eq:Heat}
&\partial_t v - \partial^2_{xx} v = 0 \ &&\text{ in } \ (0,T) \times (-L,L), \\
\label{Eq:Heat-IC}
&v(0,x) = 0 \ &&\text{ for } \ x \in (-L,L), \\
\label{Eq:Heat-BC}
&v(t,-L) = g_-(t) , \ \  v(t,L) = g_+(t) 
\ \ \ && \text{ for } \ t \in (0,T).
\end{alignat}
Above, $g_-$ and $g_+$ are the boundary controls taken for instance in $L^2(0,T)$,
with values in $\C$, which simplifies the presentation.
The classical problem of reachability for system 
\eqref{Eq:Heat}--\eqref{Eq:Heat-BC}
refers to the description of the reachable space
\begin{equation} \label{Eq:ReachableSpace}
\mathcal{R}_T :=\left\{ v(T,\cdot), \ \ g_- , \ g_+ \in L^2(0,T) \right\}.
\end{equation}

The complete description of this space is now fully understood due to the 
works of Orsoni \cite{Orsoni}, Hartmann-Orsoni \cite{Hartmann-Orsoni} and 
Kellay-Normand-Tucsnak \cite{Kellay-Normand-Tucsnak}, following a work by 
Hartmann-Kellay-Tucsnak \cite{hartmann2020reachable}. 
It turns out that the space $\mathcal{R}_T$ can be described as restrictions to 
$(-L,L)$ of functions belonging to the Bergman space of the following square in the complex space:
\begin{equation*}
\sq = \left\{ z \in \C \ \Big/ \ |\Re(z)| + |\Im(z)| <L \right\}.
\end{equation*}
The Bergman space $L^2_a(\sq)$ is the space of all functions in $L^2(\sq)$ 
(for the area measure in $\C$), which are holomorphic in $\sq$. 
With this notation, the above references prove that
\begin{equation} \label{Eq:Tucsnak_et_al}
\mathcal{R}_T = L^2_a(\sq)_{|(-L,L)}.
\end{equation}
Note that this space does not depend on $T>0$, which corresponds to a classical result 
for systems which are null controllable for any time, 
see Fattorini \cite{fattorini1978reachable} and Seidman \cite{seidman1979time}. 
An elementary proof of this fact can also be found in 
\cite[Proposition 3.4]{Kellay-Normand-Tucsnak}. 
In addition, it can be noticed that the reachable states starting 
from some initial condition $u_0 \in L^2(-L,L)$ rather than $0$ in \eqref{Eq:Heat-IC} 
would be the same, due to the null-controllability of the system in any time, 
see Fattorini-Russell \cite{Fattorini-Russell}. \par
Now based on \eqref{Eq:Tucsnak_et_al}, it is then a direct consequence 
of the open mapping theorem that for each $T>0$, 
there exists a constant $C_T>0$ such that for any $v \in L^2_a(\sq)$, 
there exist controls $g_-$ and $g_+$ in $L^2(0,T)$ such that the corresponding solution 
of \eqref{Eq:Heat}--\eqref{Eq:Heat-BC} reaches $v_{|(-L,L)}$ at time $T>0$ 
and moreover satisfy
\begin{equation} \nonumber 
\|g_-\|_{L^2(0,T)} + \|g_+\|_{L^2(0,T)} \leq C_T \|v \|_{L^2(\sq)}.
\end{equation}

\ \par
The goal of this paper is to study this constant $C_T$, 
in particular as $T \rightarrow 0^+$.
To state our result, we introduce the following constant of
Dardé-Ervedoza \cite{Darde-Ervedoza-2019} and Lissy \cite{lissy2026optimal}:
\begin{equation} \label{Eq:Kappa*}
\kappa_\star = \frac{\Gamma\left( \frac{1}{4} \right)^4}{8\pi^3} \simeq 0.6966.
\end{equation}
We prove the following result.
\begin{theorem} \label{Thm:Main}
Let $L>0$ be fixed. There exists $C>0$ such that 
the following holds for all $T \in (0,1]$.
Given $v_T$ in $L_a^2(\sq)$,
there exist $g_-$ and $g_+$ in $L^2(0,T)$ such that the corresponding solution $v$
of \eqref{Eq:Heat}--\eqref{Eq:Heat-BC} satisfies 
\begin{equation} \label{Eq:Goal}
v(T,\cdot) = v_T(\cdot) \ \text{ in } \ (-L,L),
\end{equation}
and which are furthermore estimated by
\begin{equation} \label{Eq:Est-Controls}
\|g_-\|_{L^2(0,T)} + \|g_+\|_{L^2(0,T)} 
 \leq \frac{C}{T^{2}} \exp\left(\frac{\kappa_\star {L}^2}{T}\right) \|v_T\|_{L^2(\sq)}.
\end{equation}
\end{theorem}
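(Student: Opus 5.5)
The plan is to combine a decomposition of Bergman functions on $\sq$ into pieces with controlled singularities with an explicit construction of controls. The constant $\kappa_\star$ is the optimal cost of null-controllability in small time, so it must enter through a null-control step.

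\textbf{Step 1: separation of singularities.} The square $\sq$ is the intersection of two sectors-type domains. One is the region $\SN$ bounded by the two sides through the vertex $iL$ and $-iL$ direction, i.e. the "vertical" diamond-halves. The other is the region bounded by the sides adjacent to the real vertices $\pm L$. I would instead use the decomposition of \cite{Hartmann-Orsoni,Kellay-Normand-Tucsnak}: write
\[
v_T = v_- + v_+ ,
\]
where $v_\pm$ is holomorphic and square integrable on the infinite sector (angle $\pi/2$) with vertex $\pm L$ containing $\sq$. The estimate $\|v_-\|+\|v_+\|\le C\|v_T\|_{L^2(\sq)}$ holds with $C$ depending only on $L$. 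This goes through a Cauchy-integral splitting of $\partial\sq$ together with the boundedness of the associated Bergman-type projections, which I would take from the cited works.

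\textbf{Step 2: reaching sector-Bergman functions from one side.} For $v_+$, holomorphic in the sector $\{|\arg(L-z)|<\pi/4\}$, represent it through a Laplace/heat-kernel transform in the variable $L-x$. Functions in the Bergman space of this sector are exactly (up to norm equivalence) the values at time $T$ of solutions of the heat equation on the half-line $(-\infty,L)$ with boundary data $h_+$ at $x=L$. The explicit formula is $v(T,x)=\int_0^T \partial_x\Phi(T-s,L-x)h_+(s)\,ds$ with the Gaussian kernel. I would construct $h_+$ by inverting this map via a Mellin-type or Fourier analysis on the sector. I would also track the $T$-dependence by scaling $x\mapsto x/\sqrt T$: scaling costs polynomial factors, which I expect to give the $T^{-2}$. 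Do the same for $v_-$ on $(-L,\infty)$.

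\textbf{Step 3: correcting to the interval.} Restricting the half-line solutions to $(-L,L)$ produces a solution $w$ of \eqref{Eq:Heat}. Its value at time $T$ is $v_-+v_+=v_T$ on $(-L,L)$, but its boundary traces are not the given ones. Specifically, $w(t,L)=h_+(t)+(\text{trace of the }v_-\text{ piece at }L)$, and similarly at $-L$. Those traces are simply the boundary controls $g_\pm$; they are in $L^2(0,T)$ because the half-line solution generated at $-L$ is smooth and Gaussian-small near $x=L$. The delicate point is to control the norm of these traces by $\|v_\pm\|$ uniformly in $T$. That is where the exponential enters: bounding the representation of $v_-$ at distance $2L$ from its vertex, or alternatively replacing the naive construction by one using a null control on the interval, yields the factor $\exp(\kappa_\star L^2/T)$. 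I would realize the latter through the optimal small-time null-controllability cost for the interval, \cite{Darde-Ervedoza-2019,lissy2026optimal}. Concretely, extend by a time-shifted construction: take the half-line solutions started before time $0$ so that at $t=0$ they have nonzero data. Then kill that initial datum on $(-L,L)$ with a null control costing $C e^{\kappa_\star L^2/T}$ times its norm, and superpose.

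\textbf{Main obstacle.} The hardest step is Step 2/3 with sharp constants: inverting the half-line map with only polynomial loss in $T$, and making sure the residual initial datum lies in $L^2(-L,L)$ with norm polynomially bounded by $\|v_T\|$. That way the only exponential loss is the null-control cost. I would first try to handle the inversion exactly via the Mellin transform on the sector, where the heat operator from the vertex diagonalizes.
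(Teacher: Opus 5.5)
There is a genuine gap, and it is in Step 2. The half-line problem on $(-\infty,L)$, controlled at $x=L$ from zero data, does not reach the whole unweighted Bergman space of the sector $D_2$. Its reachable states carry a Gaussian weight:
\begin{itemize}
\item By the Aikawa--Hayashi--Saitoh theorem, every function of $L^2_a(D_2;e^{2\Re\varphi_2})$, with $\varphi_2(z)=(L-z)^2/(4T)$, is reached at time $T$ by a control whose $L^2$ norm is at most its weighted norm.
\item Conversely, view an $L^2(0,T)$ control as a control on a longer interval. Then every state reached at time $T$ lies in $L^2_a(D_2;e^{\Re((L-z)^2)/(2T')})$ for any $T'>T$.
\end{itemize}
The same holds on $D_1$ with $\varphi_1(z)=(L+z)^2/(4T)$. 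Since $\Re((L-z)^2)\ge 0$ on $D_2$, with Gaussian growth inside the sector, these are proper subspaces of $L^2_a(D_2)$. For instance, $(L+1-z)^{-2}\in L^2_a(D_2)$ is reached by no half-line control at any time. A half-line state is also determined on all of $D_2$ by its values on $(-L,L)$. So the pieces $v_\pm$ from the unweighted Hartmann--Orsoni splitting are in general not reachable, whatever inversion you use. Rescaling about the vertex maps the weighted space at time $T$ onto the one at time $1$; it does not remove the weight. Starting the half-line problems at an earlier time $-S$ only replaces $T$ by $T+S$. Finally, the exponential does not come from the cross traces $v_1(\cdot,L)$ and $v_2(\cdot,-L)$: an $L^1\times L^2$ convolution estimate bounds them by the half-line controls with a $T$-uniform constant.

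The missing idea is a \emph{weighted} separation of singularities with a quantified cost. One must write $v_T=\tilde u_1+\tilde u_2$ on $\sq$ with $\|\tilde u_k\|_{L^2_a(D_k;e^{2\Re\varphi_k})}\lesssim T^{-2}e^{\kappa_\star L^2/T}\|v_T\|_{\Hsq}$. Aikawa--Hayashi--Saitoh and superposition then finish the proof, and this decomposition is exactly where both $e^{\kappa_\star L^2/T}$ and $T^{-2}$ come from. The paper obtains it by duality, proving $\|v\|_{\Hsq}\le C_1T^{-2}e^{\kappa_\star L^2/T}\|\mathcal{T}^*v\|$ for $\mathcal{T}(u_1,u_2)=e^{-\varphi_1}u_1+e^{-\varphi_2}u_2$:
\begin{itemize}
\item Spencer's formula for the Bergman projectors of $D_k$ yields a function $\A$ on $\Omega$ whose $\partial$-derivative is expressed through $\mathcal{T}^*v$.
\item The Dard\'e--Ervedoza multiplier $\psi$, built from $\tilde\varphi$ with $\Delta\tilde\varphi=-2\delta_{\mathcal{M}}$, makes $\overline{\psi}\A$ bounded in $H^1_0(\Omega)$ by $\|\mathcal{T}^*v\|$.
\item $\|v\|^2_{\Hsq}$ is split along the middle segment $\mathcal{M}$, on which $|e^{\lambda}\overline{\psi}^{-1}|\le e^{\kappa_\star L^2/T}$ because $\kappa_\star=\frac14+\frac{\tilde\varphi(0)}{2}$.
\item The two factors $1/T$ come from the $H^{-1/2}$ trace of $e^{-\lambda}v$ and from the $C^{2/3}$ seminorm of the weight on $\mathcal{M}$, not from scaling.
\end{itemize}

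Your null-control fallback cannot replace this. A correct version of it works on the interval rather than on half-lines. One reaches $v_T$ at a fixed time $T_0$ through the known characterization of $\mathcal{R}_{T_0}$, then null-controls the intermediate state, which in general lies only in $H^{-1}(-L,L)$, not in $L^2$. But this needs as input an upper bound of the exact form $T^{-k}e^{\kappa_\star L^2/T}$ on the null-control cost. Lissy's result is a lower bound, and Dard\'e--Ervedoza only give $\kappa_+\le\kappa_\star$. So this route yields the statement only with $\kappa_\star$ replaced by any $\kappa>\kappa_\star$. The sharp upper bound is Theorem~\ref{Thm:DEbis}, which the paper deduces from Theorem~\ref{Thm:1prime}, so invoking it here would be circular.
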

Actually, we will prove a slightly stronger result. 
We introduce indeed the weight function $\lambda$ as follows:
\begin{equation} \label{Def:lambda}
\lambda(z) = \frac{\Im(z)^2}{4T},
\end{equation}
and consider the following weighted Bergman space where the area measure
is multiplied by $e^{-2\lambda}$:
\begin{equation} \label{De:H_C}
\Hsq := L^2_a(\sq; \, e^{-2\lambda}).
\end{equation}
Obviously, for fixed $T>0$, the vector spaces $\Hsq$ and $L^2_a(\sq)$ coincide, 
and the norms are equivalent, with moreover 
$\|\cdot\|_{\Hsq} \leq \|\cdot\|_{L^2_a(\sq)}$.
But the other inequality is not uniform in $T$, so that 
the following statement is a bit stronger.

\begin{manualtheorem}{1'} \label{Thm:1prime}
Let $L>0$ be fixed. There exists $C>0$ such that 
the following holds for all $T \in (0,1]$.
Given $v_T$ in $L_a^2(\sq)$,
there exist $g_-$ and $g_+$ in $L^2(0,T)$ such that the corresponding solution $v$
of \eqref{Eq:Heat}--\eqref{Eq:Heat-BC} satisfies \eqref{Eq:Goal}
and which are furthermore estimated by
\begin{equation} \label{Eq:Est-Controls2}
\|g_-\|_{L^2(0,T)} + \|g_+\|_{L^2(0,T)} 
 \leq \frac{C}{T^{2}} \exp\left(\frac{\kappa_\star {L}^2}{T}\right) \|v_T\|_{\Hsq}.
\end{equation}
\end{manualtheorem}
Theorem~\ref{Thm:1prime} allows us to give an alternative proof of the
aforementioned result by Dardé and Ervedoza \cite{Darde-Ervedoza-2019}
giving an upper estimate on the cost of the null-controllability of the heat
equation, with the tiny improvement that the constant in the exponential is 
actually attained. Note that this constant was recently proved to be optimal
by Lissy \cite{lissy2026optimal}.
\begin{theorem} \label{Thm:DEbis}
Let $L>0$ be fixed. There exists $C>0$ such that 
the following holds for all $T \in (0,1]$.
Given $v_0$ in $L^2(-L,L)$,
there exist $g_-$ and $g_+$ in $L^2(0,T)$ estimated by
\begin{equation} \label{Eq:Est-Controls-NC}
\|g_-\|_{L^2(0,T)} + \|g_+\|_{L^2(0,T)} 
 \leq \frac{C}{T^{2}} \exp\left(\frac{\kappa_\star {L}^2}{T}\right) \|v_0\|_{L^2(-L,L)},
\end{equation}
such that the corresponding solution $v$
of \eqref{Eq:Heat}, \eqref{Eq:Heat-BC} and initial condition
\begin{equation} \label{Eq:Heat-IC-NH}
v(0,x) = v_0(x) \ \text{ for } \ x \in (-L,L),
\end{equation}
satisfies 
\begin{equation} \label{Eq:Goal-NC}
v(T,\cdot) = 0 \ \text{ in } \ (-L,L).
\end{equation}
\end{theorem}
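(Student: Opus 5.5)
The plan is to deduce Theorem~\ref{Thm:DEbis} from Theorem~\ref{Thm:1prime} by linearity. Given $v_0 \in L^2(-L,L)$, let $w$ be the solution of the heat equation with initial datum $v_0$ and homogeneous Dirichlet conditions. If we can reach $w(T,\cdot)$ from $0$ with controls $g_\pm$, then $v := w - \tilde v$ is the desired solution, where $\tilde v$ is the solution starting from $0$ driven by $g_\pm$. Indeed, $v$ starts from $v_0$, has boundary data $-g_\pm$, and vanishes at time $T$. Replacing $g_\pm$ by $-g_\pm$ does not change the norms. Everything therefore reduces to showing that $w(T,\cdot)$ extends holomorphically to $\sq$ with
\begin{equation*}
\|w(T,\cdot)\|_{\Hsq} \leq C \|v_0\|_{L^2(-L,L)},
\end{equation*}
with $C$ independent of $T\in(0,1]$, or at worst losing a power of $T$ that can be absorbed. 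We then apply \eqref{Eq:Est-Controls2}.

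To get this bound, expand in the Dirichlet eigenbasis $\varphi_k(x) = \sin(k\pi(x+L)/(2L))/\sqrt{L}$, with eigenvalues $\mu_k = (k\pi/(2L))^2$. This gives $w(T,z) = \sum_k a_k e^{-\mu_k T}\varphi_k(z)$ with $\sum|a_k|^2 = \|v_0\|^2$. Each $\varphi_k$ is entire, and for $z=x+iy$ we have $|\varphi_k(z)| \leq L^{-1/2} e^{\sqrt{\mu_k}|y|}$. By Cauchy--Schwarz,
\begin{equation*}
|w(T,z)|^2 e^{-y^2/(2T)} \leq \frac{\|v_0\|^2}{L}\sum_k \exp\Bigl(-2\mu_k T + 2\sqrt{\mu_k}|y| - \frac{y^2}{2T}\Bigr).
\end{equation*}
The exponent equals $-2T\bigl(\sqrt{\mu_k} - \tfrac{|y|}{2T}\bigr)^2$, which is $\leq 0$. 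Hence the weight $e^{-2\lambda}=e^{-y^2/(2T)}$ exactly compensates the growth, and this is the point where the choice of $\lambda$ in \eqref{Def:lambda} is designed to work.

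The main obstacle is controlling the sum $\sum_k \exp(-2T(\sqrt{\mu_k}-|y|/(2T))^2)$ uniformly. It is a Gaussian sum in $k$ with spacing $\pi/(2L)$ in the variable $\sqrt{\mu_k}$ and width of order $T^{-1/2}$, so it is bounded by $C(1+T^{-1/2})$. Integrating over the bounded square $\sq$ then gives $\|w(T,\cdot)\|_{\Hsq}^2 \leq C T^{-1/2}\|v_0\|^2$. This loses a factor $T^{-1/4}$ compared with the target \eqref{Eq:Est-Controls-NC}.

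To remove that loss, I would refine the argument by avoiding Cauchy--Schwarz pointwise and instead computing the $L^2$ norm in $y$ directly. The $y$-integral over $|y|<L$ of $|\sum_k a_k e^{-\mu_kT}\varphi_k|^2 e^{-y^2/(2T)}$ is an almost-orthogonal quadratic form in the $a_k$. Its Gram matrix entries decay like Gaussians in $\sqrt{\mu_k}-\sqrt{\mu_j}$ at scale $T^{-1/2}$, and each diagonal entry carries an extra factor of order $T^{1/2}$ from the Gaussian $y$-integral. Schur's test should then give a uniform bound. If this refinement fails, an alternative is to split the time interval: first let the solution evolve freely on $[0,T/2]$, then use \eqref{Eq:Est-Controls2} on $[T/2,T]$, with $L^2$-decay absorbing polynomial factors.
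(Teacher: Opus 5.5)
Your reduction is correct, and it differs from the paper's in where the work goes. The paper evolves the zero extension $E[v_0]$ with the whole-line heat semigroup. There the bound $\|e^{T\partial^2_{xx}}E[v_0]\|_{\Hsq}\lesssim\|v_0\|_{L^2(-L,L)}$ is a one-line Young inequality: after multiplication by $e^{-\lambda}$, the complex Gaussian kernel has modulus equal to the real heat kernel in the horizontal variable, so every horizontal slice is bounded in $L^2$ by $\|v_0\|$. The price is that the free solution has nonzero traces at $x=\pm L$. These must be added to the controls, and the paper estimates them through the energy bound in $L^2(0,T;H^1(\R))$. With the Dirichlet solution $w$ you need no trace estimate, since the controls are just $-g_\pm$, but the holomorphic bound becomes a spectral computation. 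The mechanism is the same in both routes: your completion of the square $-\theta^2T+\theta|y|-y^2/(4T)=-T\bigl(\theta-|y|/(2T)\bigr)^2\le 0$ is the Fourier-side form of the paper's kernel identity.

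What is not yet a proof is the decisive estimate $\|w(T,\cdot)\|_{\Hsq}\lesssim\|v_0\|$, and your sketch of it needs one correction.

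First, a loss of $T^{-1/4}$ cannot be absorbed. The statement has exactly the prefactor $T^{-2}$ in front of $\exp(\kappa_\star L^2/T)$, so the uniform bound is genuinely needed.

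Second, for Schur's test it does not suffice that the \emph{diagonal} Gram entries be $O(T^{1/2})$. A row contains about $LT^{-1/2}$ non-negligible entries, so with $O(1)$ off-diagonal entries you are back to the $T^{-1/4}$ loss. What is true, and closes the argument, is a bound on all entries. Write $G_{kj}=e^{-(\mu_k+\mu_j)T}\int_{\sq}e^{-2\lambda}\varphi_k\overline{\varphi_j}\,dA$ and $\theta_k=k\pi/(2L)$. From $|\varphi_k(x+iy)|\le L^{-1/2}e^{\theta_k|y|}$ you get $|G_{kj}|\le 2e^{-(\theta_k^2+\theta_j^2)T}\int_{\R}e^{-y^2/(2T)+(\theta_k+\theta_j)|y|}\,dy\le 4\sqrt{2\pi T}\,e^{-(\theta_k-\theta_j)^2T/2}$. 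Hence every row sum is $\lesssim\sqrt{T}\,(1+LT^{-1/2})\lesssim 1$.

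Alternatively, use the method of images. One has $w(T,\cdot)=e^{T\partial^2_{xx}}\tilde v_0$, where $\tilde v_0$ is the extension of $v_0$ that is odd with respect to $x=\pm L$, hence $4L$-periodic. The slice argument of Lemma~\ref{Lem:Est-Free-Solution}, with Young's inequality on the circle of length $4L$, then gives $\|w(T,\cdot)\|_{\Hsq}\le 2\sqrt{L}\,\|v_0\|_{L^2(-L,L)}$ directly.

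Finally, discard your fallback. Applying Theorem~\ref{Thm:1prime} on an interval of length $T/2$ replaces $\kappa_\star L^2/T$ by $2\kappa_\star L^2/T$. Meanwhile the $L^2$ decay of the Dirichlet semigroup over $[0,T/2]$ is only $e^{-\pi^2T/(8L^2)}$, which absorbs nothing.
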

\begin{remark}
Notice that in all of the above results, we may also consider the case $T>1$
by replacing the $T$-dependent factor with $1$, 
meaning that the size of the control can be made bounded for $T> 1$.
We may indeed put the control to $0$ in the time interval 
$[0,T-1/2]$ and use the remaining interval for the control 
(also using the decay of the $L^2$ norm for the uncontrolled heat
equation when considering Theorem~\ref{Thm:DEbis}).
\end{remark}
We now give a brief review of some previous papers connected to this study. 
This paper is at the junction of two problems in PDE control theory
that have been broadly studied since the 1970s, namely the cost for small times
of the null-controllability of the heat equation, and the determination of its
reachable states by means of boundary control. \par
The study of the reachable space for the heat equation goes
back to Fattorini \cite{fattorini1978reachable} 
and Seidman \cite{seidman1979time}. 
A more accurate description was later given by Ervedoza \& Zuazua \cite{Ervedoza-Zuazua-2011}. 
More recently, several papers gave a description (or at least, inclusions) of this space 
relying on holomorphic extensions of the state, starting with 
the work of Martin, Rosier \& Rouchon \cite{MRR-2016}, and then with more 
and more precision in articles by Dardé \& Ervedoza \cite{Darde-Ervedoza-2018}, 
Hartmann, Kellay \& Tucsnak \cite{hartmann2020reachable}, 
Kellay, Normand \& Tucsnak \cite{Kellay-Normand-Tucsnak} and Orsoni $\cite{Orsoni}$. 
In particular, the last two references describe {\it as an equality} 
the reachable space as the sum of two Bergman spaces on sectors (see the domains
$D_1$ and $D_2$ below), with the slight difference that these spaces are weighted 
in the case \cite{Kellay-Normand-Tucsnak} (which is natural as we will see).
Finally, Hartmann \& Orsoni \cite{Hartmann-Orsoni} gave a description of 
this sum of (unweighted) Bergman spaces, 
confirming a conjecture of \cite{hartmann2020reachable} 
and yielding the simple and elegant form \eqref{Eq:Tucsnak_et_al}. \par
We now discuss the cost for small times of the null-controllability.
That the heat equation is null-controllable by means of boundary controls 
for any positive time is a classical result due to Fattorini \& Russell
\cite{Fattorini-Russell}. The problem of determining the cost of this 
null-controllability, that is to estimate the size of the boundary controls
(for instance in $L^2(0,T)$) in terms of the size of the initial condition 
(for instance $\|u_0\|_{L^2}$) was first investigated 
by Seidman \cite{Seidman-1984} (upper estimate) and G\"uichal \cite{Guichal-1985}
(lower estimate). Their results prove in particular that the best 
null-controllability constant $C_H(T,L)$, that is, the lowest value $K$ for which
given $u_0 \in L^2(-L,L)$, there exist $g_-$ and $g_+$ driving $u_0$ to $0$ 
in time $T$, satisfying 
$\|g_{-}\|_{L^2(0,T)} + \|g_{+}\|_{L^2(0,T)} \leq K \|u_0\|_{L^2(-L,L)}$, 
follows an exponential behavior of the type $\exp\left( \mathcal{O}(1) \frac{L^2}{T} \right)$:
\begin{equation*}
\kappa_- := \liminf_{T \rightarrow 0^+} \frac{T}{L^2} \log C_H(T,L) >0
\text{ and }
\kappa_+ := \limsup_{T \rightarrow 0^+} \frac{T}{L^2} \log C_H(T,L) <+\infty.
\end{equation*}
Many studies followed to improve the estimates of the constants $\kappa_-$ and $\kappa_{+}$ above.
In particular, Miller \cite{Miller-Cost-2006} gave an estimate 
$\kappa_+ \leq 2 \left( \frac{36}{37} \right)^2$, that was later improved 
by Tenenbaum \& Tucsnak \cite{Tenenbaum-Tucsnak} as $\kappa_+ \leq \frac{3}{4}$.
Then the best upper estimate $\kappa_+ \leq \kappa_\star$ was obtained 
by Dardé \& Ervedoza \cite{Darde-Ervedoza-2019}. 
Concerning lower estimates, G\"uichal's result was improved by 
Miller \cite{Miller04} who proved $\kappa_- \geq \frac{1}{4}$, and then by
Lissy \cite{Lissy-2015} ($\kappa_- \geq \frac{1}{2}$). This culminated with 
the recent work by Lissy \cite{lissy2026optimal} who proved 
$\kappa_- \geq \kappa_\star$ and consequently $\kappa_-=\kappa_+=\kappa_\star$. \par
The null-controllability amounts to saying that $\mathcal{S}_T:=\{S_T[u_0], \ u_0 \in L^2(-L,L)\} \subset \mathcal{R}_T$, where $(S_t)_{t \geq 0}$  is the heat semigroup associated with \eqref{Eq:Heat} and homogeneous Dirichlet boundary 
conditions. 
However, $\mathcal{S}_T$ is much smaller than $\mathcal{R}_T$ (for instance, it is composed of entire functions).
We refer in particular to Kellay, Normand and Tucsnak \cite[Section 6]{Kellay-Normand-Tucsnak} for a discussion
on the connections between the reachable space and the cost of null-controllability.
The above references on the cost of the null-controllability leave therefore 
open the question of the cost of reaching an attainable state.
The purpose of this paper is to fill this gap.  \par
We finish this introduction by noticing that, as in Theorem~\ref{Thm:DEbis}, the constant $\kappa_\star$ inside the exponential in Theorems~\ref{Thm:Main} 
and \ref{Thm:1prime} is optimal as a consequence of Lissy's work 
\cite{lissy2026optimal}. Indeed, he shows that driving the first eigenmode of the Laplacian $e_1:x \mapsto \cos\left( \frac{\pi x}{2L} \right)$ to $0$ has a cost at least
$\mathcal{O}(1) \exp\left( \frac{(\kappa_\star-\varepsilon)L^2}{T} \right)$ for any $\varepsilon>0$. So the cost of driving $0$ to $\exp(-\frac{\pi^2}{4L^2}T) e_1(x)$ is also of the same order.
%
%
%
%
%
%
%
%
%
%
%
%
\section{Proof of the main result}
%
%
%
%
%
%
\subsection{Definitions and notations}
%
%
%
%
\subsubsection{General notations}
Given an open set $\mathcal{O} \subset \C$, we will denote by $\mathscr{H}(\mathcal{O})$ 
the space of holomorphic functions on $\mathcal{O}$.
Given a positive continuous weight function $\mu:\mathcal{O} \rightarrow \R^{+*}$,
we will also define the weighted Bergman space
\begin{equation*}
L^2_a(\mathcal{O}; \, \mu) := L^2(\mathcal{O}; \, \mu(z) \,dA(z)) \cap \mathscr{H}(\mathcal{O}),
\end{equation*}
and simply write $L^2_a(\mathcal{O})$ for $L^2_a(\mathcal{O};\, dA(z))$.
Here $dA$ designates the area measure on $\C$. \par
The space $L^2_a(\mathcal{O};\mu)$ is closed in $L^2(\mathcal{O};\mu)$ 
and hence itself a Hilbert space.
This is very classical and due to the fact that for holomorphic functions 
the $L^2$ convergence implies the local convergence in $C^k$ spaces 
on smaller sets. \par
We will use the standard notations for Wirtinger derivatives:
for $f: \mathcal{O} \rightarrow \C$,
\begin{equation*}
\partial f := \frac{1}{2} \left( \frac{\partial}{\partial x} f 
    - i \frac{\partial}{\partial y} f\right)
\ \text{ and } \ 
\overline{\partial} f := \frac{1}{2} \left( \frac{\partial}{\partial x} f 
    + i \frac{\partial}{\partial y} f\right).
\end{equation*}
We recall that
\begin{equation} \label{Eq:Laplacian}
\partial \overline{\partial} = \frac{1}{4} \Delta,
\end{equation}
and the Cauchy-Green formulas (cf. e.g. \cite[p. 59]{MR507701}):
\begin{eqnarray} \label{Eq:Cauchy-Green}
\int_{\omega} f \, (\overline{\partial} g) = -\int_{\omega} (\overline{\partial}f) \,  g 
    + \frac{1}{2i} \int_{\partial \omega} fg\, dz, \\
\label{Eq:Cauchy-Green2}
\int_{\omega} f \, (\partial g)  = -\int_{\omega} (\partial f) \,  g 
    - \frac{1}{2i} \int_{\partial \omega} fg\, d\overline{z}.
\end{eqnarray}
Note that the Green formula is valid in Lipschitz domains, 
see \cite[Theorem 1.5.3.1]{grisvard1985elliptic}. \par
\ \par
\noindent
{\bf Convention on $\lesssim$.}
As a final notation, we will write inequalities $a \lesssim b$ to express the inequality
$a \leq C\, b$, up to a uniform constant $C>0$ which does not depend on the function 
or the point at stake, {\bf nor on $\mathbf{T}$}. 
It may however depend on $L$ which is considered fixed from the beginning. \par
%
%
%
%
%
%
%
%
\subsubsection{Complex domains}
\label{Subsubsec:Domains}
Let $L>0$.
We define $D_1, D_2 \subset \C$ as follows (see Figure~\ref{Fig:Domaines}):
\begin{gather}
\nonumber 
D_1 := \left\{ z \in \C \ \Big/ \ \Re(z)+L > |\Im(z)| \right\}, \\
\nonumber 
D_2 := \left\{ z \in \C \ \Big/ \ L-\Re(z) > |\Im(z)| \right\}, \\
\label{Def:Omega_and_square}
\Omega = D_1 \cup D_2 \ \text{ and } \ \sq = D_1 \cap D_2.
\end{gather}
\begin{figure}[ht]
\begin{center}
\hspace*{2.5cm}
\begin{tikzpicture}
    \begin{scope}
        \clip (-2,-3) -- (1,0) -- (-2,3);
        \fill[black!5] (-2,-3) rectangle (2,3);
    \end{scope}
    \begin{scope}
        \clip (2,-3) -- (-1,0) -- (2,3);
        \fill[black!5] (-2,-3) rectangle (2,3);
    \end{scope}
    \draw[black, dotted] (0,-2.8) -- (0,3);
    \draw[black, dotted] (-3.5,0) -- (3.5,0);

    \fill[black!5] (4,2) rectangle (4.8,2.5);

    \draw[blue, thick] (1.5,-2.5) -- (-1,0) -- (1.5,2.5);
    \draw[blue!70!black, thick, dashed] (1.5,-2.5) -- (2,-3);
    \draw[blue!70!black, thick, dashed] (1.5,2.5) -- (2,3);

    \draw[green!70!black, thick] (-1.5,-2.5) -- (1,0) -- (-1.5,2.5);;
    \draw[green!70!black, thick, dashed] (-1.5,-2.5) -- (-2,-3);
    \draw[green!70!black, thick, dashed] (-1.5,2.5) -- (-2,3);

    \node[blue] at (1.7,0.5) {$D_1$};
    \node[green!70!black] at (-1.7,0.5) {$D_2$};
    \node[right] (label) at (0.05,2.15) {$\sq$};
    \draw[->,>=stealth,shorten >=2pt] (0.1,2) to[bend right=30] (0.2,0.2);
    \node[black] at (5.2,2.25) {$\Omega$};
    \node[black] at (0,-3) {$\scriptscriptstyle i\R$};
    \node[black] at (3.7,0.02) {$\scriptscriptstyle \R$};
\end{tikzpicture}
\end{center}
\caption{Complex domains}
\label{Fig:Domaines}
\end{figure}
\ \par
The following subdomains will also be helpful:
\begin{equation} \label{Eq:Subdomains}
\sql := \sq \cap \{z \in \C, \ \Re(z) <0\}
\ \text{ and } \ 
\sqr := \sq \cap \{z \in \C, \ \Re(z) > 0\}.
\end{equation}
Finally, we will call $\NS$ (respectively $\SN$) the directed straight line from $iL$ to $-iL$
(resp. from $-iL$ to $iL$). We call it $\mathcal{M}$ when the direction has no importance. \par
%
%
%
%
%
%
%
\subsubsection{Weights and spaces}
For a given $T>0$, we first define the functions $\varphi_1$ and $\varphi_2$
on $\Omega$ as follows:
\begin{equation} \label{Def:Phi-i}
\varphi_{1}(z) = \frac{ (L+z)^2 }{4T}
\ \text{ and } \ 
\varphi_{2}(z) = \frac{ (L-z)^2 }{4T}
    \ \text{ for } z \in \Omega.
\end{equation}
We note that $\varphi_1$ and $\varphi_2$ are holomorphic in $\Omega$. 
The functional spaces $L_a^2 \left(D_k; \, e^{2 \Re(\varphi_k)}\right)$ on $D_k$ 
are at the core of the discussion, though most of the time not used explicitly. 
%
\par
%
%
%
%
%
\subsubsection{Main operator}
For $k=1,2$, we introduce $B_k$ the Bergman projector in $L^2(D_k)$ 
(without weight), that is, the orthogonal projector in $L^2(D_k)$ onto $L_a^2(D_k)$. \par
The principal operator that we are studying in this paper is the following:
\begin{equation} \label{Eq:Principal-operator}
\mathcal{T} : \left\{ \begin{array}{l}
\displaystyle 
L^{2}_a \left( D_1 \right) 
    \times L^{2}_a\left( D_2 \right)
\longrightarrow
\Hsq , \bigskip \\
\displaystyle 
(u_1, u_2) \longmapsto e^{-\varphi_1}{u_1}_{|\sq} + e^{-\varphi_2}{u_2}_{|\sq}.
\end{array} \right. 
\end{equation}
The main goal is to prove that $\mathcal{T}$ is surjective and to estimate 
a right-inverse for $\mathcal{T}$.
%
%
%
%
%
%
%
%
\subsection{Main point of the proof}
\label{Subsec:Main-Lemmas}
In this section, we introduce the main statement in the proof of Theorem~\ref{Thm:Main}. \par
\ \par
First, we describe the adjoint operator of $\mathcal{T}$.
\begin{lemma} \label{Lem:Dual-T}
The adjoint operator of $\mathcal{T}$ defined in \eqref{Eq:Principal-operator}
is given by
\begin{equation} \label{Eq:Dual-T}
\forall v \in \Hsq, \ \
\mathcal{T}^*(v) = 
\left(
B_1 \circ E_1\left[ e^{-2 \lambda - \overline{\varphi_1}} v \right] , \ 
B_2 \circ E_2\left[ e^{-2 \lambda - \overline{\varphi_2}} v \right] 
\right),
\end{equation}
where for $k=1,2$, $E_k$ designates the operator 
$L^{2}(\sq) \rightarrow L^{2}(D_k)$ that extends a function defined in $\sq$
by $0$ in $D_k \setminus \sq$.
\end{lemma}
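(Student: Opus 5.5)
The plan is to compute the adjoint directly from the definition of the inner products, after first checking that $\mathcal{T}$ is well defined and bounded. The domain $L^2_a(D_1)\times L^2_a(D_2)$ carries the product of the unweighted Bergman inner products. The target $\Hsq$ carries
\[
\langle f,g\rangle_{\Hsq}=\int_{\sq} f\,\overline{g}\,e^{-2\lambda}\,dA .
\]

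\textbf{Boundedness.} On $\sq$ one has $\Re(L+z)\geq 0$ and $|\Re(L+z)|\leq 2L$. Hence
\[
\Re(\varphi_1)=\frac{\Re(L+z)^2-\Im(z)^2}{4T}\geq -\lambda ,
\]
so that $|e^{-\varphi_1}|^2e^{-2\lambda}=e^{-2\Re\varphi_1-2\lambda}\leq 1$ on $\sq$, and the same bound holds for $\varphi_2$. Therefore $\|e^{-\varphi_k}u_k\|_{\Hsq}\leq \|u_k\|_{L^2(D_k)}$. Since $e^{-\varphi_k}u_k$ is holomorphic on $\sq$, $\mathcal{T}$ maps into $\Hsq$ and is bounded, so its adjoint exists.

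\textbf{Identification of the adjoint.} Fix $v\in\Hsq$ and $(u_1,u_2)$ in the domain. Then
\[
\langle \mathcal{T}(u_1,u_2),v\rangle_{\Hsq}=\sum_{k=1,2}\int_{\sq}u_k\,\overline{e^{-\varphi_k}}\,\overline{v}\,e^{-2\lambda}\,dA
=\sum_{k=1,2}\int_{\sq}u_k\,\overline{e^{-\overline{\varphi_k}-2\lambda}v}\,dA ,
\]
using that $\lambda$ is real, so $\overline{e^{-\varphi_k}}e^{-2\lambda}=\overline{e^{-\overline{\varphi_k}-2\lambda}}$. Set $w_k:=e^{-2\lambda-\overline{\varphi_k}}v$. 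We have $|w_k|^2=e^{-2\lambda}\,|v|^2\,e^{-2\Re\varphi_k-2\lambda}\leq e^{-2\lambda}|v|^2$, so $w_k\in L^2(\sq)$ with $\|w_k\|_{L^2(\sq)}\leq\|v\|_{\Hsq}$. Extending by zero, each term rewrites as $\langle u_k,E_k w_k\rangle_{L^2(D_k)}$, and $E_k w_k\in L^2(D_k)$.

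Since $u_k\in L^2_a(D_k)$ and $B_k$ is the orthogonal projector onto $L^2_a(D_k)$, it is self-adjoint and satisfies $B_ku_k=u_k$. Hence
\[
\langle u_k,E_kw_k\rangle_{L^2(D_k)}=\langle B_ku_k,E_kw_k\rangle=\langle u_k,B_kE_kw_k\rangle .
\]
Summing over $k$ identifies $\mathcal{T}^*v$ as $(B_1E_1w_1,B_2E_2w_2)\in L^2_a(D_1)\times L^2_a(D_2)$, which is exactly \eqref{Eq:Dual-T}.

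The only point needing care is the weight bound $\Re\varphi_k\geq-\lambda$ on $\sq$. It ensures that the multiplier $e^{-2\lambda-\overline{\varphi_k}}$ maps $\Hsq$ into unweighted $L^2(\sq)$, so that $E_k$ and $B_k$ can be applied. Everything else is formal.
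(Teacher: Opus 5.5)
Your argument is correct and is essentially the paper's proof: you expand the $\Hsq$ inner product, move $e^{-2\lambda}$ and $e^{-\varphi_k}$ onto $v$ as $e^{-2\lambda-\overline{\varphi_k}}v$, extend by zero, and insert $B_k$ using $B_k u_k = u_k$ and self-adjointness; your bound $\Re(\varphi_k)+\lambda = (L\pm\Re(z))^2/(4T)\geq 0$ adds the boundedness of $\mathcal{T}$ and the fact that $e^{-2\lambda-\overline{\varphi_k}}v\in L^2(\sq)$, both of which the paper leaves implicit. One notational slip: in your middle integral and in the identity you use to justify the next step, the factor should be $e^{-\varphi_k}=\overline{e^{-\overline{\varphi_k}}}$ rather than $\overline{e^{-\varphi_k}}$ (as written, $\overline{e^{-\varphi_k}}e^{-2\lambda}=\overline{e^{-\overline{\varphi_k}-2\lambda}}$ is false); the two slips cancel, so your final expression, and hence the formula for $\mathcal{T}^*$, is right.
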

\begin{proof}[Proof of Lemma~\ref{Lem:Dual-T}]
Let 
$v \in \Hsq$ and
$(u_1,u_2) \in L^{2}_a \left(D_1\right) \times L^{2}_a\left(D_2\right)$.
Using the definition of $B_k$, that $u_k \in L^2_{a}(D_k)$, $k=1,2$,
and that $\lambda$ is real-valued, we have
\begin{align*}
\left\langle \mathcal{T}(u_1,u_2), v 
    \right\rangle_{\Hsq}
&= \sum_{k=1}^{2} 
\left\langle e^{- 2\lambda} \, e^{-\varphi_k} \, u_k  , \, v \right\rangle_{L^2(\sq)} \\
&= \sum_{k=1}^{2} 
    \left\langle u_k , \, 
     e^{- 2\lambda - \overline{\varphi_k}} \,  v \right\rangle_{L^2(\sq)} \\
&= \sum_{k=1}^{2} 
    \left\langle u_k , \, 
    E_k\left( e^{- 2\lambda - \overline{\varphi_k}} \,  v\right) \right\rangle_{L^2(D_k)} \\
&= \sum_{k=1}^{2} 
    \left\langle u_k , \, 
    B_k \circ E_k\left( e^{- 2\lambda - \overline{\varphi_k}} \,  v\right) \right\rangle_{L^2(D_k)}.
\end{align*}
Since for each $k=1,2$,
$B_k \circ E_k\left( e^{-2\lambda - \overline{\varphi_k}} v\right)$
belongs to $L^2_a(D_k)$, this ends the proof.
\qedhere
\end{proof}
\noindent
The following proposition is the core of the proof.
\begin{proposition} \label{Prop:Observability}
Given $L>0$, there exists $C_1>0$, such that for $T \in (0,1]$,
for $v \in \Hsq$, one has:
\begin{equation} \label{Eq:Observability}
\| v \|_{\Hsq}
\leq \frac{C_1}{T^2}\, \exp\left( \kappa_\star \frac{{L}^{2}}{T} \right)
\| \mathcal{T}^*(v)\|_{L^{2}(D_1) \times L^{2}(D_2)} .
\end{equation}
\end{proposition}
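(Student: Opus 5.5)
The plan is to prove the equivalent surjectivity statement: every $v\in\Hsq$ can be written as $v=\mathcal{T}(u_1,u_2)$ with
\[
\|u_1\|_{L^2(D_1)}+\|u_2\|_{L^2(D_2)}\lesssim T^{-2}e^{\kappa_\star L^2/T}\|v\|_{\Hsq}.
\]
By the closed range theorem and Lemma~\ref{Lem:Dual-T}, a bounded right inverse of $\mathcal{T}$ with norm $K$ gives \eqref{Eq:Observability} with constant $K$. Indeed, if $v=\mathcal{T}(u_1,u_2)$, then $\|v\|_{\Hsq}^2=\langle (u_1,u_2),\mathcal{T}^*v\rangle$, which is at most $K\|v\|_{\Hsq}\|\mathcal{T}^*v\|$. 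So the task is a quantitative separation of singularities: split $v$ into a piece that is holomorphic on $D_1$ and a piece that is holomorphic on $D_2$.

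\textbf{Cutoff and $\overline\partial$-problem.} Take a smooth cutoff $\chi$ on $\sq$ with $\chi=1$ near the left edges of $\sq$ and $\chi=0$ near the right edges, so that $\overline\partial\chi$ is supported in a thin band around a curve joining $iL$ to $-iL$. As a first guess this curve is $\mathcal{M}$, later to be optimized. Then $\chi v$ extends by $0$ to $D_1$ and $(1-\chi)v$ extends by $0$ to $D_2$. Let $w$ solve $\overline\partial w=v\,\overline\partial\chi$ on $\Omega$, and set
\[
u_1=e^{\varphi_1}\bigl(E_1(\chi v)-w\bigr),\qquad u_2=e^{\varphi_2}\bigl(E_2((1-\chi)v)+w\bigr).
\]
These are holomorphic on $D_1$ and $D_2$ respectively, and $\mathcal{T}(u_1,u_2)=v$. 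Two estimates are then needed:
\[
\int_{D_k}|w|^2e^{2\Re\varphi_k}\,dA \quad (k=1,2),
\qquad
\int_{\sq}|\chi v|^2e^{2\Re\varphi_1}\,dA,\ \int_{\sq}|(1-\chi)v|^2e^{2\Re\varphi_2}\,dA,
\]
each bounded in terms of $\int_{\sq}|v|^2e^{-2\lambda}\,dA$.

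\textbf{The two groups of terms.} For the second group, $2\Re\varphi_1+2\lambda=(L+x)^2/(2T)$. On the support of $\chi$ this is bounded by the maximum of $(L+x)^2/(2T)$ over that support, which is about $L^2/(2T)$ when the support stops near $\Re z=0$; the symmetric bound holds for $\varphi_2$. For $w$, the plan is a Hörmander $L^2$ estimate for $\overline\partial$ on $\Omega$ with a single weight $e^{2\psi}$. The weight $\psi$ must satisfy $\psi\ge \Re\varphi_k$ on $D_k$ (up to constants) and be subharmonic, or better strictly subharmonic with $\Delta\psi\gtrsim T^{-1}$ so that the polynomial loss is only a power of $T$. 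Since $\Re\varphi_1$ and $\Re\varphi_2$ are harmonic, a natural first candidate is a regularized $\max(\Re\varphi_1,\Re\varphi_2)$ plus a small convex correction such as $\epsilon|z|^2/T$. The right-hand side then costs $\int|v\,\overline\partial\chi|^2e^{2\psi}$, to be compared with $e^{-2\lambda}$ on the band, and $\|\overline\partial\chi\|_\infty$ together with the Hörmander constant accounts for the $T^{-2}$.

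\textbf{Main obstacle.} The crude choices above do not give the exponent $\kappa_\star L^2/T$. The weight $\max(\Re\varphi_1,\Re\varphi_2)$ is only $\ge$ each $\Re\varphi_k$ where both are defined; it must also be defined and controlled on the unbounded parts $D_1\setminus D_2$ and $D_2\setminus D_1$. There $\Re\varphi_1$ and $\Re\varphi_2$ behave differently, and the comparison on the band with $e^{-2\lambda}$ is lossy near the corners $\pm iL$. The key step, which I expect to be the hardest, is to choose both the separating curve (the support of $\overline\partial\chi$) and the weight $\psi$ optimally. I would look for $\psi$ as a subharmonic function built from the solution of a harmonic obstacle or boundary-value problem on the square, using the conformal geometry of $\sq$; the appearance of $\Gamma(1/4)$ in $\kappa_\star$ strongly suggests a Schwarz--Christoffel map of the square. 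The choice should make the worst ratio $e^{2\psi+2\lambda}$ on the band equal to $e^{2\kappa_\star L^2/T}$, mirroring the optimization in Dardé--Ervedoza. I would first try to identify $\psi$ with the real part of the holomorphic primitive arising from that conformal map, and then check subharmonicity and the boundary comparisons by hand.
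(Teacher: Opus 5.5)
Your reduction is correct: a right inverse of $\mathcal{T}$ of norm $K$ gives \eqref{Eq:Observability} with constant $K$, and your formulas for $u_1,u_2$ in terms of $\chi$ and $w$ do give holomorphic functions with $\mathcal{T}(u_1,u_2)=v$. But the proposal stops exactly where the proof has to happen, and the sign you impose on the weight is wrong. H\"ormander's theorem solves $\overline\partial w=f$ in $L^2(e^{-\phi})$ for $\phi$ subharmonic, i.e.\ in $L^2(e^{2\psi})$ for $\psi$ \emph{super}harmonic. A growing weight with $\Delta\psi>0$ creates obstructions, as $e^{|z|^2}$ on $\C$ does. Your candidate, a regularized $s:=\max(\Re\varphi_1,\Re\varphi_2)$ plus $\epsilon|z|^2/T$, therefore cannot work, and there is a quick check. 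On $\mathcal{M}$ one has $s+\lambda=L^2/(4T)$, so such a bound would give a cost $e^{(1/4+o(1))L^2/T}$. Applying Theorem~\ref{Thm:1prime} to $e^{-\pi^2T/(4L^2)}e_1$, this contradicts Lissy's lower bound $\kappa_\star\simeq0.6966$.

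The weight that works is the harmonic function in $\Omega$ with the same boundary values as $s$. It is $s$ plus the Green potential of $\Delta s=\frac{L}{T}\delta_{\mathcal{M}}$ in the unbounded domain $\Omega=D_1\cup D_2$:
\[
h=s+\frac{L^2}{2T}\,\tilde\varphi\Big(\frac{z}{L}\Big)=-\log|\psi| .
\]
Here $\tilde\varphi$ is the Dard\'e--Ervedoza function ($\Delta\tilde\varphi=-2\delta_{\mathcal{M}_1}$ in $\Omega_1$, $\tilde\varphi=0$ on $\partial\Omega_1$) and $\psi$ is given by \eqref{Eq:DefPsi}. Because $h$ is harmonic, the weight is removed by multiplying by the holomorphic function $\psi$. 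On the dual side this is the paper's antiholomorphic multiplier $\overline\psi$, with $|e^{\overline{\varphi_k}}\overline\psi|\le1$ because $\tilde\varphi\ge0$. On $\mathcal{M}$ one gets $h+\lambda=\frac{L^2}{4T}+\frac{L^2}{2T}\tilde\varphi(z/L)\le\kappa_\star L^2/T$. This is where $\kappa_\star$ comes from, and it is missing from your plan: the key object is a Green potential of the segment $\mathcal{M}$ in $\Omega$, not a conformal map of $\sq$.

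The second gap is at the corners. Any $\chi$ equal to $1$ near the left edges and $0$ near the right edges has a trace that jumps at $\pm iL$, so $\chi\notin H^1$ there. Averaging over arcs, $|\nabla\chi|\gtrsim|z\mp iL|^{-1}$, and $\int|\overline\partial\chi|^2\,dA$ diverges logarithmically. Hence $\|\overline\partial\chi\|_\infty$ is infinite, and $v\,\overline\partial\chi\notin L^2$ already for $v\equiv1$; a fortiori this holds for $v(z)=(z-iL)^{-1+\epsilon}\in L^2_a(\sq)$. No admissible weight helps, since $e^{2h}\ge e^{2\Re\varphi_k}$ stays bounded below near $\pm iL$. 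The data must be handled in $H^{-1}$, not $L^2$.

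The paper does this on the observability side, with a sharp interface on $\mathcal{M}$:
\begin{itemize}
\item Spencer's formula $B_k^\perp=4\partial\Delta_{D_k}^{-1}\overline\partial$ gives $\partial\mathfrak{a}=\mathcal{B}$.
\item The multiplier $\overline\psi$ makes the right-hand side $L^2$, so Lax--Milgram in $\dot H^1_0(\Omega)$ gives $\overline\psi\mathfrak{a}\in H^1_0(\Omega)$.
\item Cauchy--Green on $\sql$ and $\sqr$ produces the boundary term $\int_{\mathcal{M}}\mathfrak{a}\,\overline v\,d\overline z$, which is bounded in $H^{1/2}_{00}\times C^{2/3}\times H^{-1/2}$.
\end{itemize}
The factor $T^{-2}$ comes from two sources. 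One $T^{-1}$ comes from the $H^{-1/2}$ trace of $e^{-\lambda}v$, through $\overline\partial\lambda$. The other comes from the $C^{2/3}$ seminorm of $e^\lambda\overline\psi^{-1}$ near $\pm iL$ (Lemma~\ref{Lem:PhiHolder}). It does not come from $\|\overline\partial\chi\|_\infty$ and a H\"ormander constant. With the sharp interface and the harmonic weight $h$, your $\overline\partial$ formulation would simply be the dual of the paper's argument.
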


\noindent
The proof of Proposition~\ref{Prop:Observability} is given in Section~\ref{Sec:Main-proof}.
%
%
%
%
%
%
%
\subsection{Proof of Theorems~\ref{Thm:Main} and \ref{Thm:1prime}}
\label{Subsec:Proof-Theorem}
Here we give the proof of Theorem~\ref{Thm:1prime} (and consequently of 
Theorem~\ref{Thm:Main}) based on the statements
of Subsection~\ref{Subsec:Main-Lemmas}. 
This can be seen as a variant of the proof by Hartmann, Kellay and Tucsnak \cite{hartmann2020reachable}. \par
\ \par
First, Proposition~\ref{Prop:Observability} shows that $\mathcal{T}$ is surjective from
$L^{2}_a \left(D_1\right) \times L^{2}_a\left(D_2\right)$
to $\Hsq$ and admits a continuous right-inverse $\mathcal{S}$ that moreover satisfies:
\begin{equation*}
\vvvert \mathcal{S} \vvvert \leq \frac{C_1}{T^2} \, 
\exp\left( \kappa_\star \frac{L^2}{T} \right),
\end{equation*}
as an operator 
$\Hsq \rightarrow
L^{2}_a \left(D_1\right) \times L^{2}_a\left(D_2\right)$. \par
It follows that for all $u_T \in \Hsq$, there exists
$(u_1,u_2) \in L^{2}_a \left(D_1\right) \times L^{2}_a\left(D_2\right)$
such that for $k=1,2$,
\begin{equation*}
\left\|  u_k \right\|_{L^{2}_a \left(D_k\right) } 
\leq \frac{C_1}{T^2}\, \exp\left( \kappa_\star \frac{{L}^{2}}{T} \right)
\| u_{T} \|_{H_{\sq}},
\end{equation*}
and 
\begin{equation} \label{Eq:Final-Decomposition}
{u_{T}}_{|\sq} =  e^{-\varphi_1}{u_1}_{|\sq} + e^{-\varphi_2}{u_2}_{|\sq}.
\end{equation}
In other words, for all $u_T \in \Hsq$, there exists
$(\tilde{u_1},\tilde{u_2}) \in 
L^{2}_a \left(D_1; \, e^{2 \Re(\varphi_1)}\right) \times 
L^{2}_a\left(D_2; \, e^{2 \Re(\varphi_2)}\right)$
such that for $k=1,2$,
\begin{equation*}
\left\|  \tilde{u}_{k} \right\|_{L^{2}_a \left(D_k;\, e^{2 \Re(\varphi_k)}\right) } 
\leq \frac{C_1}{T^2}\, \exp\left( \kappa_\star \frac{{L}^{2}}{T} \right)
\| u_{T} \|_{H_{\sq}},
\end{equation*}
and 
\begin{equation} \label{Eq:Final-Decomposition-2}
{u_{T}}_{|\sq} =  \tilde{u_1}_{|\sq} + \tilde{u_2}_{|\sq}.
\end{equation}
\ \par
We can then conclude by using a result by Aikawa, Hayashi \& Saitoh \cite{Aikawa-Hayashi-Saitoh}.
Precisely, we use the following statement, see \cite[Theorem 6, p. 137]{Saitoh-Book}.
\begin{theorem*}[Aikawa-Hayashi-Saitoh]
The integral transform
\begin{equation} \label{Eq:IntegralTransform}
\mathcal{I}_T[F](z)= \frac {1}{T} \int_{0}^{T} F(\xi) 
\frac{z \exp \left[ \frac{-z^ {2}}{4(T-\xi)} \right]}{2\sqrt{\pi}(T-\xi)^{3/2}}\, 
\xi \, d\xi 
\end{equation}
for functions $F$ satisfying
$\int_{0}^{T} |F(\xi)|^{2} \, \xi \, d\xi < \infty$,
generates an isometry between $L^2((0,T), \frac{\xi}{T} d\xi)$
and the space of holomorphic functions on 
$L^2_a\left(Q_0; \exp\left( \Re\left\{ \frac{z^2}{2T} \right\} \right) \, dA(z)\right)$,
where 
\begin{equation*}
Q_0 := \left\{ z \in \C \ \Big/ \ \Re(z) > |\Im(z)| \right\}.
\end{equation*}
\end{theorem*}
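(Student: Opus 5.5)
The statement is a Paley--Wiener type theorem, and I would prove it by a direct Plancherel computation after two changes of variables. The first straightens the sector $Q_0$ into a half-plane. The second turns the kernel into a Laplace kernel.

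\textbf{Step 1 (reduction to a Laplace transform).} Set $s = T-\xi$ and $p = \frac{1}{4(T-\xi)}$, so that $p$ ranges over $\left(\frac{1}{4T},\infty\right)$. The kernel becomes $z\, e^{-z^2 p}$ times an explicit function of $p$. Hence
\begin{equation*}
\mathcal{I}_T[F](z) = z\, G(z^2), \qquad G(w) := \int_{1/(4T)}^{\infty} \phi(p)\, e^{-wp}\, dp,
\end{equation*}
where $\phi(p)$ is $F(\xi)\,\xi$ multiplied by an explicit power of $(T-\xi)$, $T^{-1}$ and the Jacobian $d\xi/dp$.

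The map $z\mapsto w=z^2$ is a biholomorphism from $Q_0$ onto the right half-plane $\{\Re w>0\}$. Moreover $dA(z) = \frac{dA(w)}{4|w|}$, $|z|^2 = |w|$ and $\Re(z^2)=\Re w$. Therefore
\begin{equation*}
\int_{Q_0} |\mathcal{I}_T[F]|^2 e^{\Re(z^2)/(2T)}\, dA(z) = \frac14 \int_{\Re w>0} |G(w)|^2 e^{\Re w/(2T)}\, dA(w).
\end{equation*}

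\textbf{Step 2 (isometry).} Write $w=a+ib$. For fixed $a>0$, Plancherel in $b$ gives
\begin{equation*}
\int_\R |G(a+ib)|^2\, db = 2\pi \int |\phi(p)|^2 e^{-2ap}\, dp.
\end{equation*}
Integrating in $a$ against $e^{a/(2T)}$ produces the factor
\begin{equation*}
\int_0^\infty e^{-a(2p-1/(2T))}\,da = \frac{1}{2p-\frac{1}{2T}},
\end{equation*}
which is finite exactly because $p>\frac1{4T}$. In the $\xi$ variable, $2p-\frac{1}{2T} = \frac{\xi}{2T(T-\xi)}$, and this is where the weight $\xi$ appears. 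Converting $\int |\phi(p)|^2 \frac{\pi/2}{2p-1/(2T)}\,dp$ back to $\xi$, the powers of $T-\xi$ and the constants should cancel to $\frac1T\int_0^T |F(\xi)|^2\,\xi\, d\xi$. I would check this bookkeeping carefully, since it fixes the normalisation $\frac{1}{2\sqrt\pi}$. Fubini and Tonelli are justified first for $F$ smooth with compact support in $(0,T)$, and then by density.

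\textbf{Step 3 (surjectivity, the main obstacle).} Let $f\in L^2_a(Q_0; e^{\Re(z^2)/(2T)})$. Define $G(w) := f(\sqrt w)/\sqrt w$, a holomorphic function on the right half-plane with
\begin{equation*}
\int_{\Re w>0} |G(w)|^2 e^{\Re w/(2T)}\, dA(w) < \infty.
\end{equation*}
I must show that $G$ is the Laplace transform of some $\phi$ supported in $\left[\frac1{4T},\infty\right)$. Since the image of an isometry is closed, it suffices to prove density of the image. I would argue by orthogonality. Suppose $g$ lies in the weighted space on the half-plane and is orthogonal to all $e^{-wp}\psi(p)$ for $p>\frac1{4T}$. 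Computing the inner product slice by slice through the Fourier transform in $b$, the $b$-Fourier transform of $g(a+i\cdot)$ must vanish on the $p$-region above. Holomorphy forces these Fourier transforms to be of the form $e^{-ap}\hat\gamma(p)$ with $\hat\gamma$ independent of $a$. The weighted integrability as $a\to+\infty$ forces $\hat\gamma$ to be supported in $p\ge\frac1{4T}$. Combined with the orthogonality, this gives $g=0$.

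Making the slice-wise Fourier representation rigorous is the technical heart of Step 3. It requires local $L^2$ bounds on vertical lines from the Bergman norm, via subharmonicity of $|g|^2$, together with a Paley--Wiener argument on each line. An alternative I would keep in reserve is Saitoh's reproducing-kernel route. One computes $K(z,z') = \langle h(z',\cdot),h(z,\cdot)\rangle$ explicitly from Step 2 and identifies it with the Bergman kernel of the weighted space; isometry plus equality of kernels then gives surjectivity.
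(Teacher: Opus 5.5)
Your proposal is correct. The paper gives no proof of this statement: it is quoted from \cite{Aikawa-Hayashi-Saitoh} through \cite[Theorem 6, p.~137]{Saitoh-Book}, so your argument is an independent route.

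Saitoh's book is built around the reproducing-kernel approach to integral transforms, which is the alternative you keep in reserve. In that approach the image of the transform is automatically the reproducing-kernel Hilbert space of $K(z,z')=\int_0^T h(z,\xi)\overline{h(z',\xi)}\,\frac{\xi}{T}\,d\xi$, isometrically as soon as the transform is injective. Surjectivity is then free, and the real work is computing $K$ and recognising it as the Bergman kernel of the weighted space on $Q_0$.

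Your substitutions $w=z^2$, $p=1/(4(T-\xi))$ instead reduce everything to a weighted Paley--Wiener theorem on the right half-plane. They use only Plancherel and the classical Paley--Wiener theorem for $H^2$. They also make visible where the weight $\xi$ and the cutoff $p>\frac1{4T}$ (matching $e^{\Re w/(2T)}$) come from.

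The bookkeeping you left pending does close. One has $\phi(p)=\xi F(\xi)/(T\sqrt{\pi p})$ and $dp=d\xi/(4(T-\xi)^2)$, and then $\frac{\pi}{2}\int\frac{|\phi(p)|^2}{2p-1/(2T)}\,dp=\frac1T\int_0^T|F(\xi)|^2\,\xi\,d\xi$ exactly. So the constant $\frac{1}{2\sqrt\pi}$ is the right one.

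Two simplifications are available.

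First, the orthogonality/density detour in Step 3 is unnecessary. As written it is also slightly out of order: orthogonality is a single integrated identity, so it gives slice-wise information only once you already have the factorisation $e^{-ap}\hat\gamma(p)$. Apply the representation directly to $G=f(\sqrt{\cdot})/\sqrt{\cdot}$:
\begin{itemize}
\item Subharmonicity of $|G|^2$ on discs of radius $a_0/2$ gives $\sup_{a\ge a_0}\int_\R|G(a+ib)|^2\,db\le\frac{4}{\pi a_0}\int_{\Re w>0}|G|^2e^{\Re w/(2T)}\,dA$.
\item Hence $G(\cdot+a_0)$ lies in $H^2$ of the half-plane and is the Laplace transform of some $\gamma_{a_0}\in L^2(0,\infty)$.
\item Uniqueness makes $\gamma(p):=e^{a_0p}\gamma_{a_0}(p)$ independent of $a_0$.
\item Tonelli in the weighted norm gives $2\pi\int_0^\infty|\gamma(p)|^2\int_0^\infty e^{-a(2p-1/(2T))}\,da\,dp<\infty$. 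This forces $\gamma=0$ a.e.\ on $(0,\frac1{4T})$ and $\int|\gamma(p)|^2\,\frac{dp}{2p-1/(2T)}<\infty$.
\item Undoing the change of variables gives the preimage $F$.
\end{itemize}

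Second, Step 2 needs no density argument. Cauchy--Schwarz already gives $\phi\,e^{-a\cdot}\in L^1\cap L^2$ for each $a>0$, so Plancherel and Tonelli apply directly to every admissible $F$.
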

\ \par
Now for $h \in L^2(0,T)$, the solution of 
\begin{equation} \nonumber 
\left\{ \begin{array}{l}
\partial_t u -\partial^2_{xx} u =0 \text{ for } (t,x) \in (0,T) \times \R^+ , \\
u(t,0) = h(t) \text{ on } (0,T) \\
u(0,x) = 0 \text{ on } \R^+ ,
\end{array} \right. 
\end{equation}
is given by the following convolution, see e.g. \cite[Eq. (7), p. 129]{Saitoh-Book}:
\begin{equation} \label{Eq:HeatAsConvolution}
u (t,x)= \int_{0}^{t} h(\xi) 
\frac{x \exp \left[ \frac{-x^ {2}}{4(t-\xi)} \right]}{2\sqrt{\pi}(t-\xi)^{3/2}}\, d\xi,
\end{equation}
Adapting the above theorem to $D_1$ and $D_2$ and applying it
to $\tilde{u}_1/T$ and $\tilde{u}_2/T$,
we find that there exist 
$F_1$ and $F_2$ in $L^2( (0,T), \xi\, d\xi)$
such that $\mathcal{I}_{T,D_1}[F_1] = \tilde{u}_1/T$ 
and $\mathcal{I}_{T,D_2}[F_2] = \tilde{u}_2/T$, with for $k=1,2$,
\begin{equation*}
\|F_k\|_{L^2((0,T), \frac{\xi}{T} d\xi)} 
= \|\mathcal{I}_{T,D_k}[F_k]\|_{L^2_a(D_k, e^{2 \Re(\varphi_k)})}
= \frac{1}{T}
    \left\| \widetilde{u}_k \right\|_{L^{2}_a \left(D_k, \, e^{2\Re(\varphi_k)}\right)}.
\end{equation*}
Correspondingly, comparing \eqref{Eq:IntegralTransform} and \eqref{Eq:HeatAsConvolution},
we see that the solutions $v_1$ and $v_2$ of
\begin{equation*}
\left\{ \begin{array}{l}
\partial_t v_1 -\partial^2_{xx} v_1 =0 \text{ in } (0,T) \times (-L,+\infty), \\
v_1(t,-L) = t \, F_1(t) \text{ on } (0,T) \\
v_1(0,x) = 0 \text{ on } (-L,+\infty),
\end{array} \right. 
\ \ 
\left\{ \begin{array}{l}
\partial_t v_2 -\partial^2_{xx} v_2 =0 \text{ in } (0,T) \times (-\infty,L), \\
v_2(t,L) = t \, F_2(t) \text{ on } (0,T) \\
v_2(0,x) = 0 \text{ on } (-\infty,L),
\end{array} \right. 
\end{equation*}
reach $\tilde{u}_{1|(-L,+\infty)}$ and $\tilde{u}_{2|(-\infty,L)}$ at time $T$,
respectively. Calling $G_k(t) = t F_k(t)$, we have the estimates on $G_1$ and $G_2$:
for $k=1,2$,
\begin{equation*}
\| G_k \|_{L^2(0,T)} 
= \| G_k (\cdot) / \sqrt{\cdot} \|_{L^2(0,T;\, \xi\, d\xi)} \leq
T^{1/2} \| F_k \|_{L^2(0,T;\, \xi\, d\xi)} = 
\left\| \widetilde{u_k} \right\|_{L^{2}_a \left(D_k, \, e^{2\Re(\varphi_k)}\right)} .
\end{equation*}
Now by superposition principle, $v:=v_1 + v_2$ satisfies
\begin{equation*}
\left\{ \begin{array}{l}
\partial_t v -\partial^2_{xx} v =0 \text{ for } (t,x) \in (0,T) \times (-L,L), \\
v(t,-L) = g_{-}(t) \text{ on } (0,T) \\
v(t,L) = g_{+}(t) \text{ on } (0,T) \\
v(0,x) = 0 \text{ on } (-L,L),
\end{array} \right. 
\end{equation*}
with
\begin{equation*}
g_{-}(t) := G_{1}(t) + v_{2}(t,-L)
\ \text{ and } \ 
g_{+}(t) := G_{2}(t) + v_{1}(t,L).
\end{equation*}
By using $L^1 \times L^2$ convolution estimate 
(relying on \eqref{Eq:HeatAsConvolution}), we see that for $k=1,2$,
\begin{equation*}
\left\| v_k \left(t,(-1)^{k+1}L\right) \right\|_{L^2(0,T)} 
    \lesssim \| G_k \|_{L^2(0,T)}.
\end{equation*}
This gives the desired estimate on $g_{-}$ and $g_{+}$.
\hfill $\Box$
%
%
%
%
%
%
%
%
\subsection{The case of the null controllability: proof of Theorem~\ref{Thm:DEbis}}
\label{Subsec:Null-Controllability}
We will use this following elementary lemma.
\begin{lemma} \label{Lem:Est-Free-Solution}
Let $L>0$. Define $\lambda$ as in \eqref{Def:lambda}.
For any $T>0$ the following holds.
For $u_0 \in L^{2}(-L,L)$, we define 
\begin{equation} \label{Eq:Extension-u0}
E[u_{0}](x) = u_{0}(x) \ \text{ in } \ (-L,L) 
\ \text{ and } \ 
E[u_{0}](x) = 0 \ \text{ in } \ \R \setminus (-L,L) .
\end{equation}
Let $\left(e^{t\partial^2_{xx}}\right)_{t \geq 0}$ the heat semigroup on $L^2(\R)$.
Then $e^{T\partial^2_{xx}} E[u_0]$ can be extended as an entire function 
on $\C$, whose restriction to $\sq$ moreover satisfies
\begin{equation} \label{Eq:Est-Free-Solution}
\left\| e^{T\partial^2_{xx}} E[u_0] \right\|_{\Hsq} 
\lesssim \|u_0\|_{L^2(-L,L)}.
\end{equation}
\end{lemma}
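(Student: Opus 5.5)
The plan is to use the explicit Gaussian kernel and estimate pointwise. For $z = x+iy \in \C$ we set
\begin{equation*}
w(z) := e^{T\partial^2_{xx}} E[u_0](z) = \frac{1}{\sqrt{4\pi T}} \int_{-L}^{L} u_0(s)\, e^{-\frac{(z-s)^2}{4T}}\, ds .
\end{equation*}
Since $u_0$ has compact support and the integrand is entire in $z$ and locally uniformly integrable in $s$, differentiation under the integral shows that $w$ is entire. It coincides with the semigroup on the real line, which gives the extension claim.

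For the estimate, we write $\Re\big((z-s)^2\big) = (x-s)^2 - y^2$. This gives
\begin{equation*}
|w(z)| \leq \frac{e^{\frac{y^2}{4T}}}{\sqrt{4\pi T}} \int_{-L}^{L} |u_0(s)|\, e^{-\frac{(x-s)^2}{4T}}\, ds .
\end{equation*}
The key observation is that the growth factor $e^{y^2/(4T)} = e^{\lambda(z)}$ is exactly compensated by the weight $e^{-2\lambda}$ in $\Hsq$. Hence
\begin{equation*}
|w(z)|^2 e^{-2\lambda(z)} \leq \big( (\rho_T * |E[u_0]|)(x) \big)^2,
\qquad \rho_T(x) := \frac{1}{\sqrt{4\pi T}} e^{-\frac{x^2}{4T}},
\end{equation*}
where $\rho_T$ is the heat kernel. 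Note that $\|\rho_T\|_{L^1(\R)} = 1$.

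We then integrate over $\sq \subset (-L,L)\times(-L,L)$. Since the bound is independent of $y$,
\begin{equation*}
\|w\|_{\Hsq}^2 \leq \int_{-L}^{L} \int_{\R} \big( (\rho_T * |E[u_0]|)(x)\big)^2 \, dx\, dy \leq 2L\, \|\rho_T\|_{L^1}^2 \|u_0\|_{L^2(-L,L)}^2 ,
\end{equation*}
where the last step is Young's $L^1 \times L^2$ convolution inequality. This yields \eqref{Eq:Est-Free-Solution} with constant $\sqrt{2L}$, uniformly in $T>0$.

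The only delicate point is to notice that the weight $\lambda$ in \eqref{Def:lambda} exactly cancels the $e^{y^2/(4T)}$ growth of the complexified kernel. Without the weight, the bound would blow up like $e^{L^2/(4T)}$. Everything else is routine.
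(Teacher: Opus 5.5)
Your proof is correct and follows essentially the same route as the paper: the weight $e^{-\lambda}$ exactly cancels the $e^{y^2/(4T)}$ growth of the complexified Gaussian, and each horizontal slice of $\sq$ is then controlled by Young's $L^1 \times L^2$ convolution inequality with a kernel of unit $L^1$ norm, integrated over $y \in (-L,L)$. The only cosmetic difference is that you take absolute values first and reduce to the real heat kernel $\rho_T$, whereas the paper keeps the oscillating factor $e^{-i b(a-x)/(2T)}$ inside the kernel, which has the same $L^1$ norm.
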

\begin{proof}[Proof of Lemma~\ref{Lem:Est-Free-Solution}]
We rely on the convolution formula 
\begin{equation*}
\forall z \in \C, \ \ 
e^{T\partial^2_{xx}} E[u_0] (z)  = \frac{1}{\sqrt{4\pi T}} 
    \int_{-L}^{L} u_0(x) \exp\left(-\frac{(z-x)^{2}}{4T}\right) \, dx,
\end{equation*}
which expresses $e^{T\partial^2_{xx}} E[u_0]$ as an entire function. \par
Now for $z = a + ib \in \sq$ and $x \in (-L,L)$, we have
\begin{eqnarray*}
\exp\left( -\lambda(z) \right) \exp\left(-\frac{(z-x)^{2}}{4T}\right)
&=& \exp\left( -\frac{b^2}{4T} -\frac{\left[ (a-x) + ib \right]^2}{4T} \right) \\
&=& \exp\left( -\frac{(a-x)^2}{4T} - i\frac{2b(a-x)}{4T} \right).
\end{eqnarray*}
Hence, for fixed $b \in (-L,L)$, we have
\begin{equation*}
\exp\left( -\lambda(a+ib) \right)e^{T\partial^2_{xx}} E[u_0] (a+ib) =\frac{1}{\sqrt{4\pi T}} 
 \int_{-L}^{L} u_0(x) \exp\left( -\frac{(a-x)^2}{4T} - i\frac{2b(a-x)}{4T} \right) \, dx.
\end{equation*}
Since the function 
$y \mapsto \frac{1}{\sqrt{4\pi T}} \exp\left( -\frac{y^2}{4T} - i\frac{2by}{4T} \right)$
has an $L^1$ norm equal to $1$, the convolution gives that each $b$-slice of 
$\exp\left( -\lambda(z) \right) e^{T\partial^2_{xx}} E[u_0] (z)$ in $\sq$ 
can be estimated in $L^2$ by $\|u_0\|_{L^2(-L,L)}$.
It follows that \eqref{Eq:Est-Free-Solution} holds.
\end{proof}
\begin{proof}[Proof of Theorem~\ref{Thm:DEbis}]
Now given $u_0 \in L^2(-L,L)$, we introduce $E[u_0]$ as in \eqref{Eq:Extension-u0}, 
and then $u_T := e^{T\partial^2_{xx}} E[u_0] \in \Hsq$. 
Applying Lemma~\ref{Lem:Est-Free-Solution} and Theorem~\ref{Thm:1prime} to this $u_{T}$, we find controls $\check{g}_-$ and $\check{g}_+$ driving $0$ to $u_T$, 
satisfying estimates as in \eqref{Eq:Est-Controls-NC}, and a corresponding solution 
$v$ of \eqref{Eq:Heat}--\eqref{Eq:Heat-BC}. 
Then $-v+e^{t\partial^2_{xx}} E[u_0] $ drives $u_0$ to $0$ in $(-L,L)$, and 
we introduce ${g}_-$ and ${g}_+$ as its trace at $x = \pm L$.
That the $e^{t\partial^2_{xx}} E[u_0]$-part of $g_-$ and $g_+$ also satisfy
estimates as in \eqref{Eq:Est-Controls-NC} is elementary, since for instance
the energy equality for the heat equation gives 
$\|e^{t\partial^2_{xx}} E[u_0]\|_{L^2(0,T; H^1(\R))} \leq (1+T^{1/2})\|u_0\|_{L^2}$.
Theorem~\ref{Thm:DEbis} follows.
\end{proof}
%
%
%
%
%
%
%
%
%
%
%
%
%
%
%
\section{Main part of the proof: Proposition~\ref{Prop:Observability}}
\label{Sec:Main-proof}
We now tackle the proof of Proposition~\ref{Prop:Observability}, which,
as we mentioned before, is the principal part of the proof.
We begin with some preliminary material.
%
%
%
%
%
%
\subsection{Poisson equation and Bergman projectors in $L^2(D_k)$}
Following Bell \cite[Theorem 15.2]{Bell2015}, which is written in the context of smooth 
bounded domains, we can give an explicit form of the Bergman projector in $D_k$
(for particular data) as follows. In particular, \eqref{Eq:Bergman-on-Di} below
is referred to in \cite{Bell2015} as {\it Spencer's formula}. \par
As before we denote $B_k$ the Bergman projector 
$L^{2}(D_k) \rightarrow L_a^{2}(D_k)$, 
and define its complement projector $B_k^{\perp} := \Id - B_k$. 
To simplify the presentation, we first focus on the case of the quadrant
\begin{equation*}
Q:= \left\{ z \in \C \ \Big/ \ \Re(z) \geq 0, \ \Im(z) \geq 0 \right\},
\end{equation*}
and study the Bergman projector $B$ and its complement $B^\perp =\Id - B$ there,
the transposition of the result to $D_1$ and $D_2$ being straightforward. \par
\ \par
The first step is to consider Poisson's equation in the quadrant. 
This is described in the following lemma.
\begin{lemma} \label{Lem:Poisson}
For any $F \in L^{2}(Q; \R^2)$, there exists a unique function $u$ in
\begin{equation} \label{Eq:DefH1Point}
\dot{H}^1_0(Q) : = \left\{ u \in L^1_{loc}(Q) \ / \ \nabla u \in L^2(Q)
\text{ and } u_{|\partial Q} =0
 \right\}
\end{equation}
of
\begin{equation} \label{Eq:PoissonQ}
\Delta u = \div F \text{ in } Q, \ \ u=0 \text{ on } \partial Q.
\end{equation}
This solution satisfies
\begin{equation} \label{Eq:Est-Poisson-0}
\| \nabla u \|_{L^{2}(Q)} \leq \| F \|_{L^2(Q)},
\end{equation}
and, given $K$ a compact subset of $\overline{Q}$, there is a constant $C_K >0$ 
such that this function satisfies the estimate:
\begin{equation} \label{Eq:Est-Poisson-1}
\| u \|_{H^{1}(K)} \leq C_K \| F \|_{L^2(Q)}.
\end{equation}
\end{lemma}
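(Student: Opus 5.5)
The proof is a Lax--Milgram (Riesz representation) argument in the homogeneous space $\dot{H}^1_0(Q)$, followed by a Poincaré-type inequality near the boundary to get the local bound \eqref{Eq:Est-Poisson-1}.

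\textbf{Step 1: the space is Hilbert.} We equip $\dot{H}^1_0(Q)$ with the inner product $\langle u,w\rangle := \int_Q \nabla u\cdot\overline{\nabla w}$. This is a genuine norm, since $\nabla u = 0$ forces $u$ to be constant, and the zero trace then forces $u=0$. For completeness, take a Cauchy sequence. Its gradients converge in $L^2(Q)$. The local Poincaré inequality of Step 3 below, valid on bounded Lipschitz pieces $Q\cap B(0,R)$ whose boundary contains a part of $\partial Q$ of positive length, gives $L^2_{loc}$ convergence of the functions themselves. The limit therefore lies in $L^1_{loc}$, has an $L^2$ gradient, and has zero trace by continuity of the trace on these pieces.

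\textbf{Step 2: existence, uniqueness and \eqref{Eq:Est-Poisson-0}.} The weak formulation of \eqref{Eq:PoissonQ} is
\begin{equation*}
\int_Q \nabla u\cdot\overline{\nabla w} = \int_Q F\cdot\overline{\nabla w} \quad \text{for all } w\in \dot{H}^1_0(Q).
\end{equation*}
Its right-hand side is an antilinear functional bounded by $\|F\|_{L^2}\|\nabla w\|_{L^2}$. Riesz's theorem therefore gives a unique $u$. Testing with $w=u$ and applying Cauchy--Schwarz gives $\|\nabla u\|^2\le \|F\|\,\|\nabla u\|$, which is \eqref{Eq:Est-Poisson-0}.

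The one point requiring care is that this weak formulation is equivalent to the distributional equation with boundary condition. This holds provided $C_c^\infty(Q)$ functions (used for the distributional sense) belong to $\dot{H}^1_0(Q)$, which is clear. Uniqueness among all solutions in $\dot{H}^1_0(Q)$ also requires that $C_c^\infty(Q)$ be dense in $\dot{H}^1_0(Q)$ for the gradient norm. I would prove density by a cutoff $\chi(\cdot/R)$. The error term $\|u\nabla\chi_R\|_{L^2}$ is controlled by a Hardy inequality $\int_Q |u|^2/|z|^2 \lesssim \int_Q|\nabla u|^2$, valid because in polar coordinates $u$ vanishes at $\theta=0$ and $\theta=\pi/2$, so a one-dimensional Poincaré inequality in the angle applies on each circle. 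The cut-off function is then approximated by standard mollification after a slight inward dilation.

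\textbf{Step 3: the local estimate \eqref{Eq:Est-Poisson-1}.} Since $K$ is compact, $K\subset \overline{Q}\cap \overline{B(0,R)}$ for some $R$. Set $Q_R := Q\cap B(0,R)$, a bounded Lipschitz domain. On it, $u$ vanishes on $\partial Q\cap B(0,R)$, a boundary portion of positive measure, so the Poincaré inequality $\|u\|_{L^2(Q_R)}\le C_R\|\nabla u\|_{L^2(Q_R)}$ holds. One can see this by compactness, or more concretely by the angular Poincaré inequality along arcs $\{re^{i\theta}\}$, which gives $\|u\|_{L^2(Q_R)}\le R\,\|\nabla u\|_{L^2(Q_R)}$ with an explicit constant. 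Combining with \eqref{Eq:Est-Poisson-0} yields \eqref{Eq:Est-Poisson-1}.

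The main obstacle is the functional framework on the unbounded domain: making sure $\dot{H}^1_0(Q)$ is complete and that test functions are dense, so that uniqueness holds in the stated class. The angular Poincaré/Hardy inequality on the quadrant handles both issues at once.
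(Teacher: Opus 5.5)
Your proof is correct. Its overall architecture matches the paper's: Riesz/Lax--Milgram in $\dot{H}^1_0(Q)$ with the gradient inner product, completeness via local trace and Poincaré inequalities, then a local Poincaré inequality for \eqref{Eq:Est-Poisson-1}.

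Where you differ is the one genuinely delicate step, the density of $C^\infty_c(Q)$ in $\dot{H}^1_0(Q)$. The paper extends $u$ to $\R^2$ by odd reflection across both axes. It then invokes the density of $C^\infty_c(\R^2)$ in the homogeneous space $\dot{H}^1(\R^2)$, quoted from Sohr--Specovius-Neugebauer and Ortner--S\"uli. This is a non-trivial fact in two dimensions, where no Hardy inequality holds and logarithmic cut-offs are needed. Finally the paper projects the approximants back onto doubly odd functions, so that they vanish on the axes, and cuts off near the axes.

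You instead use the Dirichlet condition on both edges of the quadrant. The angular Poincaré inequality on $(0,\pi/2)$ gives the Hardy inequality $\int_Q |u|^2/|z|^2 \, dA \le \frac14 \int_Q |\nabla u|^2 \, dA$. This makes the cut-off at infinity elementary. It also gives the local bound with an explicit constant, $\|u\|_{L^2(Q\cap B(0,R))}\le \frac{R}{2}\|\nabla u\|_{L^2(Q)}$. Your route is therefore self-contained and works for any sector with Dirichlet data on both edges. The paper's route depends on an external density theorem and on the reflection symmetry of the quadrant.

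One small wording correction concerns the final mollification. Since $Q$ is a cone with vertex $0$, a dilation centred at $0$ does not move the support off $\partial Q$. Either dilate about an interior point, using the convexity of $Q$, or translate by $\varepsilon(1+i)$ before mollifying.
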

Note that for $u \in L^1_{loc}(Q)$ with $\nabla u \in L^2(Q)$, the trace 
on $\partial Q$ is well-defined, since by a local Poincaré inequality, one has $u \in H^1(Q \cap B(0,R))$ for any $R>0$. \par
Lemma~\ref{Lem:Poisson} is proved in Section~\ref{Sec:Technical-proofs}. 
It has the following counterpart in $\Omega$, also proved therein.
\begin{lemma} \label{Lem:Poisson-Omega}
For any $F \in L^{2}(\Omega; \, \C)$, there exists a unique $u$ in 
\begin{equation*}
\dot{H}^1_0(\Omega) := \left\{ u \in L^2_{loc}(\Omega; \, \C)  \ /\ 
\nabla u \in L^2(\Omega) \text{ and } u_{|\partial \Omega} =0 \right\},
\end{equation*}
such that
\begin{equation} \label{Eq:Poisson}
\Delta u = \overline{\partial} F \text{ in } \Omega 
\text{ and } u=0 \text{ on } \partial \Omega.
\end{equation}
This solution satisfies
\begin{equation} \label{Eq:Est-Poisson-Omega}
\| \nabla u \|_{L^{2}(\Omega)} \leq \| F \|_{L^2(\Omega)},
\end{equation}
and for any compact set $K \subset \overline{\Omega}$, there is a constant $C_K>0$
such that this solution satisfies
\begin{equation} \label{Eq:Est-Psi-Hat}
\| u \|_{H^1(K)} 
\leq C_K \| F \|_{L^2(\Omega)} .
\end{equation}
\end{lemma}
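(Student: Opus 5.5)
The approach is the Lax--Milgram (Riesz) method in the homogeneous space $\dot{H}^1_0(\Omega)$, with the Hilbert norm $\|\nabla u\|_{L^2(\Omega)}$. Before using this norm, I need to check that it really is a norm, i.e.\ that the Dirichlet condition rules out nonzero constants. The boundary $\partial\Omega$ is made of four half-lines: two from each of the sectors $D_1$ and $D_2$, meeting at the reentrant corners $\pm iL$. The key tool is a Poincaré-type inequality on bounded pieces: for $u$ with $\nabla u\in L^2(\Omega)$ and vanishing trace on $\partial\Omega$, one has $\|u\|_{L^2(\Omega\cap B(0,R))}\le C_R\|\nabla u\|_{L^2(\Omega)}$. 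This follows from the classical Poincaré inequality on the Lipschitz bounded domain $\Omega\cap B(0,R)$, since $u$ vanishes on a part of its boundary of positive length. Consequently, $\dot H^1_0(\Omega)$ is complete. To see this, take a sequence Cauchy in gradient; it is then Cauchy in $H^1(\Omega\cap B(0,R))$ for every $R$. A diagonal argument gives a limit $u\in H^1_{loc}$ with zero trace and $\nabla u\in L^2$, and uniqueness of limits makes it consistent across $R$.

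For existence, I read \eqref{Eq:Poisson} weakly: for all test functions $w\in\dot H^1_0(\Omega)$ (or $C_c^\infty(\Omega)$, then by density),
\begin{equation*}
\int_\Omega \nabla u\cdot\overline{\nabla w} \,=\, \int_\Omega F\,\overline{\partial w}.
\end{equation*}
This comes from $\langle \overline{\partial}F, w\rangle = -\int F\,\overline{\partial w}$, since $\overline{\overline{\partial}\,\overline{w}}=\partial w$; the signs will be fixed carefully using \eqref{Eq:Cauchy-Green}. The right-hand side is an antilinear form on $\dot H^1_0(\Omega)$, bounded by
\begin{equation*}
\|F\|_{L^2}\,\|\partial w\|_{L^2}\le \|F\|_{L^2}\,\tfrac12\big(\|\partial_x w\|+\|\partial_y w\|\big)\le \tfrac{1}{\sqrt2}\|F\|_{L^2}\|\nabla w\|_{L^2},
\end{equation*}
because $|\partial w|\le \tfrac12(|\partial_xw|+|\partial_yw|)$ pointwise and $(a+b)^2\le 2(a^2+b^2)$. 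The Riesz representation theorem in $\dot H^1_0(\Omega)$ then gives a unique $u$. Testing with $w=u$ yields $\|\nabla u\|^2\le \|F\|\,\|\nabla u\|$, which is \eqref{Eq:Est-Poisson-Omega} (even with the factor $1/\sqrt2$). Uniqueness follows directly: the difference of two solutions is a harmonic element of $\dot H^1_0(\Omega)$ orthogonal to the whole space, so its gradient vanishes, and the zero trace then forces it to vanish.

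For \eqref{Eq:Est-Psi-Hat}, given a compact $K\subset\overline\Omega$, choose $R$ with $K\subset B(0,R)$. Then
\begin{equation*}
\|u\|_{H^1(K)}\le \|u\|_{L^2(\Omega\cap B(0,R))}+\|\nabla u\|_{L^2(\Omega)}\le (C_R+1)\|F\|_{L^2(\Omega)},
\end{equation*}
using the Poincaré inequality above together with the energy estimate. The main (though mild) obstacle is the justification of this local Poincaré inequality and of the completeness/trace statements on the unbounded nonconvex domain $\Omega$. I would handle it by noting that $\Omega\cap B(0,R)$ is a bounded Lipschitz domain for $R>L$, whose boundary contains a nontrivial piece of $\partial\Omega$; this is exactly the same argument as the one sketched for the quadrant in Lemma~\ref{Lem:Poisson}, which I would mirror.
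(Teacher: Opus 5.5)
Your strategy matches the paper's: Riesz/Lax--Milgram in $\dot H^1_0(\Omega)$ with the gradient norm, completeness via a local Poincar\'e inequality, the choice $w=u$ for \eqref{Eq:Est-Poisson-Omega}, and local Poincar\'e again for \eqref{Eq:Est-Psi-Hat}. Existence and both estimates are fine, and your constant $1/\sqrt2$ is even slightly better. The gap is in \emph{uniqueness}. The lemma asserts uniqueness among $u\in\dot H^1_0(\Omega)$ solving $\Delta u=\overline{\partial}F$ \emph{in the sense of distributions}, and it is used that way later: one only knows that $\tilde{\A}-\beta$ is harmonic with zero trace, and concludes $\tilde{\A}=\beta$. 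For a harmonic $h\in\dot H^1_0(\Omega)$ you only know $\int_\Omega\nabla h\cdot\overline{\nabla w}=0$ for $w\in C^\infty_c(\Omega)$. Your step ``orthogonal to the whole space'' (equivalently, your parenthetical ``then by density'') requires $C^\infty_c(\Omega)$ to be dense in $\dot H^1_0(\Omega)$. You never prove this. Instead you place the difficulty in the Poincar\'e inequality and completeness, which are the routine part.

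This density is not a formality. Elements of $\dot H^1_0(\Omega)$ are not known to lie in $L^2(\Omega)$. A cut-off $\eta(\cdot/R)$ therefore leaves an error $u\nabla\eta_R$ of size $R^{-1}\|u\|_{L^2(\Omega\cap\{R<|z|<2R\})}$, and your bound $\|u\|_{L^2(\Omega\cap B(0,2R))}\le C_{2R}\|\nabla u\|_{L^2(\Omega)}$ gives at best boundedness of this error, not decay. Moreover, the ``zero trace, $L^2$ gradient'' space can in general be strictly larger than the closure of test functions: $1-1/|x|$ outside the unit ball of $\R^3$ is a nonzero harmonic element of it.

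Proving this density is exactly the content of the paper's proof (Lemma~\ref{Lem:dotH10Omega}). A cut-off $\rho$ vanishing on $B(0,2L)$ and equal to $1$ outside $B(0,3L)$ splits $u$ into two kinds of pieces. The bounded piece $(1-\rho)u\in H^1_0(\Omega\cap B(0,3L))$ is approximated by the classical result on bounded Lipschitz domains. The far pieces $\rho\mathbf{1}_{D_k}u$ are zero-trace functions on the quadrant $D_k$, since $\partial D_k\setminus B(0,2L)\subset\partial\Omega$. These are handled by Lemma~\ref{Lem:dotH10}: odd reflection to $\dot H^1(\R^2)$, density of $C^\infty_c(\R^2)$ there modulo constants, projection back onto odd--odd functions, and a cut-off near the axes.

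Within your own framework you could instead close the gap with a Hardy-type bound. Cut off separately in each far end $D_k\setminus\overline{D_{3-k}}$, along annuli $A_R$ centred at the vertex $\mp L$ of $D_k$. For $R$ large, each circular arc of opening $\pi/2$ in $A_R$ has both endpoints on $\partial\Omega$. The one-dimensional Poincar\'e inequality in the angular variable then gives $\|u\|_{L^2(A_R)}\le R\|\nabla u\|_{L^2(A_R)}$, hence $\|u\nabla\eta_R\|_{L^2}\lesssim\|\nabla u\|_{L^2(A_R)}\to0$. The truncated function is then handled by density in $H^1_0$ of a bounded Lipschitz domain.
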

Lemma~\ref{Lem:Poisson} allows one to define the Poisson operator $\Delta_Q^{-1}$ 
which maps $F \in \div L^2$ to $u$ in $\dot{H}^1_0(Q)$, and which we now use.
We have indeed the following lemma giving an explicit expression for the complement 
projector $B^{\perp}$ of $B$, and consequently for $B$ itself.
\begin{lemma} \label{Lem:Bergman-on-Di}
For any $v \in L^2(Q)$, the complement Bergman projector in $Q$ is given by:
\begin{equation} \label{Eq:Bergman-on-Di}
B_{Q}^{\perp}(v) 
= 4 \partial \Big( \Delta^{-1}_{Q} \left[\, \overline{\partial} v\right] \Big).
\end{equation}
\end{lemma}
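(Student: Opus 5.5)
Set $w := 4 \partial \big( \Delta^{-1}_{Q}[\overline{\partial} v] \big)$. The plan is to show that $w$ is the orthogonal projection of $v$ onto the orthogonal complement of $L^2_a(Q)$ in $L^2(Q)$. It suffices to prove three facts: (i) $w \in L^2(Q)$; (ii) $v - w$ is holomorphic, so $v-w\in L^2_a(Q)$; (iii) $w \perp L^2_a(Q)$. Together these give $v - w = B_Q v$ and hence \eqref{Eq:Bergman-on-Di}.

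For (i), note that $\overline{\partial} v = \frac12 \div (v, iv)$ in the distributional sense, with $(v,iv) \in L^2(Q;\C^2)$. Lemma~\ref{Lem:Poisson}, applied to real and imaginary parts separately, therefore provides $u := \Delta_Q^{-1}[\overline{\partial} v] \in \dot H^1_0(Q)$ with $\|\nabla u\|_{L^2} \lesssim \|v\|_{L^2}$. Consequently $w = 4\partial u \in L^2(Q)$.

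For (ii), use \eqref{Eq:Laplacian} in the sense of distributions:
\begin{equation*}
\overline{\partial}\, w = 4 \partial\overline{\partial} u = \Delta u = \overline{\partial} v .
\end{equation*}
Hence $\overline{\partial}(v-w)=0$ in $\mathcal{D}'(Q)$. By Weyl's lemma for the Cauchy--Riemann operator, $v-w$ is holomorphic, and it lies in $L^2$ by (i).

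For (iii), take $h \in L^2_a(Q)$ and compute
\begin{equation*}
\langle w, h\rangle_{L^2(Q)} = 4\int_Q \partial u \, \overline{h}.
\end{equation*}
The function $\overline{h}$ is antiholomorphic, so $\partial \overline{h} = \overline{\overline{\partial} h}=0$. Formally, \eqref{Eq:Cauchy-Green2} gives
\begin{equation*}
\int_Q \partial u \, \overline{h} = -\int_Q u\, \partial\overline{h} - \frac{1}{2i}\int_{\partial Q} u\overline{h}\, d\overline z ,
\end{equation*}
and both terms on the right vanish: the first because $\partial\overline{h}=0$, the second because $u|_{\partial Q}=0$. This yields $w \perp L^2_a(Q)$.

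The main obstacle is justifying this integration by parts. The domain $Q$ is unbounded, $u$ is only in $\dot H^1_0$ rather than $L^2$, and $h$ need not be regular up to $\partial Q$. The first plan is a density argument: approximate $u$ in the $\dot H^1$ seminorm by $u_n \in C_c^\infty(Q)$ (compactly supported inside the open quadrant). For such $u_n$, the identity $\int_Q \partial u_n\, \overline h = -\int_Q u_n\,\partial \overline h = 0$ is just the definition of the distributional derivative, since $\overline{h}$ is smooth in the interior. Passing to the limit works because $\partial u_n \to \partial u$ in $L^2$ and $h \in L^2$.

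This requires density of $C_c^\infty(Q)$ in $\dot H^1_0(Q)$ for the gradient norm. I would prove it in two steps. First, use a cutoff $\chi(\cdot/R)$; the commutator term $u\nabla\chi_R$ is controlled by a Hardy-type inequality $\|u/|z|\|_{L^2(Q)} \lesssim \|\nabla u\|_{L^2(Q)}$, valid since $u$ vanishes on both sides of the sector. Second, approximate near the Lipschitz boundary by the standard $H^1_0$ density on bounded Lipschitz domains. Alternatively, one can define $\dot H^1_0(Q)$ directly as this closure, consistent with the uniqueness in Lemma~\ref{Lem:Poisson}.
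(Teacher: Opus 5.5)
Your proof is correct and follows the paper's own argument: the paper also sets $\alpha = 4\Delta_Q^{-1}[\overline{\partial} v]$, gets $\overline{\partial}(v-\partial\alpha)=0$ from \eqref{Eq:Laplacian}, and proves $\partial\alpha\perp L^2_a(Q)$ by a Cauchy--Green integration by parts justified through the density of $C^\infty_c(Q)$ in $\dot H^1_0(Q)$. The only difference is how that density is obtained: the paper proves it in Lemma~\ref{Lem:dotH10} by odd symmetrization and density in $\dot H^1(\R^2)$, while your cutoff argument with the Hardy inequality on the quadrant is an equally valid and somewhat more elementary route.
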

Lemma \ref{Lem:Bergman-on-Di} is also proved in Section~\ref{Sec:Technical-proofs}.
%
%
%
%
%
%
%
\subsection{Traces of functions of $L^2_a(\mathfrak{\sq})$}
We state as a lemma a quite simple property of functions in $L^2_a(\sq)$.
\begin{lemma} \label{Lem:Traces-Bergman}
Any function in $L^2_a(\sq)$ admits traces in $H^{-1/2}(\partial \sq)$.
More generally, if $u \in L^2(\sq)$ satisfies $\overline{\partial} u \in L^2(\sq)$,
then $u$ admits a trace in $H^{-1/2}(\partial \sq)$, satisfying
\begin{equation*}
\| u_{|\partial \sq}\|_{H^{-1/2}(\partial \sq)} 
\lesssim \| u \|_{L^{2}(\sq)} + \| \overline{\partial} u \|_{L^{2}(\sq)}.
\end{equation*}
The same is true on $\sql$ and $\sqr$.
\end{lemma}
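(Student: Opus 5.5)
We define the trace by duality, using the Cauchy--Green formula \eqref{Eq:Cauchy-Green}. Since $\sq$ is a bounded Lipschitz domain (a square), every $\phi \in H^{1/2}(\partial \sq)$ admits an extension $\Phi \in H^1(\sq)$ with $\Phi_{|\partial\sq} = \phi$ and $\|\Phi\|_{H^1(\sq)} \lesssim \|\phi\|_{H^{1/2}(\partial \sq)}$. For $u \in L^2(\sq)$ with $\overline{\partial} u \in L^2(\sq)$, we set
\begin{equation*}
\langle u_{|\partial \sq}\, dz, \phi \rangle := 2i \left( \int_{\sq} \Phi \,\overline{\partial} u \, dA + \int_{\sq} (\overline{\partial} \Phi)\, u \, dA \right).
\end{equation*}
For $u$ smooth up to the boundary, \eqref{Eq:Cauchy-Green} with $f=\Phi$, $g=u$ shows that the right-hand side equals $\int_{\partial\sq} \phi\, u \, dz$. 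The Cauchy--Schwarz inequality then gives
\begin{equation*}
|\langle u_{|\partial \sq}\, dz, \phi \rangle| \leq 2\left(\|\overline{\partial} u\|_{L^2} + \|u\|_{L^2}\right) \|\Phi\|_{H^1} \lesssim \left(\|u\|_{L^2(\sq)} + \|\overline{\partial} u\|_{L^2(\sq)}\right)\|\phi\|_{H^{1/2}(\partial\sq)}.
\end{equation*}

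The next step is to check that this definition makes sense. We must verify that it does not depend on the chosen extension $\Phi$. The difference of two extensions lies in $H^1_0(\sq)$, and for $\Psi \in H^1_0(\sq)$ the quantity $\int \Psi\,\overline{\partial}u + \int (\overline{\partial}\Psi)\, u$ vanishes. This is exactly the definition of the distributional derivative $\overline{\partial}u$ tested against $C_c^\infty$ functions, extended to $H^1_0$ by density, since both terms are continuous in $\Psi$ for the $H^1$ norm. Hence we obtain a continuous linear functional on $H^{1/2}(\partial\sq)$, that is, an element of $H^{-1/2}(\partial\sq)$. Along each side of the square, $dz$ is a fixed unit complex multiple of arclength, so dividing by it identifies $u_{|\partial\sq}$ itself as an element of $H^{-1/2}(\partial\sq)$, with the stated bound.

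To justify calling this a trace, we show that it agrees with the classical restriction. We prove that $C^\infty(\overline{\sq})$ is dense in $\{u \in L^2 : \overline{\partial}u \in L^2\}$ for the graph norm, by the standard argument on star-shaped Lipschitz domains. Since $\sq$ is star-shaped with respect to $0$, we dilate $u_\varepsilon(z) = u((1-\varepsilon) z)$, which is defined on a neighborhood of $\overline{\sq}$, and then mollify. Dilation commutes with $\overline{\partial}$ up to a factor, so we get convergence in the graph norm. The trace map is thus the continuous extension of the restriction map. In particular, for $u\in L^2_a(\sq)$ we have $\overline\partial u=0$ and the bound holds with $\|u\|_{L^2}$ alone.

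For $\sql$ and $\sqr$, the same argument applies verbatim. These are triangles, hence bounded Lipschitz and star-shaped (with respect to an interior point, e.g. $\mp L/3$), so the extension operator, the Green formula (valid in Lipschitz domains as recalled) and the dilation density argument all carry over. The only mildly delicate point is the density and extension-independence step; the rest is routine.
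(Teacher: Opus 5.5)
Apart from one step, your argument is correct, and it is the route the paper mentions as its ``other possibility'': define the boundary pairing through \eqref{Eq:Cauchy-Green} after extending the boundary datum to $H^1(\sq)$. The paper's main route instead goes through $\vec f=(\Re f,-\Im f)$ and the $H^{-1/2}$ normal and tangential traces of $L^2$ fields with $L^2$ divergence and curl. Your independence-of-extension and dilation-density arguments are sound and fill in what the paper leaves implicit. The step that fails is ``dividing by $dz$''. Write $dz=\tau\,ds$. The unit tangent $\tau$ is only piecewise constant and jumps at each vertex, and multiplication by such a function is not bounded on $H^{1/2}(\partial\sq)$. This is the same $H^{1/2}_{00}$ phenomenon the paper meets later. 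So $\phi\mapsto\overline{\tau}\phi$ does not preserve $H^{1/2}(\partial\sq)$, and your bound on $u\,dz$ does not transfer to $u$.

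This is not a mere technicality: for $u$ itself, paired with arclength, the claim is false. Take $u(z)=(z+L)^{-1}\bigl(\log\frac{R}{z+L}\bigr)^{-\beta}$ with $R>2L$, $\frac12<\beta\le 1$ and principal branches. With $\rho=|z+L|$ one has $|u|\le\rho^{-1}\log^{-\beta}(R/\rho)$, so $u\in L^2_a(\sq)$ because $2\beta>1$. On the two sides issuing from $-L$, $u=e^{\mp i\pi/4}\rho^{-1}\log^{-\beta}(R/\rho)\bigl(1+O(\log^{-1}(R/\rho))\bigr)$. In $u\tau$ the two leading terms have opposite signs, which is the cancellation Cauchy--Green sees. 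In $u$ they add up to $\sqrt2\,\rho^{-1}\log^{-\beta}(R/\rho)$.

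Now test with $\phi_n=\chi(\rho)(1-\eta_n(\rho))$, where $\chi$ is a cut-off equal to $1$ near $0$ and $\eta_n$ are the logarithmic cut-offs showing that a point has zero $H^{1/2}$-capacity on a curve. Then $\phi_n\to\chi(\rho)$ in $H^{1/2}(\partial\sq)$, but $\bigl|\int_{\partial\sq}u\,\phi_n\,ds\bigr|\ge\sqrt2\int_C^n s^{-\beta}\,ds-C\to+\infty$. Any trace obtained by continuity from smooth functions equals $u$ on $\partial\sq\setminus\{-L\}$, and $C^\infty_c(\partial\sq\setminus\{-L\})$ is dense in $H^{1/2}(\partial\sq)$. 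Hence $u$ has no trace in $(H^{1/2}(\partial\sq))'$, and the same holds on $\sql$.

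What your argument proves is that $u\,dz$, i.e.\ $u\tau$, has a trace in $H^{-1/2}(\partial\sq)$ with the stated bound. The paper's argument gives the same object, since its formula produces $\vec f\cdot\tau+i\,\vec f\cdot n=f\tau$. On each side $\Gamma_j$ separately, dividing by the constant $\tau_j$ does give $u_{|\Gamma_j}\in(H^{1/2}_{00}(\Gamma_j))'$. The $u\,dz$ statement is all the paper uses, since in the estimate of $\II''$ the function $e^{-\lambda}\overline{v}$ is only paired against $d\overline{z}$. So drop the division step and state the conclusion for $u\,dz$.
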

\begin{proof}[Proof of Lemma~\ref{Lem:Traces-Bergman}]
A way to see this is to use the correspondence between a complex-valued function $f$ on $\sq$ 
and the vector field on $\sq$:
\begin{equation*}
\vec{f} := \begin{pmatrix}
    \Re(f) \\ -\Im(f)
\end{pmatrix}.
\end{equation*}
Then we have the correspondence:
\begin{equation*}
2 \overline{\partial} f = \div\left(\vec{f}\, \right) - i \curl\left(\vec{f}\, \right)
\ \text{ and } \
\int_{\partial\sq} f(z)\, dz 
= \int_{\partial\sq} \left[ (\vec{f}\cdot\tau) + i(\vec{f}\cdot n)  \right] \, ds.
\end{equation*}
\par
\noindent
It is classical that divergence-free $L^2$ vector fields admit $H^{-1/2}$ normal traces, 
while curl-free $L^2$ vector fields admit $H^{-1/2}$ tangent traces (by mere rotation).
This fact is true in Lipschitz domains, see e.g. 
\cite[Chapter IV, Sections 3.2 and 4.2]{BoyerFabrie2012}. 
Now the case when the divergence and the curl are not zero but a function in $L^2(\sq)$ 
can be treated by the same proof: see formulae \cite[(IV.10) and (IV.13)]{BoyerFabrie2012}.
The conclusion follows. Another possibility is to rely on \eqref{Eq:Cauchy-Green}
which gives meaning to 
$\left\langle u, \, \Phi \right\rangle_{H^{-1/2}(\partial\sq) 
    \times H^{1/2}(\partial\sq)}$ 
by extension of $\Phi$ in $H^1(\sq)$, for any $\Phi$ in $H^{1/2}(\partial\sq)$.
\par
\end{proof}
%
%
%
%
%
\subsection{Proof of Proposition~\ref{Prop:Observability}}
We now proceed to the proof of Proposition~\ref{Prop:Observability} itself. 
We fix $L>0$ and $T>0$. \par
%
%
%
%
\noindent
\subsubsection{First step.} 
We let $v \in \Hsq$, which as previously mentioned is not different from
$L^{2}_a\left( \sq \right)$ as a vector space and has an equivalent norm for fixed $T$.
Due to the density of polynomials in $L^{2}_a\left(\sq\right)$ (see
Carleman \cite{carleman1927}), it is sufficient to prove \eqref{Eq:Observability} 
when $v$ is smooth. 
Since
\begin{equation*} 
\| \mathcal{T}^*(v) \|^2_{L^{2}(D_1) \times L^{2}(D_2)}
= \left\| B_1 \circ E_1\left[ e^{-2\lambda - \overline{\varphi}_1} \, v \right] 
                        \right\|^2_{L^{2}_a \left(D_1\right)}
+ \left\| B_2 \circ E_2\left[ e^{-2\lambda - \overline{\varphi}_2} \, v \right] 
                        \right\|^2_{L^{2}_a \left(D_2\right)},
\end{equation*}
our goal is to prove that for some constant $C_1>0$,
one has for $v \in \Hsq \cap C^{\infty}(\overline{\sq})$:
\begin{equation} \label{Eq:Central-Goal}
\| v \|_{\Hsq } \leq \frac{C_1}{T^2} \exp\left( \kappa_\star \frac{{L}^{2}}{T} \right)
\left(
\left\| B_1 \circ E_1\left[ e^{-2\lambda - \overline{\varphi_1}} \, v \right] 
    \right\|_{L^{2}_a(D_1)}
+ \left\| B_2 \circ E_2\left[ e^{-2\lambda - \overline{\varphi_2}} \, v \right] 
    \right\|_{L^{2}_a(D_2)}
\right).
\end{equation}
%
%
%
%
%
%
\subsubsection{Introduction of the $\A$ term.}
For $k=1,2$, we introduce, using Lemma~\ref{Lem:Poisson},
\begin{equation*}
\alpha_k := 4 \Delta^{-1}_{D_k} \Big[ 
    \overline{\partial} \, E_k[e^{-2\lambda - \overline{\varphi_k}} \,  v] 
\Big] 
\ \text{ in } \ D_k.
\end{equation*}
Note that as a consequence of Lemma~\ref{Lem:Bergman-on-Di}, one has
\begin{equation} \label{Eq:Decomp-v-i}
E_k \left[e^{-2\lambda - \overline{\varphi_k}} \, v\right] =
B_k \circ E_k\left[ e^{-2\lambda - \overline{\varphi_k}} \, v \right]
+ \partial\alpha_k
\ \text{ in } \ D_k.
\end{equation}
We have the following estimate for the functions $\alpha_k$.
\begin{lemma} \label{Lem:Est-Asymp-alphai}
For some constant $K>0$ depending only on $L$, $T$ and $v$ one has for $k=1,2$,
\begin{equation} \label{Eq:Est-Asymp-alphai}
|\alpha_k(z)| \leq \frac{K}{|z|^2} 
    \text{ for } z \in D_k \text{ with } |z| \geq 4L.
\end{equation}
\end{lemma}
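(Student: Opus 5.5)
Here $\alpha_k$ solves $\Delta\alpha_k = 4\overline{\partial} f_k$ in $D_k$ with $\alpha_k = 0$ on $\partial D_k$. The source is $f_k := E_k[e^{-2\lambda-\overline{\varphi_k}}v]$, which vanishes outside $\sq$. So $\overline{\partial} f_k$ is a distribution supported in $\overline{\sq}$, a compact set contained in the ball $B(0,\sqrt2 L)$ (the vertices $\pm L$, $\pm iL$ of $\sq$ are at distance $L$ from the origin). The plan is to exploit that outside this support $\alpha_k$ is harmonic in the unbounded sector $D_k$, vanishes on its boundary, and has finite Dirichlet energy. I will then show such a function decays like $|z|^{-2}$, using an explicit conformal change of variables. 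I treat $D_1$; the case $D_2$ is symmetric under $z\mapsto -z$.

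\textbf{Step 1: reduction to a quadrant and Kelvin-type inversion.} Write $w = (z+L)e^{i\pi/4}$, which maps $D_1$ onto a quadrant with vertex at $0$. For $|z|\ge 4L$ we have $|w|\ge 3L$, while the support of the source lies in $\{|w|\le (1+\sqrt2)L\}\subset\{|w|<3L\}$. Next set $\zeta = w^2$, mapping the quadrant onto the right (or upper) half-plane $P$. The function $h(\zeta):=\alpha_1(z)$ is then harmonic in $P\cap\{|\zeta|>R_0\}$ with $R_0=((1+\sqrt2)L)^2<9L^2$, and vanishes on $\partial P$. Dirichlet energy is conformally invariant, so $\nabla h\in L^2$.

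\textbf{Step 2: expansion at infinity.} Apply the inversion $\eta = -1/\zeta$, which preserves the half-plane. Then $H(\eta)=h(\zeta)$ is harmonic in the half-disc $P\cap\{|\eta|<1/R_0\}$, vanishes on the flat part of its boundary, and has finite Dirichlet energy. By Schwarz reflection (odd extension across the line), $H$ extends to a harmonic function on the punctured disc with finite Dirichlet energy. A punctured-disc harmonic function with finite energy has no $\log$ term and no negative Fourier modes, since $r^{-n}$ and $\log r$ have infinite energy near $0$. Hence $H$ extends harmonically to the full disc. Being odd with respect to the line, it satisfies $H(0)=0$ and $|H(\eta)|\le C|\eta|$ on $|\eta|\le 1/(2R_0)$. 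Choosing $R_0$ slightly smaller if necessary covers the range $|\zeta|\ge 9L^2\cdot$const; the finitely many remaining points with $|\zeta|$ in a compact range are handled by continuity, since $\alpha_1$ is harmonic, hence continuous, up to the boundary away from the source.

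\textbf{Step 3: conclusion.} Undoing the changes of variables, $|\alpha_1(z)|\le C|\eta| = C/|\zeta| = C/|w|^2 = C/|z+L|^2 \le K/|z|^2$ for $|z|\ge 4L$, since $|z+L|\ge \tfrac34|z|$ there. The constant $K$ depends on the energy of $\alpha_1$, hence on $L$, $T$ and $v$ through Lemma~\ref{Lem:Poisson}. This is allowed by the statement.

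The main obstacle is the removable-singularity argument in Step 2: I must check carefully that finite Dirichlet energy rules out the $\log|\eta|$ mode and the singular modes. I would do this by the Fourier expansion on circles $\int_0^{2\pi}|\partial_r H|^2 + r^{-2}|\partial_\theta H|^2$. A secondary subtlety is the boundary regularity needed for the reflection, which follows from $\alpha_1\in H^1_{loc}$ with zero trace via the weak reflection principle.
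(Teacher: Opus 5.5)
Your proof is correct. It rests on the same basic ideas as the paper's (reflection, plus finite energy controlling the behaviour at infinity), but the mechanics are genuinely different.

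\textbf{The paper's route.} The paper moves $D_1$ onto the quadrant $Q$ by a rigid motion. It reflects oddly across both axes to get a function $\tilde\alpha_1$ harmonic on $\C$ outside a compact square. It then expands the holomorphic function $\partial\tilde\alpha_1$ in a Laurent series at infinity:
\begin{itemize}
\item square integrability kills the powers $z^{-k}$ with $k\le 1$;
\item evenness of $\tilde\alpha_1$, hence oddness of $\partial\tilde\alpha_1$, kills $z^{-2}$.
\end{itemize}
The same is done for $\overline{\partial}\alpha_1$ via conjugation. This gives $|\nabla\alpha_1|\le K|z|^{-3}$, and the bound on $\alpha_1$ follows by integrating along arcs starting from the boundary.

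\textbf{Your route.} You unfold the quadrant with $\zeta=w^2$ and invert with $\eta=-1/\zeta$, so that infinity becomes an isolated boundary point of a half-disc. A single odd reflection, plus the removable-singularity property of finite-energy harmonic functions, then yields a harmonic $H$ on a full disc, and oddness gives $H(0)=0$. Your Fourier check is the right one: the $\log r$ and $r^{-|n|}$ modes have infinite energy near $0$, and the cross terms within each mode cancel.

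\textbf{What each buys.} In your version the squaring map builds the $|z|^{-2}$ rate into the change of variables. The paper's parity argument for the $z^{-2}$ coefficient is replaced by the trivial $H(0)=0$. You estimate $\alpha_1$ directly rather than its gradient. It also becomes transparent, via interior gradient bounds for harmonic functions, that $K$ is controlled by $\|\nabla\alpha_1\|_{L^2(D_1)}$. The price is invoking conformal invariance of the Dirichlet integral and the weak reflection principle, both of which you correctly flag. The paper stays in the original geometry and works with a holomorphic function on the exterior of a compact set, where the $L^2$ criterion on Laurent coefficients is immediate.

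\textbf{A small clean-up at the end of Step 2.} You cannot make $R_0$ smaller, since it is dictated by the support of the source. Also, the leftover set is a compact region, not finitely many points. Neither patch is needed:
\begin{itemize}
\item Since $R_0=(1+\sqrt2)^2L^2<9L^2$, the closed disc $\{|\eta|\le 1/(9L^2)\}$ is compactly contained in the disc of harmonicity $\{|\eta|<1/R_0\}$.
\item So the Lipschitz bound $|H(\eta)|\le C|\eta|$ holds on it directly.
\item This disc corresponds exactly to $|w|\ge 3L$, which contains the images of all $z\in D_1$ with $|z|\ge 4L$.
\end{itemize}
Incidentally, $\overline{\sq}\subset\overline{B}(0,L)$, so the source actually lies in $\{|w|\le 2L\}$, but your cruder bound suffices.
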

\noindent
Lemma~\ref{Lem:Est-Asymp-alphai} is proved in 
Subsection~\ref{Subsec:Proof-Poisson-D}. \par
\ \par
Now we extend functions defined on $D_k$ (such as $\alpha_k$ or $B_k[w]$ for $w \in L^2(D_k)$)
or on $\sq$ (such as $v$) by $0$ in the rest of $\Omega$, and put a hat to indicate it:
\begin{eqnarray*}
\widehat{\alpha}_k(z) = \alpha_k(z) \ \text{ in } \ D_k
\ \text{ and } \ 
\widehat{\alpha}_k(z) = 0 \ \text{ in } \ \Omega \setminus D_k, \\
\widehat{B}_k[w](z) = {B}_k[w](z) \ \text{ in } \ D_k
\ \text{ and } \ 
\widehat{B}_k[w](z) = 0 \ \text{ in } \ \Omega \setminus D_k, \\
\widehat{v}(z) = v(z) \ \text{ in } \ \sq \ \text{ and } \ 
\widehat{v}(z) = 0 \ \text{ in } \ \Omega \setminus \sq.
\end{eqnarray*}
With this notation, we do no longer need the extension operator $E_k$ to define
$B_k[e^{-2\lambda - \overline{\varphi_k}} \, \widehat{v}]$; rather we should restrict
the function inside to $D_k$. This will be implicit in the sequel. \par
Note that, since for $k=1,2$, $\alpha_k$ belongs to $\dot{H}^{1}_{0}(D_k)$ 
(see Lemma~\ref{Lem:Poisson}), it follows that
$\widehat{\alpha}_1$ and $\widehat{\alpha}_2$ both belong to $\dot{H}^{1}_{0}(\Omega)$
and satisfy 
\begin{equation*}
\partial_z \widehat{\alpha}_k =
\left\{ \begin{array}{ll}
\partial_z {\alpha}_k & \ \text{ in } \ D_k, \\
0 & \ \text{ in } \ \Omega \setminus D_k.
\end{array} \right. 
\end{equation*}
They even belong to $H^1_0(\Omega)$ thanks to Lemma~\ref{Lem:Est-Asymp-alphai}.
On the other hand, $\hat{v}$ merely belongs to $L^2(\Omega)$. \par
\ \par
Next, we introduce
\begin{equation*}
\A(z) := \exp\left(\overline{\varphi}_1 \right) \, \widehat{\alpha}_1(z) 
        - \exp\left( \overline{\varphi_2} \right) \, \widehat{\alpha}_2(z)
\ \ \text{ in } \ \Omega.
\end{equation*}
Since $z \mapsto \exp\left(\overline{\varphi}_1 \right)$ 
and $z \mapsto \exp\left( \overline{\varphi_2} \right)$ 
are antiholomorphic, we see that for $z \in \Omega$,
\begin{equation} \nonumber 
\partial \A(z) 
= \exp\left(\overline{\varphi}_1 \right) \, \partial \widehat{\alpha}_1(z) 
        - \exp\left( \overline{\varphi_2} \right) \, \partial \widehat{\alpha}_2(z).
\end{equation}
Now for $k=1,2$, we decompose $e^{-2\lambda - \overline{\varphi_k}} \, \widehat{v}$ 
in $\Omega$, using \eqref{Eq:Decomp-v-i} and the above definitions:
\begin{equation} \label{Eq:Decomp-v}
e^{-2\lambda - \overline{\varphi_k}} \, \widehat{v}
 = \widehat{B}_k \left[e^{-2\lambda - \overline{\varphi_k}} \, \widehat{v} \right] 
    + \partial \widehat{\alpha}_k
\ \ \text{ in } \ \Omega.
\end{equation}
Multiplying \eqref{Eq:Decomp-v} 
by $\exp\left(\overline{\varphi}_1 \right)$ (for $k=1$)
and by $\exp\left(\overline{\varphi_2}\right)$ (for $k=2$) 
and subtracting the results, we get
\begin{multline} \label{Eq:MainEqA}
\partial \A(z) 
= \Ba(z) \text{ in } \Omega, \\
\text{ with }
\Ba(z) :=- \exp\left(\overline{\varphi}_1 \right) 
    \widehat{B}_1 [e^{-2\lambda - \overline{\varphi}_1} \widehat{v}](z) 
 + \exp\left( \overline{\varphi_2} \right) 
    \widehat{B}_2 [e^{-2\lambda - \overline{\varphi}_2}\widehat{v}](z).
\end{multline} \par
The next two paragraphs aim at estimating $\mathfrak{a}$ based on this equation.
%
%
%
%
%
%
\subsubsection{The Dardé-Ervedoza multiplier.} 
\noindent
Applying $\overline{\partial}$ to \eqref{Eq:MainEqA}, we observe that
$u=\A$ satisfies \eqref{Eq:Poisson} with $F$ given by $4\Ba$.
But the problem is that $F$ does not necessarily belong to $L^2(\Omega)$; 
it can actually grow like the inverse of a Gaussian at infinity!
To address this issue, we will use a complex multiplier introduced by
Dardé and Ervedoza \cite{Darde-Ervedoza-2019}, 
and described in the lemma below.
Note that our domain is rotated by $\pi/2$ with respect to the one 
of \cite[Proposition 2.3]{Darde-Ervedoza-2019}, 
but the adaptation is straightforward.
\begin{lemma} \label{Lem:Dardé-Ervedoza}
Denote $\Omega_1$ the domain $\Omega$ of \eqref{Def:Omega_and_square} 
obtained for $L=1$. There exists a unique function $\tilde{\varphi}$ satisfying
\begin{equation} \label{Eq:Sys-Phi}
\left\{ \begin{array}{l}
\Delta \tilde{\varphi} = -2\delta_{\mathcal{M}} \ \text{ in } \ \Omega_1, \\
\tilde{\varphi} = 0 \ \text{ on } \ \partial \Omega_1, \\
\displaystyle 
\lim_{|a| \rightarrow +\infty} \sup_{b \in (-|a|-1,|a|+1)} |\tilde{\varphi}(a,b)| =0.
\end{array} \right. 
\end{equation}
Besides, the maximum of $\tilde{\varphi}$ is attained in $0$:
\begin{equation} \label{Eq:MaxPhi}
\max_{\Omega_1} \widetilde{\varphi} = \tilde{\varphi}(0) =  
\frac{2}{\pi} \frac{\displaystyle\int_{0}^{\pi/2} 
    \ln\left(\cot\left(\frac{t}{2}\right)\right) \sqrt{\cos(t)} \, dt}
{\displaystyle \int_{0}^{\pi/2} \sqrt{\cos(t)} \, dt} 
= \frac{\Gamma(1/4)^2}{4\sqrt{2} \pi^2}\sum_{n\in \N} 
    \frac{(-1)^n}{2n+1}\frac{\Gamma(n+1/4)}{\Gamma(n+7/4)}.
\end{equation}
\end{lemma}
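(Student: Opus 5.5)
The plan has three parts: construct $\tilde\varphi$ as a Green potential of the segment, prove uniqueness and decay by a maximum principle, and then locate and compute the maximum.

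\textbf{Existence and uniqueness.} We take $\tilde\varphi := 2\int_{\mathcal{M}} G_{\Omega_1}(\cdot,w)\,|dw|$, where $G_{\Omega_1}$ is the Dirichlet Green function of $\Omega_1$ (normalised so that $-\Delta G = \delta$). This is the potential of the uniform measure of density $2$ on the segment $\mathcal{M}=[-i,i]$.

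Since $\Omega_1$ is simply connected, it is more convenient to produce $\tilde\varphi$ through a conformal map.
\begin{itemize}
\item Let $\Psi:\Omega_1\to\mathbb{D}$ be a Riemann map. Then $G_{\Omega_1}(z,w)=\frac{1}{2\pi}\ln\bigl|\frac{1-\overline{\Psi(w)}\Psi(z)}{\Psi(z)-\Psi(w)}\bigr|$. Because $\Omega_1$ is symmetric under $z\mapsto -z$ and $z\mapsto\bar z$, we can choose $\Psi(0)=0$ with $\Psi$ odd, real on $\R$, and mapping $i\R\cap\Omega_1$ into $i\R$.
\item $\tilde\varphi$ is positive, harmonic off $\mathcal{M}$, and vanishes on $\partial\Omega_1$. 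Here $\partial\Omega_1$ is Lipschitz except at the two reentrant corners $\pm i$, which lie at the endpoints of $\mathcal{M}$. The logarithmic singularity of $G$ is integrable along a segment, so $\tilde\varphi$ is continuous up to those endpoints.
\item Uniqueness follows from the maximum principle. The difference of two solutions is harmonic in $\Omega_1$ (the delta masses cancel), vanishes on $\partial\Omega_1$, and tends to $0$ as $|a|\to\infty$ uniformly in the strip condition of \eqref{Eq:Sys-Phi}. A harmonic function with these properties vanishes identically.
\end{itemize}

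\textbf{Decay at infinity.} This is a barrier argument in each sector. For $a\geq 2$, the domain is the sector $\{|b|<a+1\}$, of opening $\pi/2$, and $\tilde\varphi$ is bounded and harmonic there with zero boundary data. We compare it with $C\,\Re\bigl((z+1)^{-2}\bigr)$, the decaying harmonic function adapted to an opening $\pi/2$; this gives $\tilde\varphi=O(|z|^{-2})$. The same argument applies for $a\leq -2$.

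\textbf{Maximum at $0$ and its value.} Reflection symmetry gives $\tilde\varphi(\bar z)=\tilde\varphi(z)=\tilde\varphi(-z)$. Off $\mathcal{M}$ the function is harmonic, so its maximum lies on $\mathcal{M}$, and it remains to show that $b\mapsto\tilde\varphi(ib)$ is maximal at $b=0$.

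For this we map the slit geometry explicitly. The restriction of $\tilde\varphi$ to $\Omega_1\cap\{\Re z>0\}$ is harmonic, equal to $0$ on the two outer edges, with normal-derivative jump $-2$ across $\mathcal{M}$; by symmetry, $\partial_x\tilde\varphi=-1$ on $\mathcal{M}$ from the right. So on the half-domain $\sqr$-type region (a sector with its tip cut by $\mathcal{M}$) we have a mixed problem: Neumann data $-1$ on $\mathcal{M}$ and Dirichlet data $0$ on the sides.

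The map $\zeta=(z+1)^2$, or more precisely a Schwarz–Christoffel map whose derivative is proportional to $\sqrt{\cos}$, unfolds this region to a half-plane or strip. The integrals $\int_0^{\pi/2}\sqrt{\cos t}\,dt$ in \eqref{Eq:MaxPhi} are exactly the Schwarz–Christoffel normalising constants, and $\ln\cot(t/2)$ is the half-plane Green kernel written in the angular variable. Parametrising $\mathcal{M}$ by $t$, $\tilde\varphi$ becomes the convolution of the (positive) Neumann density with $\ln\cot$. Evaluating at the symmetric point gives the first formula in \eqref{Eq:MaxPhi}. Monotonicity in $|b|$ follows because $\ln\cot$ is decreasing and the density is symmetric and decreasing away from the centre.

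The Gamma-series form comes from expanding $\ln\cot(t/2)=2\sum_{n}\frac{\cos((2n+1)t)}{2n+1}$ and integrating each term against $\sqrt{\cos t}$ using Beta-function identities. Since the paper says the adaptation from \cite[Proposition 2.3]{Darde-Ervedoza-2019} is straightforward, we would rely on that computation and only check the rotation by $\pi/2$. \textbf{Main obstacle:} the conformal mapping computation that establishes the location and value of the maximum.
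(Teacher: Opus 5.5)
Your plan ends where the paper's does. The paper gives no argument of its own for this lemma: it imports \cite[Proposition 2.3]{Darde-Ervedoza-2019} after a rotation by $\pi/2$, which is also your fallback for the value. Most of the extra material you supply is sound: the Green potential of $2\,ds$ on $\mathcal{M}$, the barrier $\Re((z+1)^{-2})$ with Phragm\'en--Lindel\"of in a sector of opening $\pi/2$, and the Fourier expansion of $\ln\cot(t/2)$. The latter, together with $\int_0^{\pi/2}\cos((2n+1)t)\sqrt{\cos t}\,dt=(-1)^n\frac{\sqrt{\pi}}{8}\frac{\Gamma(n+1/4)}{\Gamma(n+7/4)}$, does give the Gamma series. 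One precision is needed for uniqueness. The maximum principle must be applied in a class that controls the reentrant corners $\pm i$, for instance continuity up to $\pm i$, or $H^1_{loc}$ there. Otherwise the Martin kernel of $\Omega_1$ at $i$ is a counterexample: it is harmonic, nonzero, vanishes on $\partial\Omega_1\setminus\{i\}$, decays at infinity, and is of order $|z-i|^{-2/3}$ near $i$.

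The step that fails is the one you argue on your own, namely the monotonicity of $b\mapsto\tilde\varphi(ib)$. Carry out your reduction precisely: take the Schwarz--Christoffel map $F'(w)=C(w^2-1)^{-1/4}$ from the upper half-plane onto $\Omega_1\cap\{\Re z>0\}$, then set $w=\cos\tau$, then reflect evenly across the Neumann edge. Put $\rho(\sigma)=2C\sqrt{\cos\sigma}$ and $C^{-1}=\int_0^{\pi/2}\sqrt{\cos t}\,dt$. The value of $\tilde\varphi$ at the point of $\mathcal{M}$ with parameter $\theta\in(-\pi/2,\pi/2)$ is then
\[
\frac{1}{2\pi}\int_{-\pi/2}^{\pi/2}\ln\left|\frac{\cos((\theta+\sigma)/2)}{\sin((\theta-\sigma)/2)}\right|\rho(\sigma)\,d\sigma .
\]
This is the Dirichlet Green kernel of a strip (method of images), not a convolution kernel; $\ln|\cot(\sigma/2)|$ is only its value at $\theta=0$. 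Evaluating at $\theta=0$ does give the first formula in \eqref{Eq:MaxPhi}. But the argument ``decreasing kernel against a symmetric decreasing density'' says nothing about $\theta\neq0$.

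One repair is to write the kernel as $k(\theta-\sigma)-k(\theta+\sigma+\pi)$ with $k(u)=-\ln|\sin(u/2)|$, and use that $k*\rho$ is symmetric decreasing on the circle (layer-cake reduces this to arcs). A moving-plane argument is simpler and needs no conformal map at all. For $\mu>0$, the reflection $(x,y)\mapsto(x,2\mu-y)$ maps $\Omega_1\cap\{y>\mu\}$ into $\Omega_1$, and the reflected source coincides with $2\delta_{\mathcal{M}}$ on that set. So $w(x,y)=\tilde\varphi(x,2\mu-y)-\tilde\varphi(x,y)$ is harmonic there. It vanishes on $\{y=\mu\}$, is nonnegative on $\partial\Omega_1$, and tends to $0$ at infinity, hence $w\geq 0$. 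Therefore $\tilde\varphi$ is nonincreasing in $|y|$. Combined with your observation that the maximum lies on $\mathcal{M}$, this gives $\max\tilde\varphi=\tilde\varphi(0)$.
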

\noindent
Notice that due to the weak maximum principle, we have
\begin{equation} \label{Eq:TildePhiPos}
\tilde{\varphi} \geq 0 \text{ in } \Omega_1.
\end{equation} 
The constant given in \eqref{Eq:MaxPhi} has been recently put in simpler form
by Lissy \cite{lissy2026optimal}:
\begin{lemma}(\cite[Theorem 3.1]{lissy2026optimal})
One has the equality
\begin{equation*} 
\frac{1}{4} + \frac{\tilde{\varphi}(0)}{2} 
= \frac{\Gamma(1/4)^2}{4\pi\Gamma(3/4)^2}
= \frac{\Gamma(1/4)^4}{8\pi^3} = \kappa_\star.
\end{equation*}
\end{lemma}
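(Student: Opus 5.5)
The second equality is elementary. By the reflection formula $\Gamma(1/4)\Gamma(3/4)=\pi/\sin(\pi/4)=\pi\sqrt2$ we have $\Gamma(3/4)^2=2\pi^2/\Gamma(1/4)^2$. Substituting this gives $\Gamma(1/4)^2/(4\pi\Gamma(3/4)^2)=\Gamma(1/4)^4/(8\pi^3)$.

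For the first equality I would start from the integral expression \eqref{Eq:MaxPhi} for $\tilde\varphi(0)$ and reduce everything to Beta integrals. Write $I=\int_0^{\pi/2}\sqrt{\cos t}\,dt$ and $J=\int_0^{\pi/2}\ln(\cot(t/2))\sqrt{\cos t}\,dt$, so that $\tilde\varphi(0)=2J/(\pi I)$. Then:
\begin{itemize}
\item The Beta function gives $I=\tfrac12 B(3/4,1/2)=2\sqrt\pi\,\Gamma(3/4)/\Gamma(1/4)$.
\item For $J$, observe that $\cot(t/2)=(1+\cos t)/\sin t$, hence $\ln\cot(t/2)=\operatorname{artanh}(\cos t)=\sum_{n\ge0}\cos^{2n+1}t/(2n+1)$.
\item Integrating term by term (monotone convergence, all terms nonnegative) gives $J=\sum_n \frac{1}{2(2n+1)}B(n+\tfrac54,\tfrac12)$. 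This reproduces, after the standard Gamma manipulations, a series of the type appearing on the right of \eqref{Eq:MaxPhi}.
\end{itemize}
So it suffices to evaluate $S=\sum_{n\ge0}\frac{(-1)^n}{2n+1}\frac{\Gamma(n+1/4)}{\Gamma(n+7/4)}$ (or directly the positive series for $J$) in closed form.

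To sum $S$, write $\frac1{2n+1}=\int_0^1x^{2n}\,dx$ and $\frac{\Gamma(n+1/4)}{\Gamma(n+7/4)}=\frac{1}{\Gamma(3/2)}\int_0^1 s^{n-3/4}(1-s)^{1/2}\,ds$. Interchanging sum and integrals (justified by Abel summation, or by dominated convergence on $[0,1)^2$) gives
\begin{equation*}
S=\frac{1}{\Gamma(3/2)}\int_0^1\int_0^1\frac{s^{-3/4}(1-s)^{1/2}}{1+x^2s}\,dx\,ds
=\frac{1}{\Gamma(3/2)}\int_0^1 s^{-5/4}(1-s)^{1/2}\arctan(\sqrt s)\,ds .
\end{equation*}
The remaining one-dimensional integral is the main obstacle.

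My first attempt would be to write $\arctan\sqrt s=\int_0^1\frac{\sqrt s}{1+sy^2}dy$ and swap the order of integration. This produces integrals $\int_0^1 s^{-3/4}(1-s)^{1/2}(1+sy^2)^{-1}ds$, which are ${}_2F_1$ values at $-y^2$. After the substitution $s=\sin^2\theta$, or $y=\tan\psi$, these should integrate to elementary functions times Beta values. A Clausen/Dixon-type identity (a well-poised ${}_3F_2$ at $-1$) would then yield the closed form $S\propto\Gamma(1/4)^2/\Gamma(3/4)^2-\pi$ with the right constants.

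Once $S$ is known, plugging in $I$ and checking that $\tfrac14+\tfrac{J}{\pi I}$ equals $\Gamma(1/4)^2/(4\pi\Gamma(3/4)^2)$ is routine Gamma algebra. The constant $\tfrac14$ should come from the $-\pi$ term in $S$. As a fallback, I would compare numerically with $\kappa_\star\simeq0.6966$ to fix signs and constants.
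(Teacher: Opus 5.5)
Your preliminary steps are correct: the reflection-formula computation for the second equality, $I=2\sqrt\pi\,\Gamma(3/4)/\Gamma(1/4)$, the identity $\ln\cot(t/2)=\operatorname{artanh}(\cos t)$ with termwise integration, and the integral representation of $S$. However, the first equality \emph{is} the closed-form evaluation of this series, and your proposal never carries that evaluation out. (The paper gives no proof here; it cites Lissy.) The route you sketch does not work as described, for four reasons.
First, writing $\arctan\sqrt s=\int_0^1\sqrt s\,(1+sy^2)^{-1}dy$ and swapping integrals just reproduces the double integral you had one line earlier, with $x$ renamed $y$.
Second, the inner integrals $\int_0^1 s^{-3/4}(1-s)^{1/2}(1+sy^2)^{-1}ds=4\int_0^1\sqrt{1-u^4}\,(1+y^2u^4)^{-1}du$ are elliptic, not elementary, functions of $y$.
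Third, no Dixon- or Clausen-type identity applies. Dixon's theorem sums a well-poised ${}_3F_2$ at argument $+1$, and Clausen's formula concerns squares of ${}_2F_1$. Neither evaluates $S=\frac{\Gamma(1/4)}{\Gamma(7/4)}\,{}_3F_2(1,\frac12,\frac14;\frac32,\frac74;-1)$.
Fourth, the announced shape $S\propto\Gamma(1/4)^2/\Gamma(3/4)^2-\pi$ is read off from the target rather than derived, and a numerical comparison cannot fix constants in a proof.
A minor point: your positive series for $J$ is not the series of \eqref{Eq:MaxPhi} after Gamma manipulations. The two are related by Whipple's quadratic transformation, which gives $S=2P$ with $P$ defined below.

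The gap can be closed elementarily if you work with the positive series. Your computation gives $J=\frac{\sqrt\pi}{2}P$ with $P=\sum_{n\ge0}\frac{1}{2n+1}\frac{\Gamma(n+5/4)}{\Gamma(n+7/4)}$. Write $\frac{1}{2n+1}=\frac12\int_0^1x^{n-1/2}\,dx$ and sum to ${}_2F_1(1,\frac54;\frac74;x)=\frac34x^{-3/4}(1-x)^{-1/2}\int_0^xt^{-1/4}(1-t)^{-1/2}\,dt$. Substituting $x=u^4$ and $t=v^4$ then gives
\[
P=\frac{2\Gamma(1/4)}{\Gamma(3/4)}\int_0^1\frac{W(u)}{u^2\sqrt{1-u^4}}\,du,
\qquad W(u):=\int_0^u\frac{v^2\,dv}{\sqrt{1-v^4}}.
\]
The key identity is $\frac{1}{u^2\sqrt{1-u^4}}=\frac{d}{du}\Bigl(-\frac{\sqrt{1-u^4}}{u}\Bigr)-W'(u)$. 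One integration by parts, in which the boundary terms vanish, yields $\int_0^1\frac{W(u)}{u^2\sqrt{1-u^4}}\,du=\frac12-\frac12W(1)^2$, where $W(1)=\frac14B(\frac34,\frac12)=\sqrt\pi\,\Gamma(3/4)/\Gamma(1/4)$. Hence $P=\frac{\Gamma(1/4)}{\Gamma(3/4)}-\frac{\pi\Gamma(3/4)}{\Gamma(1/4)}$, and
\[
\tilde\varphi(0)=\frac{2J}{\pi I}=\frac{\Gamma(1/4)}{2\pi\Gamma(3/4)}P=\frac{\Gamma(1/4)^2}{2\pi\Gamma(3/4)^2}-\frac12 ,
\]
which is the first equality. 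The $\frac14$ is indeed cancelled by the contribution of the $W(1)^2$ term.
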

\noindent
Now, following \cite[Section 3.2.2]{Darde-Ervedoza-2019}, we observe that 
\begin{equation} \nonumber 
(x,y) \in \Omega_1 \mapsto \tilde{\varphi}(x,y) + |x|,
\end{equation}
is harmonic in $\Omega_1$, and therefore equals the real part of a holomorphic function $\phi$:
\begin{equation} \label{Eq:PropPhi}
\phi \in \mathcal{H}(\Omega_1) \ \text{ such that } \Re(\phi(z)) = \tilde{\varphi}(z) + |\Re(z)|
\text{ in } \Omega_1.
\end{equation}
The following additional property of $\phi$ is proved in Subsection~\ref{Subsec:WeightHolderEstimate}.
\begin{lemma} \label{Lem:PhiHolder}
The function $\phi$ belongs to 
$C^{\infty}(\overline{\Omega_1} \setminus \{i,-i\}) \cap C^{2/3}_{loc}(\overline{\Omega_1})$ 
and satisfies
\begin{equation}
\label{Eq:PhiAuxCoins}
\nabla \phi(z) = \mathcal{O}(1/|z \pm i|^{1/3}) \text{ as } z \rightarrow \pm i 
\ \text{ in } \ \Omega_1.
\end{equation}
\end{lemma}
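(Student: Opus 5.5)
The plan is to reduce the regularity of $\phi$ to the regularity of the harmonic function $\tilde{\varphi}+|\Re z|$ near $\partial\Omega_1$, and then to use local conformal maps at the corners. Write $\psi:=\tilde\varphi+|x|$, which is harmonic in $\Omega_1$ (the jump of $\partial_x|x|$ across $\mathcal{M}$ exactly compensates $-2\delta_{\mathcal{M}}$). Since $\phi'=\partial_x\psi-i\partial_y\psi=2\partial\psi$, it suffices to prove the corresponding statements for $\nabla\psi$. On $\partial\Omega_1$ we have $\psi=|x|$, which is smooth (in fact affine) on each boundary segment except at the reentrant corners $\pm i$, where $\mathcal{M}$ meets $\partial\Omega_1$.

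\textbf{Away from $\pm i$.} At a point of a straight boundary segment, the function $\psi-|x|$ (with $|x|=\pm x$ harmonic on the relevant side) vanishes on a flat piece of boundary. By odd reflection across that line it is harmonic in a full neighbourhood, hence smooth up to the boundary. The only other boundary singularities of $\Omega_1$ would be the vertices $\pm1$ of the sectors; these are convex right-angle corners, and $\pm1\notin\overline{\Omega_1}$ is false. I will treat them as follows. At $z=-1$ (vertex of $D_1$), and at $z=1$ similarly, the angle is $\pi/2$ and the Dirichlet data $|x|=-x$ is affine. Subtracting the harmonic function $-x$ leaves a harmonic function vanishing on both sides of a right angle. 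Two successive odd reflections then give smoothness. Interior points are smooth trivially. At infinity nothing is required, since only local regularity is claimed. The limit conditions in \eqref{Eq:Sys-Phi} give boundedness, which we need for the local estimates.

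\textbf{Near the reentrant corners $\pm i$.} At $i$, the boundary of $\Omega_1$ consists of the segments of $\partial D_2$ and $\partial D_1$ meeting with interior angle $3\pi/2$ (the union of two quarter-planes overlapping). On the left half the data is $-x$ and on the right half it is $x$; inside, $\psi$ is harmonic across $\mathcal{M}$. I subtract a harmonic function $h$ matching the data near $i$. A natural choice is $h=\Re(c\,(z-i))$ with a suitable constant $c$. If the data on the two rays is not compatible with a single affine function, I instead use $h=\Im$ or $\Re$ of $c(z-i)\log(z-i)$-type correctors, or simply reduce to data in $C^{1}$ along the boundary up to a linear term. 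Then $w=\psi-h$ is bounded, harmonic in a $3\pi/2$ sector near $i$, and has boundary data which is $O(|z-i|)$ and piecewise smooth. Next I conformally straighten the corner with $\zeta=(z-i)^{2/3}$ (suitably rotated), mapping the sector to a half-disc. In the $\zeta$ variable the data becomes $O(|\zeta|^{3/2})$ and $C^{1,1/2}$ along the flat boundary. The standard boundary Schauder estimate (or the Poisson integral on the half-disc) then yields $w\circ z \in C^{1,\gamma}$ in $\zeta$ up to the boundary. Differentiating back gives $|\nabla_z w|\lesssim |\nabla_\zeta w|\,|d\zeta/dz|\lesssim |z-i|^{-1/3}$, which is \eqref{Eq:PhiAuxCoins}. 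Since $|z-i|^{-1/3}$ is integrable along segments, $\psi$ is $C^{2/3}$ near $i$; the function $\zeta$ itself is $C^{2/3}$. The harmonic conjugate inherits the same bounds because $|\phi'|=|\nabla\psi|$, so $\phi\in C^{2/3}_{loc}$ by integrating $\phi'$ along paths in $\Omega_1$ (which is locally a John-type domain near the corner, so path length is comparable to distance).

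\textbf{Main obstacle.} The delicate step is the corner analysis at $\pm i$. The data $|x|$ itself has a kink exactly at $\pm i$ along the boundary (slopes $\pm1$ relative to the boundary rays), which must be shown to produce at worst the $\zeta^{3/2}$-type singularity and not a worse logarithmic term. I would handle this by writing the data on each ray in terms of arclength $s$ from the corner: it is linear in $s$ on each ray, becoming $|\zeta|^{3/2}$ times a constant on each side. I would then check via the explicit half-plane Poisson kernel that such data yields a solution whose $\zeta$-gradient is bounded. A $|\zeta|^{1/2}\log|\zeta|$ could at worst appear, still giving the exponent $1/3$ up to an arbitrarily small loss, and the exact $1/3$ would follow by noting that the homogeneous solution of degree $3/2$ in $\zeta$ exists without logarithm since $3/2\notin\mathbb{Z}$.
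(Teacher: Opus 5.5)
Your proof is correct, and its core is the same as the paper's. You straighten the $3\pi/2$ corner at $i$ with $\zeta=(i(z-i))^{2/3}$ (the paper's $\mathcal{G}^{-1}$), obtain regularity on a half-disc in the $\zeta$-variable, and pull back using $|d\zeta/dz|\sim|z-i|^{-1/3}$ and the $C^{2/3}$ regularity of $\mathcal{G}^{-1}$.

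The difference is in how the boundary data is handled. The paper observes that the upper boundary of $\Omega_1$ is $\{\Im z=1+|\Re z|\}$. So the data $|\Re z|$ coincides on both rays with the single affine function $\Im z-1$, which is your $h=\Re(c(z-i))$ with $c=-i$. Hence $\phi_r=\Re\phi-\Im z+1$ vanishes on the whole upper boundary, across the corner included, and $\phi_r\circ\mathcal{G}$ extends by odd reflection across $i\R$. No Schauder estimate or Poisson-kernel computation is needed, and $\Im\phi$ is obtained as the harmonic conjugate of the reflected function.

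Consequently the case you hedge on (data not compatible with one affine function) never occurs. The kink you call the main obstacle is just the trace of an affine function on a V-shaped boundary. Indeed, the degree-$3/2$ homogeneous solution you invoke, $-\Re(\zeta^{3/2})$, is exactly $\Im z-1$ written in the $\zeta$-variable. Your route through $C^{1,1/2}$ boundary data and boundary Schauder estimates is still valid. It is slightly more robust, covering any data piecewise smooth in arclength, but uses heavier machinery.

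One factual slip: $\pm1$ are not boundary points of $\Omega_1$, since $-1\in D_2$ and $1\in D_1$. The only corners of $\partial\Omega_1=\{|\Im z|=1+|\Re z|\}$ are $\pm i$, so your paragraph on right-angle corners at $\pm1$ is vacuous, though harmless.
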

\begin{remark}
Lemma~\ref{Lem:PhiHolder} and Lemma~\ref{Lem:WeightHolderEstimate} below are
technical statements used to prove that one can attain exactly the constant 
$\kappa_\star$ in the exponential of Theorems~\ref{Thm:Main}, \ref{Thm:1prime} 
and \ref{Thm:DEbis}.
We could simplify the proof and discard them if we only wanted a cost of size
$\mathcal{O}(1) \exp\left( \kappa_\star \frac{\tilde{L}^2}{T} \right)$ for $\tilde{L}>L$.
\end{remark}
This function $\phi$ allows us to finally introduce
\begin{equation} \label{Eq:DefPsi}
\psi(z) = \exp\left( - \frac{z^2}{4T} - \frac{L^2}{4T}- \frac{L^2}{2T} \, 
\phi\left( \frac{z}{L} \right)\right) \text{ in } \Omega.
\end{equation}
%
%
%
%
\subsubsection{Estimating the $\A$ term.} 
Going back to estimating $\A$, we then let
\begin{equation} \nonumber 
\widetilde{\mathfrak{a}} (z) = \overline{\psi}(z) \A(z)
\text{ in } \Omega.
\end{equation}
Since $\phi$ is holomorphic in $\Omega_1$, 
we see that $\psi$ is holomorphic in $\Omega$, 
so that the multiplier $\overline{\psi}(z)$ is antiholomorphic. 
Recalling \eqref{Eq:MainEqA}, we infer that
\begin{multline*}
\partial \widetilde{\mathfrak{a}} 
= \widetilde{\mathcal{B}}
\text{ with } 
\tilde{\mathcal{B}}(z) :=
- \exp\left(\overline{\varphi}_1(z) -\frac{\overline{z}^2}{4T} - \frac{L^2}{4T} - \frac{L^2}{2T} \overline{\phi}\left(\frac{z}{L}\right) \right) 
    \widehat{B}_1 [e^{-2\lambda - \overline{\varphi}_1} \widehat{v}](z) \\
 + \exp\left( \overline{\varphi}_2(z) -\frac{\overline{z}^2}{4T} - \frac{L^2}{4T} - \frac{L^2}{2T} \overline{\phi}\left(\frac{z}{L}\right) \right) 
    \widehat{B}_2 [e^{-2\lambda - \overline{\varphi}_2}\widehat{v}](z).
\end{multline*}
Using \eqref{Def:Phi-i} and then \eqref{Eq:PropPhi}, we find that for $k=1,2$,
\begin{align} \nonumber 
\exp\left(\overline{\varphi}_k -\frac{\overline{z}^2}{4T} - \frac{L^2}{4T} 
\right. &\left. - \frac{L^2}{2T} \overline{\phi}\left(\frac{z}{L}\right)\right) \\
\nonumber 
&= \exp\left( (-1)^{k+1} \frac{L\overline{z}}{2T}  - \frac{L^2}{2T} \overline{\phi}\left(\frac{z}{L}\right)\right)  \\
\label{Eq:ExpressionPsiPhi}
&= \exp\left( \frac{L((-1)^{k+1} \overline{z}-|\Re(z)|)}{2T}  
    - \frac{L^2}{2T} \tilde{\varphi}\left(\frac{z}{L}\right)
    + i\frac{L^2}{2T} \Im{\phi}\left(\frac{z}{L}\right) \right)  .
\end{align}
Now we use \eqref{Eq:TildePhiPos} to deduce
that for $k=1,2$,
\begin{equation} \label{Eq:MultiplicateurBorne}
\left| \exp\left(\overline{\varphi}_k -\frac{\overline{z}^2}{4T} - \frac{L^2}{4T} - \frac{L^2}{2T} \overline{\phi}\left(\frac{z}{L}\right)\right) \right|
\leq  \exp\left( - \frac{L^2}{2T}\tilde{\varphi}\left(\frac{z}{L}\right)\right) \leq 1.
\end{equation}
It follows that
\begin{equation*}
\|\tilde{\mathcal{B}}(z)\|_{L^2(\Omega)} \leq 
\| B_1 [e^{-2\lambda - \overline{\varphi}_1} \widehat{v}]\|_{L^2(D_1)}
+
\| B_2 [e^{-2\lambda - \overline{\varphi}_2} \widehat{v}]\|_{L^2(D_2)}.
\end{equation*}
Another consequence of \eqref{Eq:MultiplicateurBorne} is the following.
\begin{lemma} \label{Lem:afrakinL2}
One has $\tilde{\mathfrak{a}} \in H^1_0(\Omega)$.
\end{lemma}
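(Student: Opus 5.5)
We need to show $\tilde{\mathfrak{a}} = \overline{\psi}\,\A \in H^1_0(\Omega)$, i.e. $\tilde{\mathfrak{a}}\in L^2(\Omega)$, $\nabla\tilde{\mathfrak{a}}\in L^2(\Omega)$, and $\tilde{\mathfrak{a}}$ has zero trace on $\partial\Omega$. Write $\tilde{\mathfrak{a}} = m_1\widehat{\alpha}_1 - m_2\widehat{\alpha}_2$ with
\begin{equation*}
m_k(z) := \exp\left(\overline{\varphi}_k -\frac{\overline{z}^2}{4T} - \frac{L^2}{4T} - \frac{L^2}{2T} \overline{\phi}\left(\frac{z}{L}\right)\right).
\end{equation*}
By \eqref{Eq:MultiplicateurBorne}, $|m_k|\leq 1$ in $\Omega$. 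Since $\widehat{\alpha}_k\in H^1_0(\Omega)$ (a consequence of Lemma~\ref{Lem:Poisson} and Lemma~\ref{Lem:Est-Asymp-alphai}), we immediately get $\tilde{\mathfrak{a}}\in L^2(\Omega)$. The boundary condition is handled by approximation: multiplying an $H^1_0$ function by a bounded function that is locally $H^1$ (or better, locally Lipschitz) keeps zero trace, once the gradient is controlled.

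The main work is the gradient. We have $\nabla(m_k\widehat\alpha_k) = m_k\nabla\widehat\alpha_k + \widehat\alpha_k\nabla m_k$. The first term is bounded in $L^2$ by $\|\nabla\widehat\alpha_k\|_{L^2}$. For the second term, $m_k$ is antiholomorphic, so $\nabla m_k$ is controlled by $|m_k|$ times the derivative of the exponent, namely
\begin{equation*}
(-1)^{k+1}\frac{L}{2T} - \frac{L}{2T}\,\overline{\phi'}\left(\frac{z}{L}\right).
\end{equation*}
Hence $|\nabla m_k|\lesssim_T 1+|\phi'(z/L)|$. I would split $\Omega$ into three regions.

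\emph{A bounded region away from $\pm iL$.} Here $\phi$ is smooth up to the boundary by Lemma~\ref{Lem:PhiHolder}, so $\nabla m_k$ is bounded and $\widehat\alpha_k\nabla m_k\in L^2$.

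\emph{Neighborhoods of the corners $\pm iL$.} Lemma~\ref{Lem:PhiHolder} gives $|\nabla m_k|\lesssim |z\mp iL|^{-1/3}$. Since $\widehat\alpha_k\in H^1$ locally, Sobolev embedding in 2D gives $\widehat\alpha_k\in L^p$ for every $p<\infty$. H\"older's inequality then works because $|z\mp iL|^{-1/3}\in L^q_{loc}$ for all $q<6$; for instance take $q=4$ and $p=4$.

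\emph{Infinity, $|z|\geq 4L$.} Here the concern is that $\phi'$ might grow. But $\Re\phi = \tilde\varphi + |\Re z|$, and $\tilde\varphi$ decays at infinity by \eqref{Eq:Sys-Phi}. Using interior/boundary gradient estimates for harmonic functions on the strips $\Omega_1$ (applied to $\tilde\varphi$, which vanishes on the boundary), $\nabla\tilde\varphi$ is bounded for large $|z|$, so $\phi'$ is bounded there. Combined with $|\alpha_k|\leq K/|z|^2$ from Lemma~\ref{Lem:Est-Asymp-alphai}, which is square integrable at infinity in $\Omega$ (where $\Omega$ has width $\sim |z|$), this gives $\widehat\alpha_k\nabla m_k\in L^2$ near infinity.

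Finally, the functions $\widehat\alpha_k$ jump across nothing: they are already in $H^1_0(\Omega)$, and $m_k$ is defined on all of $\Omega$. So the product rule holds in $H^1_{loc}$, and the zero trace follows from the zero trace of $\widehat\alpha_k$, using that $m_k$ is continuous on $\overline\Omega$ ($\phi\in C^{2/3}_{loc}$).

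I expect the main obstacle to be the step at infinity: justifying boundedness of $\phi'$, which requires uniform boundary gradient bounds for $\tilde\varphi$ along the unbounded straight boundary. Regularity near the corners is handled by the exponent $1/3$ in Lemma~\ref{Lem:PhiHolder}.
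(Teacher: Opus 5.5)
Your proof is correct and follows essentially the same route as the paper. The $L^2$ part and the terms $m_k\nabla\widehat\alpha_k$ come from $|m_k|\le 1$ in \eqref{Eq:MultiplicateurBorne} together with Lemma~\ref{Lem:Est-Asymp-alphai}. Near $\pm iL$ you use an $L^4\times L^4$ H\"older bound based on \eqref{Eq:PhiAuxCoins}, and at infinity you bound $\nabla\phi$ via local elliptic estimates on $\tilde\varphi$ and Cauchy--Riemann. The zero trace is inherited from $\widehat\alpha_k$.

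The step you flag as the main obstacle is handled in the paper in one line, exactly as you suggest. Near infinity $\partial\Omega_1$ consists of straight half-lines, so odd reflection gives uniform gradient bounds. One small correction: there $\Omega_1$ is a union of two sectors, not strips.
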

\begin{proof}[Proof of Lemma~\ref{Lem:afrakinL2}]
From Lemma~\ref{Lem:Est-Asymp-alphai} we deduce that $\alpha_1$ and $\alpha_2$
belong to $L^2(\Omega)$, so \eqref{Eq:MultiplicateurBorne} gives 
$\tilde{\mathfrak{a}} \in L^2(\Omega)$. 
Now consider the derivatives
\begin{equation*}
\partial_k (\overline{\psi} \mathfrak{a}) =
\partial_k (\overline{\psi} e^{\overline{\varphi_1}}) \widehat{\alpha}_1 
+ (\overline{\psi} e^{\overline{\varphi_1}}) \partial_k \widehat{\alpha}_1 
-\partial_k (\overline{\psi} e^{\overline{\varphi_2}}) \widehat{\alpha}_2 
- (\overline{\psi} e^{\overline{\varphi_2}}) \partial_k \widehat{\alpha}_2 .
\end{equation*}
The second and fourth term in the right-hand side are clearly in $L^2(\Omega)$ due to
\eqref{Eq:MultiplicateurBorne}. Consider the first one, the third one being similar.
We note that $\nabla\tilde{\varphi}(z) \rightarrow 0$ as $|z| \rightarrow +\infty$ inside $\Omega$
due to \eqref{Eq:Sys-Phi} and local elliptic estimates. 
With \eqref{Eq:PropPhi}, we deduce that $|\nabla \Re(\phi)| \lesssim 1$ outside a bounded set.
Hence the same can be said of $\Im(\phi)$ by the Cauchy-Riemann equation, since it is the
harmonic conjugate of $\Re(\phi)$.
With \eqref{Eq:ExpressionPsiPhi}, this involves that 
$\partial_k (\overline{\psi} e^{\overline{\varphi_1}})$ is bounded outside of a bounded set. \par
On the rest of $\Omega$, the only singularities are at $\pm iL$, due to the singularity
of $\nabla \phi$ there. 
However due to \eqref{Eq:PhiAuxCoins}, one has 
$\partial_k (\overline{\psi} e^{\overline{\varphi_1}}) \in L^4$ locally at $\pm iL$, 
and so does $\widehat{\alpha}_1$ because it is in $H^1_{loc}$. 
This proves $\nabla \mathfrak{a} \in L^2(\Omega)$. \par
Finally, the vanishing trace of $\mathfrak{a}$ on the boundary comes from 
$\widehat{\alpha}_k \in \dot{H}^1_0(\Omega)$, $k=1,2$.
\end{proof}
Now due to Lemma~\ref{Lem:Poisson-Omega}, there exists a unique 
$\beta \in \dot{H}^1_0(\Omega)$ solution to 
\begin{equation*}
\Delta \beta = 4 \overline{\partial} \tilde{\mathcal{B}},
\end{equation*}
which moreover satisfies $\| \nabla \beta \|_{L^2(\Omega)} \lesssim \| \tilde{\mathcal{B}}\|_{L^2(\Omega)}$. 
Due to \eqref{Eq:Laplacian}, $\tilde{\mathfrak{a}} - \beta$ is harmonic, 
and moreover is null on $\partial \Omega$. 
The uniqueness part in Lemma~\ref{Lem:Poisson-Omega} involves that $\tilde{\mathfrak{a}} = \beta$. We deduce the main estimate on $\A$:
\begin{equation} \label{Eq:MainEstimateA}
\| \nabla \tilde{\mathfrak{a}} \|_{L^2(\Omega)} \lesssim
\left( \|B_1  \circ E_1 [e^{-2\lambda - \overline{\varphi}_1} \, {v}]\|_{L^2(D_1)}
+
\|B_2  \circ E_2 [e^{-2\lambda - \overline{\varphi}_2} \, {v}]\|_{L^2(D_2)} 
\right).
\end{equation}
%
%
%
%
%
%
%
\subsubsection{Proof of the main estimate \eqref{Eq:Central-Goal}.}
This estimate will rely on \eqref{Eq:Decomp-v-i} and the Cauchy-Green formula \eqref{Eq:Cauchy-Green2} for the $\partial$ operator,
used separately on $\sql$ and $\sqr$ (recall the notation from \eqref{Eq:Subdomains}). \par
\ \par
\noindent
{\it a. On $\sql$.}
We start from
\begin{eqnarray} \nonumber
\int_{\sql} e^{-2\lambda} |v|^2 \, dA(z) 
&=& \int_{\sql} \left( e^{-2\lambda - \overline{\varphi}_1} v \right)
    e^{\overline{\varphi}_1} \overline{v} \, dA(z) \\
\label{Eq:ITerm}
&=& \int_{\sql} B_1 \left( e^{-2\lambda - \overline{\varphi}_1} v \right)
    e^{\overline{\varphi}_1} \overline{v} \, dA(z) \\
\label{Eq:IITerm}
&\ & + \int_{\sql} \left( \partial \alpha_1 \right)
    e^{\overline{\varphi}_1} \overline{v} \, dA(z) .
\end{eqnarray}
We call $I$ and $\II$ the terms in \eqref{Eq:ITerm} and \eqref{Eq:IITerm}, respectively. \par
\ \par
\noindent
$\bullet$ Concerning the term $I$, we write
\begin{equation*}
I = \int_{\sql} e^{-\lambda}\overline{v} \, 
    \left(e^{\overline{\varphi}_1} e^{\lambda}\right) \,
    B_1 \left( e^{-2\lambda - \overline{\varphi}_1} v \right) \, dA(z).
\end{equation*}
We notice that for $z=x+iy$
\begin{equation*}
\left|e^{\overline{\varphi}_1(z)} e^{\lambda(z)} \right|
= e^{\frac{(x+L)^2}{4T}}
\leq \exp\left( \frac{L^2}{4T} \right)
\ \text{ on } \ \sql,
\end{equation*}
so using the Cauchy-Schwarz inequality we find
\begin{equation} \nonumber 
|I| \leq \exp\left( \frac{L^2}{4T} \right) \, 
\left\| B_1 \left( e^{-2\lambda - \overline{\varphi}_1} v \right) \right\|_{L^2(D_1)} 
\|v\|_{\Hsq}.
\end{equation}
\ \par
\noindent
$\bullet$ Concerning the term $\II$, we first notice that $e^{\varphi_1}\, v \in \mathscr{H}(\sq)$,
so $\partial \left( \overline{e^{\varphi_1}\, v} \right)=0$ in $\sq$.
Using \eqref{Eq:Cauchy-Green2} and $\alpha_1 \in \dot{H}^1_0(D_1)$, we find
\begin{equation*}
\II 
= - \frac{1}{2i} \int_{\partial \sql} \alpha_1 \, e^{\overline{\varphi}_1} \, 
    \overline{v} \, d \overline{z} 
= - \frac{1}{2i} \int_{\SN} \alpha_1 \, e^{\overline{\varphi}_1} \, 
    \overline{v} \, d \overline{z},
\end{equation*}
where $\SN$ was introduced in Paragraph~\ref{Subsubsec:Domains}
and where we used the regularity of $v$ supposed in the beginning;
the integral could be replaced with the
$\left\langle \cdot, \, \cdot 
    \right\rangle_{H^{1/2}(\partial \sql) \times H^{-1/2}(\partial \sql)}$ 
duality product (this is further discussed below).
\par
\ \par
\noindent
{\it b. On $\sqr$.}
We make the same computations on $\sqr$, {\it mutatis mutandis}, and find that
\begin{equation} \label{Eq:RightSide}
\int_{\sqr} e^{-2\lambda} |v|^2 \, dA(z) 
= I' - \frac{1}{2i}  \int_{\NS} \alpha_2 \, e^{\overline{\varphi}_2} \, 
    \overline{v} \, d \overline{z},
\end{equation}
with
\begin{equation} \label{Est-I-Prime}
|I'| \leq \exp\left( \frac{L^2}{4T} \right) \, 
\left\| B_2 \left( e^{-2\lambda - \overline{\varphi}_2} v \right) \right\|_{L^2(D_2)} 
\, \|v\|_{\Hsq}.
\end{equation}
Of particular importance is the fact that the line integral in \eqref{Eq:RightSide}
is from $iL$ to $-iL$,  due to the trigonometric orientation of $\partial \sqr$ 
in this computation. \par
\ \par
\noindent
{\it c. Gathering the two sides.}
Summing the previous estimates on $\sql$ and $\sqr$, we find that
\begin{equation} \label{Eq:Cutting-Hsquare-norm}
\int_{\sq} e^{-2\lambda} |v|^2 \, dA(z) = I'' + \II'',
\end{equation}
where
\begin{equation*}
\left| I'' \right| \leq \exp\left( \frac{L^2}{4T} \right) \, 
\, \|v\|_{\Hsq}
\left( 
\left\| B_1 \left( e^{-2\lambda - \overline{\varphi}_1} v \right) \right\|_{L^2(D_1)}  
+ \left\| B_2 \left( e^{-2\lambda - \overline{\varphi}_2} v \right) \right\|_{L^2(D_2)}  
\right),
\end{equation*}
and
\begin{equation}
\label{Eq:Form-II'}
\II'' 
= - \frac{1}{2i} \int_{\SN} \left( 
e^{\overline{\varphi}_1} \, \alpha_1  - e^{\overline{\varphi}_2} \, \alpha_2 
\right) \overline{v} \, d \overline{z} 
= - \frac{1}{2i} \int_{\SN} \A\, \overline{v} \, d \overline{z}.
\end{equation}
Now the formal computation would be to use the duality inequality
\begin{equation*}
|\II''|  \leq \, \| e^{\lambda} \A \|_{H^{1/2}(\SN)} 
\|e^{-\lambda} v\|_{H^{-1/2}(\SN)},
\end{equation*}
and to treat $\|e^{-\lambda} v\|_{H^{-1/2}(\SN)}$ with Lemma~\ref{Lem:Traces-Bergman}. 
However there are two difficulties:
\begin{itemize}
\item[--] the restriction operator from $H^{-1/2}(\partial\sql)$ to $H^{-1/2}(\mathcal{M})$ is not 
well-defined (see e.g. \cite[Chapter 33]{tartar2007introduction}). 
The answer to this technical issue is to notice that
$\A_{|\SN}$ belongs to the Lions-Magenes space $H^{1/2}_{00}(\SN)$
(the subspace of $H^{1/2}(\SN)$ made of functions that once extended by $0$ 
outside $\SN$ belong to $H^{1/2}(i\R)$, cf. ibid and 
\cite[Chapter 11]{lions2012non}). 
Equivalently, we extend $\A_{|\mathcal{M}}$ 
by $0$ in $\partial\sql$ and work in this domain.
\item[--] we have estimates on $\tilde{\A}$ rather than on $\A$; so we will actually
decompose $\A\, \overline{v}$ as 
\begin{equation*}
\A\, \overline{v} 
= (e^\lambda \overline{\psi}^{-1})\cdot (\overline{\psi} \A) \cdot (e^{-\lambda} v),
\end{equation*}
that we will estimate in $C^{2/3} \times H^{1/2}_{00} \times H^{-1/2}$.
\end{itemize}
\ \par
\noindent
$\bullet$ Concerning $\|e^{-\lambda} v\|_{H^{-1/2}(\sql)}$,
we use Lemma~\ref{Lem:Traces-Bergman} to deduce
\begin{equation*}
\|e^{-\lambda} v\|_{H^{-1/2}(\partial\sql)}
\lesssim
\left( \|e^{-\lambda} v\|_{L^{2}(\sql)} 
    + \left\|\overline{\partial} \left( e^{-\lambda} v \right)
        \right\|_{L^{2}(\sql)} \right).
\end{equation*}
Since 
$\overline{\partial} \left( e^{-\lambda} v \right) 
    = -(\overline{\partial} \lambda) \left( e^{-\lambda} v \right)$,
we find
\begin{equation} \label{Eq:EstH-1/2}
\|e^{-\lambda} v\|_{H^{-1/2}(\partial\sql)} \lesssim \frac{1}{T} \| v\|_{\Hsq}.
\end{equation}
$\bullet$ We now consider $e^{\lambda} \overline{\psi}^{-1}$.
For $z =x+iy \in \Omega$, recalling \eqref{Eq:DefPsi}, we have
\begin{equation*}
e^{\lambda} \overline{\psi}^{-1} 
= \exp\left( \frac{y^2}{4T} + \frac{\overline{z}^2}{4T} + \frac{L^2}{4T}
+ \frac{L^2}{2T} \, \overline{\phi}\left( \frac{z}{L} \right)\right).
\end{equation*}
On $\SN$, we have $\overline{z}^2 = - \Im(z)^2$ so
\begin{equation} \label{Eq:ProdPoidsMult}
e^{\lambda} \overline{\psi}^{-1} 
= \exp\left( \frac{L^2}{4T}
+ \frac{L^2}{2T} \, \overline{\phi}\left( \frac{z}{L} \right)\right).
\end{equation}
With \eqref{Eq:PropPhi}, since $\Re(z)=0$ on $\SN$, we get:
\begin{equation} \label{Eq:ApparitionKappaStar}
\left| e^{\lambda} \overline{\psi}^{-1} \right|  
= \exp\left( \frac{L^2}{4T}+ \frac{L^2}{2T} \, 
\tilde{\varphi}\left( \frac{z}{L} \right)\right)
\text{ for all } z \in \SN.
\end{equation}
The maximum of this real-valued function on $\SN$ is obtained at $z=0$ and 
gives 
\begin{equation*}
\exp\left( \frac{L^2}{T}\frac{1 + 2 \tilde{\varphi}(0)}{4} \right)
= \exp\left( \kappa_\star \frac{L^2}{T} \right),
\end{equation*}
so
\begin{equation} \label{Eq:MainEstMultiplier}
\| e^{\lambda} \overline{\psi}^{-1} \|_{\infty} \leq \exp\left( \kappa_\star \frac{L^2}{T} \right).
\end{equation}
Actually, we will need a more regular estimate on $e^{\lambda} \overline{\psi}^{-1}$. 
Precisely the following estimate is established in 
Subsection~\ref{Subsec:WeightHolderEstimate}.
\begin{lemma} \label{Lem:WeightHolderEstimate}
The function $e^{\lambda} \overline{\psi}^{-1}$ restricted to $\mathcal{M}$
is of H\"older class $C^{2/3}(\mathcal{M})$ and satisfies
\begin{equation}
\label{Eq:WeightHolderEstimate}
| e^{\lambda} \overline{\psi}^{-1} |_{C^{2/3}(\mathcal{M})} \lesssim \frac{1}{T} 
\exp\left( \kappa_\star \frac{L^2}{T} \right).
\end{equation}
\end{lemma}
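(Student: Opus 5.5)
On $\mathcal{M}$ we have $z = iy$ with $y \in (-L,L)$. By \eqref{Eq:ProdPoidsMult}, the function is
\begin{equation*}
w(y) := e^{\lambda} \overline{\psi}^{-1}(iy) = \exp\left( \frac{L^2}{4T} + \frac{L^2}{2T}\, \overline{\phi}\left(\frac{iy}{L}\right)\right).
\end{equation*}
The plan is to write $w(y) = \exp(G(y))$ with $G(y) := \frac{L^2}{4T} + \frac{L^2}{2T}\overline{\phi}(iy/L)$, and to estimate the $C^{2/3}$ seminorm of the exponential from the sup bound \eqref{Eq:MainEstMultiplier} and from a pointwise derivative bound on $G$.

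\textbf{Derivative bound.} Differentiating gives
\begin{equation*}
w'(y) = G'(y)\, w(y), \qquad G'(y) = \frac{L^2}{2T} \cdot \frac{i}{L}\, \overline{\phi'}(iy/L).
\end{equation*}
By \eqref{Eq:ApparitionKappaStar} and \eqref{Eq:MainEstMultiplier}, $|w(y)| \le \exp(\kappa_\star L^2/T)$ on $\mathcal{M}$. By Lemma~\ref{Lem:PhiHolder}, $\phi$ is smooth on $\overline{\Omega_1}\setminus\{\pm i\}$. The segment from $-i$ to $i$ is compact, so away from its endpoints $|\phi'|$ is bounded. Near the endpoints, \eqref{Eq:PhiAuxCoins} gives $|\phi'(iy/L)| \lesssim |y \mp L|^{-1/3}$. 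Combining these,
\begin{equation*}
|w'(y)| \lesssim \frac{1}{T}\, \exp\left(\kappa_\star \frac{L^2}{T}\right) \bigl( |L-y|^{-1/3} + |L+y|^{-1/3} \bigr),
\end{equation*}
where the factor $L$ is absorbed into $\lesssim$ since $L$ is fixed.

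\textbf{From the derivative bound to the H\"older seminorm.} The function $h(y) = |L-y|^{-1/3} + |L+y|^{-1/3}$ satisfies
\begin{equation*}
\int_{y_1}^{y_2} h \lesssim |y_2 - y_1|^{2/3}
\end{equation*}
uniformly for $y_1 < y_2$ in $(-L,L)$. This follows from $\int_{y_1}^{y_2} |L-s|^{-1/3}\, ds = \tfrac32\bigl(|L-y_1|^{2/3} - |L-y_2|^{2/3}\bigr)$ together with the subadditivity of $t \mapsto t^{2/3}$. Integrating $w'$ then gives
\begin{equation*}
|w(y_2) - w(y_1)| \lesssim \frac{1}{T}\, e^{\kappa_\star L^2/T}\, |y_2 - y_1|^{2/3},
\end{equation*}
which is \eqref{Eq:WeightHolderEstimate}. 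Continuity of $w$ up to the endpoints, coming from the $C^{2/3}_{loc}$ statement of Lemma~\ref{Lem:PhiHolder}, extends the estimate to the closed segment.

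\textbf{Main obstacle.} The delicate point is that we must not use a cruder inequality such as $|e^{a} - e^{b}| \le |a-b|\, e^{\max(\Re a, \Re b)}$ with a loose bound on the exponent. Doing so could lose in the exponential and fail to keep the exact constant $\kappa_\star$. Working with $w' = G' w$ and the sharp sup bound \eqref{Eq:MainEstMultiplier} along the whole path avoids this. The path is $\mathcal{M}$ itself, on which $|w|$ is controlled pointwise by \eqref{Eq:ApparitionKappaStar}, so the constant $\kappa_\star$ is preserved exactly and the only loss is the polynomial factor $1/T$.
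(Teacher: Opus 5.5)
Your argument is correct, but it takes a different route from the paper.

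The paper first shows that $\Im(\phi)$ is constant on $\mathcal{M}_1$. Since $\tilde{\varphi}$ is even in $x$, one has $\partial_x \Re(\phi)=0$ there, and Cauchy--Riemann then gives $\partial_y \Im(\phi)=0$. So, up to a unimodular constant, the function is $\exp\bigl(\frac{L^2}{4T}+\frac{L^2}{2T}\Re(\phi)(\cdot/L)\bigr)$ with a real, nonnegative exponent. The paper then expands this exponential in a power series and uses $|f^k|_{C^{2/3}}\le k\|f\|_\infty^{k-1}|f|_{C^{2/3}}$. Here $|\Re(\phi)|_{C^{2/3}}$ comes from the $C^{2/3}_{loc}$ part of Lemma~\ref{Lem:PhiHolder}, and $\|\Re(\phi)\|_{L^\infty(\mathcal{M}_1)}=\tilde{\varphi}(0)$. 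That step relies on the exponent being real and nonnegative; otherwise $\|f\|_\infty$ would involve $\sup|\phi|$ and the sharp constant would be lost.

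You instead use the gradient rate \eqref{Eq:PhiAuxCoins} and integrate $w'=G'w$ along $\mathcal{M}$, bounding $|w|$ pointwise by the exact modulus identity \eqref{Eq:ApparitionKappaStar}. Since only $\Re G$ enters $|w|$, you never need the constancy of $\Im(\phi)$. Your integral computation for $|L\mp y|^{-1/3}$, with subadditivity of $t\mapsto t^{2/3}$, correctly produces the exponent $2/3$.

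Two small remarks:
\begin{itemize}
\item You have the sign of $i$ flipped: $\frac{d}{dy}\overline{\phi(iy/L)}=-\frac{i}{L}\overline{\phi'(iy/L)}$. This is harmless, since only moduli are used.
\item Your ``main obstacle'' is less dangerous than you suggest. The bound $|e^{a}-e^{b}|\le|a-b|\,e^{\max(\Re a,\Re b)}$ is exact (integrate along the segment $[b,a]$). Combined with $\Re G\le \kappa_\star L^2/T$ on $\mathcal{M}$ and $\phi\in C^{2/3}$ on the compact segment $\overline{\mathcal{M}_1}$, it gives the lemma in one line, again without the symmetry argument.
\end{itemize}
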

\noindent
$\bullet$
Let us now focus on $\overline{\psi} {\A}=\tilde{\A}$. 
Since, due to Lemma~\ref{Lem:afrakinL2}, we have 
$\tilde{\A} \in H^1_0(\Omega)$,
we can extend it to $\C$ by $0$. Call $\check{\A}$ this extension; 
clearly it has a trace in $H^{1/2}(i\R)$.
Now we call $\widehat{\A}$ the function on $\partial \sql$ obtained by extending
$\tilde{\A}$ by $0$ on $\partial \sql \setminus \mathcal{M}$.
This gives an $H^{1/2}$ function on $\partial \sql$, because it is defined by means 
of local maps based on the extension of $\check{\A}$. 
Moreover, $\widehat{\A}$ has an $H^{1/2}$-norm equivalent to the one of 
$\check{\A}$. Using Poincaré's inequality, we infer that 
\begin{equation} \label{Eq:EstCheckA}
\| \widehat{\A}\|_{H^{1/2}(\partial \sql)} 
    \lesssim \|\tilde{\A}\|_{H^1(\Omega \cap \{|\Re(z)| <1\})}
    \lesssim \|\nabla \tilde{\A}\|_{L^2(\Omega)}.
\end{equation} \par
\ \par
Now we can resume the computation left in \eqref{Eq:Form-II'}:
\begin{align*}
|\II''| 
&= \frac{1}{2} \left| \int_{\SN} \A\, \overline{v} \, d \overline{z} \right| \\
&= \frac{1}{2} \left| \int_{\sql} \widehat{\A} \cdot \Pi_{\sql}(e^\lambda \overline{\psi}^{-1})
    \cdot (e^{-\lambda}\overline{v}) \, d \overline{z} \right| \\
&\lesssim \|\widehat{\A}\|_{H^{1/2}(\partial\sql)} 
          \|\Pi_{\sql}(e^\lambda \overline{\psi}^{-1})\|_{C^{2/3}(\partial\sql)}
          \|e^{-\lambda}\overline{v}\|_{H^{-1/2}(\partial \sql)},
\end{align*}
where $\Pi_{\sql}$ is a linear continuous operator 
$C^{2/3}(\mathcal{M}) \rightarrow C^{2/3}(\partial \sql)$
and where we used that $C^{2/3}(\partial \sql)$ is a multiplier space 
on $H^{1/2}(\partial \sql)$.
Using \eqref{Eq:EstH-1/2}, \eqref{Eq:MainEstMultiplier}-\eqref{Eq:WeightHolderEstimate}
and \eqref{Eq:EstCheckA}, we deduce
\begin{equation*}
|\II''|
\lesssim \frac{1}{T^2}e^{\kappa_\star \frac{L^2}{T}} \,
    \| \nabla \tilde{\A} \|_{L^{2}(\Omega)} \| v\|_{\Hsq}.
\end{equation*}
We inject the above estimates on $I''$ and $\II''$ in \eqref{Eq:Cutting-Hsquare-norm}.
Simplifying by $\| v\|_{\Hsq}$ and using $\kappa_\star \geq \frac{1}{4}$, we get:
\begin{equation*}
\| v\|_{\Hsq} \lesssim \frac{1}{T^2}
\exp\left( \frac{\kappa_\star L^2}{T} \right) \| \nabla \tilde{\A} \|_{L^{2}(\Omega)}.
\end{equation*}
Using \eqref{Eq:MainEstimateA}, we find
\begin{equation*}
\| v\|_{\Hsq} \lesssim \frac{1}{T^2}
\exp\left( \frac{\kappa_\star L^2}{T} \right) \left[ 
\left\| B_1 \circ E_1 \left( e^{-2\lambda - \overline{\varphi}_1} v \right) \right\|_{L^2(D_1)}  
+ \left\| B_2 \circ E_2 \left( e^{-2\lambda - \overline{\varphi}_2} v \right) \right\|_{L^2(D_2)} 
\right].
\end{equation*}
This proves \eqref{Eq:Observability}.
\qed
\begin{remark}
Following the same lines with $\varphi_1 = \varphi_2 = \lambda= 0$ and without the need 
of the multiplier $\psi$ would give an alternative proof 
to Hartmann \& Orsoni's result that  $L^2_a(\sq) \subset L^2_a(D_1) + L^2_a(D_2)$, 
see \cite[Corollary 1.6]{Hartmann-Orsoni}.
Their full result is however more general, considering $L^p$ spaces and covering more
diverse geometric situations.
\end{remark}
%
%
%
%
%
%
%
%
%
\section{Proofs concerning the Poisson equation}
\label{Sec:Technical-proofs}
%
%
%
%
%
%
\subsection{Poisson equation in the quadrant: proofs of Lemmas~\ref{Lem:Poisson}, 
\ref{Lem:Bergman-on-Di} and \ref{Lem:Est-Asymp-alphai}}
\label{Subsec:Proof-Poisson-D}
\begin{proof}[Proof of Lemma~\ref{Lem:Poisson}]
This statement is more or less classical, but since the domain is unbounded and
merely Lipschitz, some explanations are in order.
We follow Sohr \& Specovius-Neugebauer \cite{SohrSpecovius1998} and 
Ortner \& Süli \cite{ortner2012note}.
The first part of the statement can be obtained in a standard way by means of the
Lax-Milgram approach. First, we have the following.
\begin{lemma} \label{Lem:dotH10}
The space $\dot{H}^1_0(Q)$ is a Hilbert space when equipped with the scalar product
\begin{equation*}
\left\langle u, \, v \right\rangle_{\dot{H}^1_0(Q)} 
= \int_{Q} \nabla u \cdot \nabla v \, dx.
\end{equation*}
Moreover, $C^{\infty}_c(Q)$ is dense in $\dot{H}^1_0(Q)$.
\end{lemma}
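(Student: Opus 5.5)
We will prove the two claims of Lemma~\ref{Lem:dotH10} in turn: first that $\langle\cdot,\cdot\rangle_{\dot{H}^1_0(Q)}$ is a genuine inner product and the space is complete, then the density of $C^\infty_c(Q)$.

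\emph{Definiteness.} If $\nabla u = 0$ in $Q$, then $u$ is constant, since $Q$ is connected. Its trace on $\partial Q$ vanishes, so $u=0$.

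\emph{A local Poincaré inequality.} The key tool is the following estimate. For every $R>0$ and every $u\in \dot{H}^1_0(Q)$,
\begin{equation*}
\|u\|_{L^2(Q\cap B(0,R))}\le C R\,\|\nabla u\|_{L^2(Q)} .
\end{equation*}
To obtain it, work first with $u\in H^1(Q\cap B(0,R))$. Integrate along the segments parallel to the imaginary axis, starting from the boundary line $\{\Im z=0\}$ where $u$ vanishes. A one-dimensional Poincaré inequality on each segment, integrated over the other variable, gives the bound. The remark following Lemma~\ref{Lem:Poisson} ensures that $u$ is indeed locally $H^1$.

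\emph{Completeness.} Let $(u_n)$ be Cauchy for the $\dot{H}^1_0$ norm. Then $\nabla u_n \to G$ in $L^2(Q)$. By the local Poincaré inequality, $(u_n)$ is Cauchy in $H^1(Q\cap B(0,R))$ for every $R$. Hence it converges to some $u$ in $H^1_{loc}(\overline Q)$, with $\nabla u = G$. The trace on $\partial Q\cap B(0,R)$ is continuous in $H^1(Q\cap B(0,R))$, so it passes to the limit and remains zero. Thus $u\in \dot{H}^1_0(Q)$ and $u_n\to u$.

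\emph{Density.} We argue in three steps: truncation at infinity, approximation on bounded sets, and a final diagonal extraction.

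1. \emph{Truncation at infinity.} Truncating with the naive cutoff $\chi(\cdot/R)$ is the main obstacle. The error term $u\nabla\chi_R$ is only controlled by $R^{-1}\|u\|_{L^2(B_{2R}\setminus B_R)}$. By Poincaré this is of size $\|\nabla u\|_{L^2(Q)}$, not small, which is the usual failure in dimension $2$. To get around this, we use the logarithmic cutoff of Sohr--Specovius-Neugebauer and Ortner--Süli. Take $\eta_R(z)=\min\bigl(1,\max(0,2-\ln|z|/\ln R)\bigr)$, which equals $1$ for $|z|\le R$ and $0$ for $|z|\ge R^2$, with $|\nabla\eta_R|\le 1/(|z|\ln R)$. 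We must show that $\|u\nabla\eta_R\|_{L^2}\to0$ as $R\to\infty$. For this we apply a Hardy-type inequality in the sector,
\begin{equation*}
\int_{Q} \frac{|u|^2}{|z|^2} \le C\int_Q |\nabla u|^2 ,
\end{equation*}
which holds because $u$ vanishes on both rays bounding $Q$. It follows from the one-dimensional Poincaré inequality in the angular variable on arcs of opening $\pi/2$ with Dirichlet conditions at both ends. In fact this Hardy inequality alone gives $\|u\nabla\eta_R\|_{L^2}\le C\|\nabla u\|_{L^2(Q)}/\ln R\to 0$. Even the ordinary cutoff would then work, since the error is bounded by $\|u/|z|\|_{L^2(|z|>R)}\to0$. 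In either case, compactly supported elements of $\dot{H}^1_0(Q)$ are dense.

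2. \emph{Bounded sets.} A compactly supported function $u\in H^1_0(Q\cap B(0,R))$ lies in the usual space $H^1_0$ of a bounded Lipschitz domain. In that setting the density of $C^\infty_c$ is standard, as given by the trace theorem of Grisvard in the Lipschitz case.

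3. \emph{Conclusion.} A diagonal argument combining steps 1 and 2 concludes the proof.
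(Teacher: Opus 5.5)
Your proof is correct, and for the density part it takes a genuinely different route from the paper. The Hilbert-space part is the same argument in both: local Poincaré plus local trace continuity carry the boundary condition to the limit.

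\textbf{The paper's route.} It extends $u$ by odd reflection across both axes to $\mathfrak{S}_1(u)$, whose gradient lies in $L^2(\R^2)$. It then invokes the density of $C^\infty_c(\R^2)$ in $\dot{H}^1(\R^2)$ modulo constants, citing Sohr--Specovius-Neugebauer and Ortner--S\"uli. Next it projects the approximants onto functions odd in both $\Re z$ and $\Im z$. This projection kills the additive constant and forces the approximants to vanish on the axes. Finally it multiplies by $\rho(x/\varepsilon)\rho(y/\varepsilon')$ to push the support off the axes, using that a smooth function vanishing on an axis is $\mathcal{O}(\varepsilon')$ in the strip of width $\varepsilon'$.

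\textbf{Your route.} You exploit the Dirichlet condition on both bounding rays through the angular Poincaré inequality. This gives the Hardy inequality $\int_Q |u|^2|z|^{-2}\,dA \le \frac14 \int_Q |\nabla u|^2\,dA$, so the ordinary cutoff at infinity already works. The logarithmic cutoff is therefore superfluous, as you observe yourself. You then finish on a bounded quarter-disk with Grisvard's identification of $H^1_0$ of a Lipschitz domain as the kernel of the trace. In Step 2 the wording ``a compactly supported $u\in H^1_0(Q\cap B(0,R))$ lies in $H^1_0$'' is circular. What you mean is that a bounded-support element of $\dot{H}^1_0(Q)$ lies in $H^1$ of a slightly larger quarter-disk with zero trace on its whole boundary, and that is fine.

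\textbf{What each buys.} Your argument is self-contained. It avoids the genuinely two-dimensional difficulty of the whole-plane space $\dot{H}^1(\R^2)$, which the paper hides in the cited references. It also uses only the Dirichlet condition on the two rays, so it works for sectors of any opening. The reflection trick, by contrast, is tied to the right angle, since the four reflected quadrants tile the plane. On the other hand, the paper's argument produces explicit smooth approximants and handles the boundary by hand rather than through the Lipschitz trace theorem for this step. The paper uses that theorem anyway for the bounded piece in Lemma~\ref{Lem:dotH10Omega}.
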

\begin{proof}
The fact that $\left\langle \cdot, \, \cdot \right\rangle_{\dot{H}^1_0(Q)}$ is a scalar
product is clear, due to the zero trace on the boundary, and the completeness is
straightforward, since for a Cauchy sequence $\left( u_n \right)_{n\in \N}$, the
sequence $(\nabla u_n)$ converges in $L^2(Q)$ to a distributional gradient belonging
to $L^2(Q)$, and one can use local trace inequalities to pass to the limit on the 
boundary. \par
To prove the density of $C^{\infty}_c(Q)$ in $\dot{H}^1_0(Q)$, we use a symmetrization
of the functions with respect to the axes. Precisely we call
\begin{multline*}
Q_1 := Q, \ \ 
Q_2:= \left\{ z \in \C \ \Big/ \ \Re(z) \leq 0, \ \Im(z) \geq 0 \right\}, \\
Q_3:= \left\{ z \in \C \ \Big/ \ \Re(z) \leq 0, \ \Im(z) \leq 0 \right\}
\ \text{ and } \ 
Q_4:= \left\{ z \in \C \ \Big/ \ \Re(z) \geq 0, \ \Im(z) \leq 0 \right\},
\end{multline*}
and to $f:Q_1 \rightarrow \C$, we associate $\mathfrak{S}_1[f]$ by 
\begin{equation} \label{Eq:Symmetrizer}
\mathfrak{S}_1[f](z):=
\left\{ \begin{array}{ll}
f(z) & \ \text{ in } \ Q_1, \\
-{f}(-\overline{z}) & \ \text{ in } \ Q_2, \\
 f(-z) & \ \text{ in } \ Q_3, \\
-{f}(\overline{z}) & \ \text{ in } \ Q_4,
\end{array} \right. 
\end{equation}
Correspondingly, we can define symmetrizers $\mathfrak{S}_k$, $k=2,3,4$,
that extend functions $f:Q_k \rightarrow \C$ to functions
$\C \rightarrow \C$ which are odd with respect to both real and imaginary parts. \par
Now we use the density of $C^{\infty}_c(\R^2) / \R$ in 
\begin{equation*} 
\dot{H}^1(\R^2) : = \left\{ u \in L^1_{loc}(\R^2) \ / \ \nabla u \in L^2(\R^2) \right\} 
    \Big/ \R,
\end{equation*}
by which we mean the equivalence classes of functions up to an additive constant.
See e.g. \cite[Theorem 1]{SohrSpecovius1998} and \cite[Theorem 2.1]{ortner2012note}.
Note that this space should not be confused with the homogeneous Sobolev space
defined via the Fourier transform as the space of $L^1_{loc}(\R^2)$ functions $u$
satisfying $\int_{\R^2} |\xi|^2 | \mathscr{F} u (\xi)|^2\, d\xi < +\infty$; 
this latter space does not form a Hilbert space, see \cite[Proposition 1.34]{MR2768550}. \par
Then given $u \in \dot{H}^1_0(Q)$, we find a sequence 
$(v_n) \in C^{\infty}_c(\R^2)^{\N}$ such that
\begin{equation*}
v_n \longrightarrow \mathfrak{S}_1(u) \text{ in } \dot{H}^1(\R^2),
\text{ that is, }
\nabla v_n \longrightarrow \nabla \mathfrak{S}_1(u) \text{ in } L^2(\R^2).
\end{equation*}
We then consider the symmetrized version:
\begin{equation*}
\tilde{v}_n := \frac{\mathfrak{S}_1 \left( v_{n|Q_1} \right)
+\mathfrak{S}_2 \left( v_{n|Q_2} \right)
+\mathfrak{S}_3 \left( v_{n|Q_3} \right)
+\mathfrak{S}_4 \left( v_{n|Q_4} \right) }{4}.
\end{equation*}
Note that for all $z$ in $\C$,
\begin{equation*}
\tilde{v}_n(z) = \frac{1}{4}\left[ v_n(z) + v_n(-z) 
-v_n(\overline{z}) -v_n(-\overline{z}) \right].
\end{equation*}
In other words, one can see $\tilde{v}_n$ as the projection of $v_n$ 
on functions that are odd with respect to both $\Re(z)$ and $\Im(z)$. \par
Then the sequence $(\tilde{v}_{n})$ belongs to $C^\infty_c(\R^2)^{\N}$
and satisfies  $\tilde{v}_n = \mathfrak{S}_1(\tilde{v}_{n|Q_1})$.
In particular its restriction to $Q$ belongs to $\dot{H}^1_0(Q)$ and 
$\tilde{v}_{n|Q} \rightarrow u$ in $\dot{H}^1_0(Q)$. \par
It remains to approximate $\tilde{v}_{n|Q}$ by a function with compact
support in $Q$ rather than $\overline{Q}$.
It is mainly a matter of considering 
\begin{equation*}
\rho\left( \frac{x}{\varepsilon} \right) 
    \rho\left( \frac{y}{\varepsilon'} \right) v(x,y),
\end{equation*}
with $\rho \in C^{\infty}(\R;[0,1])$ with $\rho \equiv 1$ on $\R \setminus (-1,1)$.
Let us explain why $\rho\left( \frac{y}{\varepsilon'} \right) v(x,y)$ converges to $v$
in $\dot{H}^1_0(Q)$ as $\varepsilon' \rightarrow 0$. Call $K_n := \supp(v_n)$.
Then 
\begin{equation*}
\nabla \left\{ [1-\rho]\left( \frac{y}{\varepsilon'} \right)  v_n(x,y)  \right\}
= [1-\rho]\left( \frac{y}{\varepsilon'} \right) \nabla v_n(x,y)  
- \frac{1}{\varepsilon'} v_n(x,y) \partial_y \rho\left( \frac{y}{\varepsilon'} \right)
\end{equation*}
The first term converges to $0$ in $L^2(Q)$ as $\varepsilon' \rightarrow 0$
due to Lebesgue's dominated convergence theorem. 
For the second one, we have $v_n(x,y) = \mathcal{O}(\varepsilon')$ uniformly
in $K_n \cap \supp \rho\left( \frac{\cdot}{\varepsilon'}\right)$, and the
measure of this latter set is $\mathcal{O}(\varepsilon')$.
Taking care of the $\rho\left( \frac{x}{\varepsilon} \right)$ factor is similar.
\end{proof}
%
%
%
\noindent
{\it Back to the proof of Lemma~\ref{Lem:Poisson}}
Now thanks to the density mentioned above, Equation~\eqref{Eq:PoissonQ} 
in the sense of distributions is equivalent to 
\begin{equation*}
\forall v \in \dot{H}^1_0(Q), \ \ 
\left\langle u, \, v \right\rangle_{\dot{H}^1_0(Q)} 
= \left\langle F, \, \nabla v \right\rangle_{L^2(Q)}.
\end{equation*}
The existence and uniqueness of such a $u$ given $F$ is then a direct consequence
of the Lax-Milgram theorem or the Riesz representation theorem, which also gives
\eqref{Eq:Est-Poisson-0} at the same time, taking $v=u$. 
Then \eqref{Eq:Est-Poisson-1} is just a consequence of a local Poincaré inequality.
\end{proof}
\ \par
%
%
%
%
%
%
%
\begin{proof}[Proof of Lemma~\ref{Lem:Bergman-on-Di}] \par
We let
\begin{equation*}
\alpha := 4 \Delta^{-1}_{Q}[\overline{\partial} v] .
\end{equation*}
This is well-defined thanks to the previous lemma, with the obvious observation
that $\overline{\partial} v$ can be put in the form $\div F$ by considering
real and imaginary parts.
Then to prove \eqref{Eq:Bergman-on-Di}, it suffices to prove that 
$\partial \alpha \perp L^2_a(Q)$ and that $v - \partial \alpha \in L^2_a(Q)$.
The first part is due to the following Cauchy-Green formula 
(which is valid in the context of $\dot{H}^1_0(Q)$ due to the density of 
$C^{\infty}_{c}(Q)$ in $\dot{H}^1_0(Q)$): for $w \in L^2_a(Q)$,
\begin{equation*}
\int_{Q} w \, \overline{\partial \alpha}\, dA(z) 
= - \left\langle \overline{\partial} w , \, \alpha 
    \right\rangle_{H^{-1} \times \dot{H}^1_0(Q)} 
    + \left\langle  w, \, \alpha 
        \right\rangle_{H^{-1/2}(\partial Q) \times H^{1/2}(\partial Q)}.
\end{equation*}
Then the terms in the right-hand side vanish. \par
For the second part, using \eqref{Eq:Laplacian}, we see that
\begin{equation*}
\overline{\partial} \left( v - \partial \alpha \right) = 0.
\end{equation*}
Hence $v - \partial \alpha \in L^2_a(Q)$.
\end{proof}
\ \par
%
%
%
\begin{proof}[Proof of Lemma~\ref{Lem:Est-Asymp-alphai}]
It is enough to treat the case $k=1$. Due to the support of $v$, 
$\overline{\partial} \, E_1[e^{-2\lambda - \overline{\varphi_1}} v]$ is supported in
$\overline{\sq}$, and consequently $\alpha_1:D_1 \rightarrow \C$ is harmonic
in $D_1 \setminus D_2$. 
Now we define $\check{\alpha}_1$ as $\check{\alpha}_1 = \alpha_1 \circ T$, 
where $T$ is the unique rigid movement (translation-rotation) sending $Q$ on $D_1$.
Symmetrizing $\check{\alpha}_1$ as in \eqref{Eq:Symmetrizer}, 
we obtain a function $\tilde{\alpha}_1$ that is harmonic 
outside of the square of diagonal $[-(1+i)\sqrt{2} L, (1+i)\sqrt{2}L]$
(as a consequence of Schwarz's reflection principle for harmonic functions). 
So $\partial \tilde{\alpha}_1$ is holomorphic outside of this square. 
Since it is also in $L^2(\C)$, its Laurent series decomposition has only powers that
are less than or equal to $-2$:
\begin{equation*}
\partial \tilde{\alpha}_1(z) = \sum_{k=2}^{+\infty} c_k z^{-k} 
\text{ for } |z| > 3L.
\end{equation*}
Moreover, by construction, $\tilde{\alpha}_1$ is even (for all $z \in \C$, 
$\tilde{\alpha}_1(-z) = \tilde{\alpha}_1(z)$), and consequently $\partial \tilde{\alpha}_1$
is odd. It follows that one also has $c_2=0.$ \par
Thus, for $|z|>4L$,
\begin{equation*}
\left| \partial \tilde{\alpha}_1(z) \right| 
    \leq \frac{K}{|z|^3},
\end{equation*}
for some constant $K$ depending on $L$, $T$ and $v$. Of course, the same estimate consequently applies to $\partial \alpha_1$. \par
The same reasoning can be done with $\overline{\alpha}_1$, which leads to the same estimate on 
$\partial \overline{\alpha}_1 = \overline{\overline{\partial} \alpha}_1$. 
Merging the estimate on $\partial \alpha_1$ and $\overline{\partial} \alpha_1$
one gets an estimate (enlarging $K$ a bit):
\begin{equation*}
\left| \nabla \alpha_1(z) \right| 
    \leq \frac{K}{|z|^3}.
\end{equation*}
Then \eqref{Eq:Est-Asymp-alphai} follows by line integration from 
a point of $\partial Q$ to $z$.
\end{proof}
%
%
%
%
%
%
%
%
%
%
\subsection{Poisson equation in $\Omega$: proof of Lemma~\ref{Lem:Poisson-Omega}}
\label{Subsec:Proof-Poisson-Omega}

Proceeding as for Lemma~\ref{Lem:Poisson}, it suffices to prove:
\begin{lemma} \label{Lem:dotH10Omega}
The space $\dot{H}^1_0(\Omega)$ is a Hilbert space when equipped with the scalar product
\begin{equation*}
\left\langle u, \, v \right\rangle_{\dot{H}^1_0(\Omega)} 
= \int_{\Omega} \nabla u \cdot \nabla v \, dx.
\end{equation*}
Moreover, $C^{\infty}_c(\Omega)$ is dense in $\dot{H}^1_0(\Omega)$.
\end{lemma}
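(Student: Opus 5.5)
We follow the proof of Lemma~\ref{Lem:dotH10} and treat the two claims separately. The scalar product is definite because of the zero trace: if $\nabla u = 0$ in the connected set $\Omega$, then $u$ is constant, and the constant must be $0$ since $u_{|\partial\Omega}=0$. For completeness, take a Cauchy sequence $(u_n)$. The gradients $\nabla u_n$ converge in $L^2(\Omega)$ to some $G$. On each bounded Lipschitz subdomain $\Omega\cap B(0,R)$ touching $\partial\Omega$, the local Poincaré inequality for functions vanishing on a boundary piece of positive length gives convergence of $u_n$ in $H^1(\Omega\cap B(0,R))$. The limit $u$ therefore satisfies $\nabla u = G$, and by continuity of local traces it has zero trace on $\partial\Omega$.

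The density of $C^\infty_c(\Omega)$ is the real content, since $\Omega$ is unbounded and the reflection trick used for $Q$ does not apply directly to $\Omega$. The plan is localization by a partition of unity. Take $u\in\dot H^1_0(\Omega)$ and a cutoff $\chi_R(z)=\chi(\log|z|/\log R)$, with $\chi=1$ on $(-\infty,1]$ and $\chi=0$ on $[2,\infty)$.

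The first step is to show $\chi_R u\to u$ in $\dot H^1_0(\Omega)$. The only delicate term is $u\nabla\chi_R$, which is supported in the annulus $R\le|z|\le R^2$ and satisfies $|\nabla\chi_R|\lesssim 1/(|z|\log R)$. We control it with a Hardy-type inequality adapted to the far field of $\Omega$. Outside a large ball, $\Omega$ consists of two sectors of aperture $\pi/2$ (around the positive and the negative real axis), on whose straight boundary rays $u$ vanishes. On each circle $|z|=r$, the trace of $u$ vanishes at the endpoints of an arc of length about $r\pi/2$, so the one-dimensional Poincaré inequality in the angular variable gives
\[ \int_{\{|z|=r\}\cap\Omega}|u|^2\,ds\lesssim r^2\int_{\{|z|=r\}\cap\Omega}|\nabla u|^2\,ds. \]
Integrating in $r$ yields $\int_{\Omega\setminus B(0,R_0)}|u|^2/|z|^2\lesssim\|\nabla u\|^2_{L^2}$. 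This is a much better bound than the logarithmic one: it shows $\|u\nabla\chi_R\|_{L^2}\to0$, and the simpler cutoff $\chi(|z|/R)$ would already suffice.

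The second step handles the compactly supported function $\chi_R u$, which lies in $H^1_0$ of a bounded Lipschitz domain. We cover its support by finitely many charts. In charts away from the corners $\pm iL$, and away from the reentrant vertex-free boundary pieces, the boundary is a flat segment, and we use the $\rho(y/\varepsilon')$ cutoff argument from Lemma~\ref{Lem:dotH10}, after a translation-dilation pushing the support inward. The points $\pm iL$, where $\partial D_1$ and $\partial D_2$ cross, form nonconvex corners, but $\Omega$ is still Lipschitz there. There we use the standard fact that for bounded Lipschitz domains $H^1_0$, defined by zero trace, coincides with the closure of $C^\infty_c$ (e.g. Grisvard). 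Mollifying after a local inward translation along a transversal direction of the Lipschitz graph gives the approximation.

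The main obstacle is the far-field Hardy estimate; the remaining steps are routine adaptations of the quadrant case. Once density holds, the Lax--Milgram/Riesz argument proves Lemma~\ref{Lem:Poisson-Omega} exactly as in the proof of Lemma~\ref{Lem:Poisson}.
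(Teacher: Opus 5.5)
Your proof is correct, but it treats the unbounded part of $\Omega$ differently from the paper. The paper does not truncate at infinity. It takes $\rho$ vanishing on $\overline{B}(0,2L)$ and equal to $1$ outside $B(0,3L)$, and, since $D_1$ and $D_2$ are disjoint outside $B(0,L)$, it splits $u=\rho\mathbf{1}_{D_1}u+\rho\mathbf{1}_{D_2}u+(1-\rho)u$. The first two pieces live in the quadrants $D_1$, $D_2$ and are approximated through Lemma~\ref{Lem:dotH10}, that is, through the density of $C^\infty_c(\R^2)$ in $\dot H^1(\R^2)$ combined with odd reflections. The last piece lies in $H^1_0(\Omega\cap B(0,3L))$ and is handled by Grisvard's Corollary~1.5.1.6, as in your second step.

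You replace the quadrant lemma by an angular Poincar\'e inequality on the far-field arcs. This gives $\int_{\Omega\setminus B(0,R_0)}|u|^2|z|^{-2}\,dA\lesssim\|\nabla u\|^2_{L^2(\Omega)}$, hence convergence of the plain truncations $\chi(|z|/R)u$. Your route is self-contained: it bypasses the density of $C^\infty_c(\R^2)$ in $\dot H^1(\R^2)$, which is more delicate because no Hardy inequality holds in the whole plane, and it yields a weighted bound as a by-product. The paper's route is shorter because the quadrant lemma was needed anyway for Lemma~\ref{Lem:Poisson}.

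Make explicit why, for a.e.\ $r$, the restriction of $u$ to each arc vanishes at its endpoints. The cleanest way is to extend $u$ by $0$ outside $\Omega$, which is legitimate for zero-trace functions on a Lipschitz domain. Then a.e.\ circle restriction is in $H^1$ of the whole circle, hence continuous, and it vanishes off the arcs.

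The chart-by-chart discussion in your second step is superfluous, since Grisvard's result applies at once to the bounded Lipschitz domain $\Omega\cap B(0,3R)$. Also, the $\rho(y/\varepsilon')$ trick of Lemma~\ref{Lem:dotH10} does not transfer verbatim to general $H^1_0$ functions. It is used there only for smooth functions vanishing on the boundary, for which $v_n=\mathcal{O}(\varepsilon')$ near it.
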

\begin{proof}[Proof of Lemma~\ref{Lem:dotH10Omega}]
This is a light argument of domain decomposition.
We let $\rho \in C^{\infty}(\overline{\C};\, [0,1])$ such that
\begin{equation*}
\rho = 0 \text{ on } \overline{B}(0;\, 2L)
\ \text{ and } \ 
\rho = 1 \text{ on } \C \setminus B(0;\, 3L).
\end{equation*}
We also introduce $\rho_k \in C^{\infty}(\overline{\C};\, [0,1])$, $k=1,2$,
\begin{equation*}
\rho_k := \rho \, {\bf 1}_{D_k} 
\text{ so that } \rho =\rho_1 + \rho_2.
\end{equation*}
Given $u$ in $\dot{H}^1_0(\Omega)$, we consider $\rho_1 u$, $\rho_2 u$ and $(1-\rho) u$.
The functions $\rho_1 u$ and $\rho_2 u$ can be arbitrarily approximated by
functions in $C^{\infty}_{c}(Q_1)$ and $C^{\infty}_{c}(Q_2)$, respectively, due
to Lemma~\ref{Lem:dotH10}. 
The function $(1-\rho)u$ is supported in the bounded Lipschitz domain 
$\mathfrak{O} := \Omega \cap B(0,3L)$ and belongs to $H^1_0(\mathfrak{O})$.
That it can be approximated by a function in $C^{\infty}_c(\mathfrak{O})$ is then a consequence
of \cite[Corollary 1.5.1.6]{grisvard1985elliptic}.
\end{proof}
Then Lemma~\ref{Lem:Poisson-Omega} follows as before as a consequence of the 
Lax-Milgram theorem. \qed
%
%
%
%
%
%
%
%
%
%
%
%
\subsection{Regularity of the multiplier: proofs of Lemmas~\ref{Lem:PhiHolder} 
and \ref{Lem:WeightHolderEstimate}}
\label{Subsec:WeightHolderEstimate}
\begin{proof}[Proof of Lemma~\ref{Lem:PhiHolder}]
The main problem is the regularity of $\phi$ near the angles of $\partial \Omega_1$. \par
Since obviously $\phi$ is smooth inside $\Omega_1$, we only need to prove the regularity
up to the upper and lower boundaries of $\Omega_1$.
By symmetry we only treat the upper case. 
Call $\partial \Omega_{1}^+ := \partial \Omega_1 \cap \{\Im(z)>0\}$ 
the upper boundary of $\Omega_1$.
Due to \eqref{Eq:PropPhi} and the definition of $\Omega_1$, 
one has $\Re(\phi(z)) = |\Re(z)| = \Im(z)-1$ on $\partial \Omega_{1}^+$. Let 
\begin{equation*}
\phi_r(z) := \Re(\phi(z)) - \Im(z)+1
\text{ in } \Omega_1.
\end{equation*}
Then $\phi_r$ is harmonic in $\Omega_1$, and is equal to $0$ on $\partial \Omega_{1}^+$
(in the sense of $H^1_{loc}$ traces).
Now we consider the conformal mapping $\mathcal{G}$ from 
$\C_+ :=\left\{ \Re(z)>0 \right\}$ to $\{ \Im(z) < |\Re(z)| + 1\}$ 
given by $z \mapsto i-iz^{3/2}$. 
Then $\phi_r \circ \mathcal{G}$ is harmonic in some neighborhood of $i\R$ 
in $\overline{\C_+}$ and has zero trace on $i\R$.
Consequently it can be extended across $i\R$ by reflection. 
Hence, $\phi_r \circ \mathcal{G}$ is smooth (away from the other boundary). 
Going back to $\phi_r$, it is the composition of a smooth function by
$\mathcal{G}^{-1}:z \mapsto (i(z-i))^{2/3}$. 
It is therefore smooth up to the boundary on $\partial \Omega_1 \setminus \{i\}$ 
and locally  of class $C^{2/3}$, 
and so $\Re(\phi)$ is of class $C^{2/3}$ as well. 
This proves the regularity claim for $\Re(\phi)$. 
The one for $\Im(\phi)$ follows as well, because we can introduce the harmonic
conjugate of $\phi_r \circ \mathcal{G}$ at the level of the reflected domain, 
so that composed with $\mathcal{G}^{-1}$, this gives the harmonic conjugate of
$\phi_r$, up to an additive constant. \par
The same considerations prove also \eqref{Eq:PhiAuxCoins}, since
$\Re(\phi(z)) = \Im(z) - 1 + 
[\phi_r \circ \mathcal{G}] \circ \mathcal{G}^{-1}$, where $\phi_r \circ \mathcal{G}$
is smooth. All the same, the singularity for $\Im(\phi)$ merely comes from the composition
of a smooth function with $\mathcal{G}^{-1}$. 
This concludes the proof of Lemma~\ref{Lem:PhiHolder}.
\end{proof}
\ \par
\begin{proof}[Proof of Lemma~\ref{Lem:WeightHolderEstimate}]
It is mainly a consequence of Lemma~\ref{Lem:PhiHolder}. \par
First, we show that $\Im(\phi)$ is constant on $\mathcal{M}_1$.
Here $\mathcal{M}_1$ denotes the middle line $\mathcal{M}$ corresponding to $L=1$ 
(see Paragraph~\ref{Subsubsec:Domains}).
From \eqref{Eq:Sys-Phi}, we see that  $\tilde{\varphi}$ is symmetric 
with respect to the $y$-axis. 
With \eqref{Eq:PropPhi}, we see that $\Re(\phi)$ shares the same symmetry, 
so $\partial_x \Re(\phi) = 0$ on $\mathcal{M}_1$.
By the Cauchy-Riemann equation, $\partial_y \Im(\phi) = 0$ on $\mathcal{M}_1$,
so that $\Im(\phi)$ is constant on $\mathcal{M}_1$. \par
Using the fact that $\Im(\phi)$ is constant on $\mathcal{M}_1$ and \eqref{Eq:ProdPoidsMult},
we see that
\begin{equation*}
\left| e^\lambda \overline{\psi}^{-1} \right|_{C^{2/3}(\mathcal{M})} 
= \left| \exp\left( \frac{L^2}{4T}
+ \frac{L^2}{2T} \, \Re{\phi}\left( \frac{\cdot}{L} \right)\right) \right|_{C^{2/3}(\mathcal{M})} .
\end{equation*}
Now using the natural factorization of 
$\Re(\phi)^k(x)-\Re(\phi)^k(y)$, it is elementary to see that for any $k \in \N$,
\begin{equation*}
|\Re(\phi)^k|_{C^{2/3}(\mathcal{M})} \leq k \|\Re(\phi)\|^{k-1}_{L^{\infty}(\mathcal{M})} |\Re(\phi)|_{C^{2/3}(\mathcal{M})},
\end{equation*}
so that by summation of the power series
\begin{multline*}
\left|\exp\left(\frac{L^2}{2T}\Re(\phi)\right)\right|_{C^{2/3}(\mathcal{M})} 
\leq \exp\left(\frac{L^2}{2T}\|\Re(\phi)\|_{L^{\infty}(\mathcal{M})}\right) 
   \frac{L^2}{2T} |\Re(\phi)|_{C^{2/3}(\mathcal{M})} \\
= \exp\left(\frac{L^2}{2T}\tilde{\varphi}(0)\right) 
   \frac{L^2}{2T} |\Re(\phi)|_{C^{2/3}(\mathcal{M})}.
\end{multline*}
Now scaling by $z \mapsto z/L$, multiplying by $\exp(L^{2}/4T)$ and
using \eqref{Eq:ApparitionKappaStar}
we easily arrive at \eqref{Eq:WeightHolderEstimate}. \par
\end{proof}
\noindent
{\bf Acknowledgment.} The author warmly thanks Sylvain Ervedoza, Armand Koenig, Pierre Lissy and Marius Tucsnak for useful discussions about a preliminary version of this paper. \par
\ \\
\noindent
{\bf AI-assistance statement.} The main proofs and the writing of this paper are the author's. However the author acknowledges the use of AI-assistance (Claude, Anthropic) to:
\begin{itemize}
\item[--] check the proofs. In particular, Claude found several errors in a first version of this paper, some of which were significant. It gave useful suggestions to correct them.
\item[--] look for typos and grammatical errors.
\item[--] find the appropriate bibliography. The references were then checked by the author.
\end{itemize}
%
%
%
%
%
%
%
%
%
%
%
\bibliographystyle{plain} 
\bibliography{heat}

@article{MRR-2016,
	author = {Martin, Philippe and Rosier, Lionel and Rouchon, Pierre},
	doi = {10.1093/amrx/abv013},
	eprint = {https://academic.oup.com/amrx/article-pdf/2016/2/181/6659444/abv013.pdf},
	issn = {1687-1200},
	journal = {Applied Mathematics Research eXpress},
	month = {09},
	number = {2},
	pages = {181-216},
	title = {On the Reachable States for the Boundary Control of the Heat Equation},
	url = {https://doi.org/10.1093/amrx/abv013},
	volume = {2016},
	year = {2016}}

@article{Ervedoza-Zuazua-2011,
	author = {Ervedoza, Sylvain and Zuazua, Enrique},
	date = {2011/12/01},
	doi = {10.1007/s00205-011-0445-8},
	id = {Ervedoza2011},
	isbn = {1432-0673},
	journal = {Archive for Rational Mechanics and Analysis},
	number = {3},
	pages = {975--1017},
	title = {Sharp Observability Estimates for Heat Equations},
	url = {https://doi.org/10.1007/s00205-011-0445-8},
	volume = {202},
	year = {2011}}

@article{Miller-Cost-2006,
	author = {Miller, Luc},
	doi = {10.1137/S0363012904440654},
	eprint = {https://doi.org/10.1137/S0363012904440654},
	journal = {SIAM Journal on Control and Optimization},
	number = {2},
	pages = {762-772},
	title = {The Control Transmutation Method and the Cost of Fast Controls},
	url = {https://doi.org/10.1137/S0363012904440654},
	volume = {45},
	year = {2006}}

@book{tartar2007introduction,
	author = {Tartar, Luc},
	doi = {10.1007/978-3-540-71483-5},
	fseries = {Lecture Notes of the Unione Matematica Italiana},
	isbn = {978-3-540-71482-8},
	issn = {1862-9113},
	language = {English},
	publisher = {Berlin: Springer},
	series = {Lect. Notes Unione Mat. Ital.},
	title = {An introduction to {Sobolev} spaces and interpolation spaces},
	volume = {3},
	year = {2007},
	zbl = {1126.46001},
	zbmath = {5152732}}

@book{lions2012non,
	author = {Lions, Jacques-Louis and Magenes, Enrico},
	publisher = {Springer Science \& Business Media},
	title = {Non-homogeneous boundary value problems and applications: Vol. 1},
	volume = {1},
	year = {2012}}

@article{seidman1979time,
	author = {Seidman, Thomas I},
	journal = {Journal of Mathematical Analysis and Applications},
	number = {1},
	pages = {17--20},
	publisher = {Elsevier},
	title = {Time-invariance of the reachable set for linear control problems},
	volume = {72},
	year = {1979}}

@article{fattorini1978reachable,
	author = {Fattorini, Hector O.},
	journal = {Proceedings of the Royal Society of Edinburgh Section A: Mathematics},
	number = {1-2},
	pages = {71--77},
	publisher = {Royal Society of Edinburgh Scotland Foundation},
	title = {Reachable states in boundary control of the heat equation are independent of time},
	volume = {81},
	year = {1978}}

@unpublished{lissy2026optimal,
	author = {Pierre Lissy},
	journal = {preprint arXiv: 2608.08041},
	title = {Optimal cost of fast boundary controls for the one-dimensional heat equation},
	year = {2026}}

@book{MR507701,
	author = {Bergman, Stefan},
	edition = {revised},
	mrclass = {30A31 (30A30 32H10)},
	mrnumber = {507701},
	mrreviewer = {M.\ Skwarczy\'nski},
	pages = {x+257},
	publisher = {American Mathematical Society, Providence, RI},
	series = {Mathematical Surveys},
	title = {The kernel function and conformal mapping},
	volume = {No. V},
	year = {1970}}

@book{MR2768550,
	author = {Bahouri, Hajer and Chemin, Jean-Yves and Danchin, Rapha\"el},
	doi = {10.1007/978-3-642-16830-7},
	isbn = {978-3-642-16829-1},
	mrclass = {35-02 (35L72 35Q30 42-02 42B37 76B03 76D03 76N10)},
	mrnumber = {2768550},
	mrreviewer = {Peter\ R.\ Massopust},
	pages = {xvi+523},
	publisher = {Springer, Heidelberg},
	series = {Grundlehren der mathematischen Wissenschaften},
	title = {Fourier analysis and nonlinear partial differential equations},
	url = {https://doi.org/10.1007/978-3-642-16830-7},
	volume = {343},
	year = {2011}}

@unpublished{ortner2012note,
	author = {Ortner, Christoph and S{\"u}li, Endre},
	journal = {arXiv preprint arXiv:1202.3970},
	title = {A Note on Linear Elliptic Systems on {$\mathbb{R}^{d}$}},
	year = {2012}}

@article{Darde-Ervedoza-2018,
	author = {Dard\'e, J\'er\'emi and Ervedoza, Sylvain},
	doi = {10.1137/16M1093215},
	eprint = {https://doi.org/10.1137/16M1093215},
	journal = {SIAM Journal on Control and Optimization},
	number = {3},
	pages = {1692-1715},
	title = {On the Reachable Set for the One-Dimensional Heat Equation},
	url = {https://doi.org/10.1137/16M1093215},
	volume = {56},
	year = {2018}}

@incollection{SohrSpecovius1998,
	author = {Hermann Sohr and Maria Specovius-Neugebauer},
	booktitle = {Theory of the Navier--Stokes Equation},
	editor = {J. G. Heywood and K. Masuda and R. Rautmann and V. A. Solonnikov},
	publisher = {World Scientific},
	series = {Series on Advances in Mathematics for Applied Sciences},
	title = {The Stokes Problem on Exterior Domains in Homogeneous Sobolev Spaces},
	volume = {47},
	year = {1998}}

@article{Fattorini-Russell,
	author = {Fattorini, Hector O. and Russell, David L.},
	date = {1971/01/01},
	doi = {10.1007/BF00250466},
	id = {Fattorini1971},
	isbn = {1432-0673},
	journal = {Archive for Rational Mechanics and Analysis},
	number = {4},
	pages = {272--292},
	title = {Exact controllability theorems for linear parabolic equations in one space dimension},
	url = {https://doi.org/10.1007/BF00250466},
	volume = {43},
	year = {1971}}

@book{grisvard1985elliptic,
	author = {Grisvard, Pierre},
	isbn = {9781611972030},
	publisher = {Society for Industrial and Applied Mathematics},
	series = {Classics in Applied Mathematics},
	title = {Elliptic Problems in Nonsmooth Domains},
	url = {https://books.google.com/books?id=LdMk297ExhkC},
	year = {1985}}

@article{hartmann2020reachable,
	author = {Hartmann, Andreas and Kellay, Karim and Tucsnak, Marius},
	journal = {Journal of the European Mathematical Society},
	number = {10},
	pages = {3417--3440},
	title = {From the reachable space of the heat equation to Hilbert spaces of holomorphic functions},
	volume = {22},
	year = {2020}}

@book{BoyerFabrie2012,
	author = {Boyer, Franck and Fabrie, Pierre},
	publisher = {Springer},
	series = {Applied Mathematical Sciences},
	title = {{Mathematical Tools for the Study of the Incompressible Navier-Stokes Equations and Related Models}},
	volume = {183},
	year = {2012}}

@book{Bell2015,
	address = {Andover, England, UK},
	author = {Bell, Steven R.},
	doi = {10.1201/b19222},
	isbn = {978-0-42916289-3},
	journal = {Taylor {\&} Francis},
	month = nov,
	publisher = {Taylor {\&} Francis},
	title = {{The Cauchy Transform, Potential Theory and Conformal Mapping}},
	year = {2015}}

@article{carleman1927,
	author = {Carleman, Torsten},
	journal = {Ark. Fys.},
	number = {4},
	pages = {1--5},
	title = {{Sur un th{\'e}or{\`e}me de Weierstrass}},
	volume = {20},
	year = {1927}}

@article{Kellay-Normand-Tucsnak,
	author = {Kellay, Karim and Normand, Thomas and Tucsnak, Marius},
	doi = {10.2140/apde.2022.15.891},
	fjournal = {Analysis \& PDE},
	issn = {2157-5045,1948-206X},
	journal = {Anal. PDE},
	mrclass = {93B03 (30H20 35K08 93B05 93C20)},
	mrnumber = {4478293},
	mrreviewer = {Michela\ Egidi},
	number = {4},
	pages = {891--920},
	title = {Sharp reachability results for the heat equation in one space dimension},
	url = {https://doi.org/10.2140/apde.2022.15.891},
	volume = {15},
	year = {2022}}

@article{Hartmann-Orsoni,
	author = {Hartmann, Andreas and Orsoni, Marcu-Antone},
	doi = {10.1016/j.matpur.2021.04.009},
	fjournal = {Journal de Math\'ematiques Pures et Appliqu\'ees. Neuvi\`eme S\'erie},
	issn = {0021-7824,1776-3371},
	journal = {J. Math. Pures Appl. (9)},
	mrclass = {30H20 (35K05 46E15 93B03)},
	mrnumber = {4248466},
	mrreviewer = {Serhii\ V.\ Gryshchuk},
	pages = {181--201},
	title = {Separation of singularities for the {B}ergman space and application to control theory},
	url = {https://doi.org/10.1016/j.matpur.2021.04.009},
	volume = {150},
	year = {2021}}

@article{Orsoni,
	author = {Orsoni, Marcu-Antone},
	doi = {10.1016/j.jfa.2020.108852},
	fjournal = {Journal of Functional Analysis},
	issn = {0022-1236,1096-0783},
	journal = {J. Funct. Anal.},
	mrclass = {93B03 (30H20 35K05 44A10)},
	mrnumber = {4211029},
	mrreviewer = {Nino\ Manjavidze},
	number = {7},
	pages = {Paper No. 108852, 17},
	title = {Reachable states and holomorphic function spaces for the 1-D heat equation},
	url = {https://doi.org/10.1016/j.jfa.2020.108852},
	volume = {280},
	year = {2021}}

@article{Darde-Ervedoza-2019,
	author = {Dard\'e, J\'er\'emi and Ervedoza, Sylvain},
	doi = {10.2140/apde.2019.12.1455},
	fjournal = {Analysis \& PDE},
	issn = {2157-5045,1948-206X},
	journal = {Anal. PDE},
	mrclass = {93B05 (35K05 42A38 93C20)},
	mrnumber = {3921310},
	mrreviewer = {Emmanuel\ Tr\'elat},
	number = {6},
	pages = {1455--1488},
	title = {On the cost of observability in small times for the one-dimensional heat equation},
	url = {https://doi.org/10.2140/apde.2019.12.1455},
	volume = {12},
	year = {2019}}

@article{Lissy-2015,
	author = {Lissy, Pierre},
	doi = {10.1016/j.jde.2015.06.031},
	fjournal = {Journal of Differential Equations},
	issn = {0022-0396,1090-2732},
	journal = {J. Differential Equations},
	mrclass = {93B05 (35K20 93C20)},
	mrnumber = {3377528},
	mrreviewer = {Luz\ de Teresa},
	number = {10},
	pages = {5331--5352},
	title = {Explicit lower bounds for the cost of fast controls for some 1-{D} parabolic or dispersive equations, and a new lower bound concerning the uniform controllability of the 1-{D} transport-diffusion equation},
	url = {https://doi.org/10.1016/j.jde.2015.06.031},
	volume = {259},
	year = {2015}}

@article{Miller04,
	author = {Miller, Luc},
	doi = {10.1016/j.jde.2004.05.007},
	fjournal = {Journal of Differential Equations},
	issn = {0022-0396,1090-2732},
	journal = {J. Differential Equations},
	mrclass = {93B05 (35A22 35K05 93B07 93C20)},
	mrnumber = {2076164},
	mrreviewer = {Enrique\ Zuazua},
	number = {1},
	pages = {202--226},
	title = {Geometric bounds on the growth rate of null-controllability cost for the heat equation in small time},
	url = {https://doi.org/10.1016/j.jde.2004.05.007},
	volume = {204},
	year = {2004}}

@article{Tenenbaum-Tucsnak,
	author = {Tenenbaum, G{\'e}rald and Tucsnak, Marius},
	doi = {10.1016/j.jde.2007.06.019},
	fjournal = {Journal of Differential Equations},
	issn = {0022-0396,1090-2732},
	journal = {J. Differential Equations},
	mrclass = {93C25 (11N36 93B05 93B07 93C20)},
	mrnumber = {2363470},
	mrreviewer = {Luz\ de Teresa},
	number = {1},
	pages = {70--100},
	title = {New blow-up rates for fast controls of {S}chr\"odinger and heat equations},
	url = {https://doi.org/10.1016/j.jde.2007.06.019},
	volume = {243},
	year = {2007}}

@article{Guichal-1985,
	author = {G\"uichal, Edgardo N.},
	doi = {10.1016/0022-247X(85)90313-0},
	fjournal = {Journal of Mathematical Analysis and Applications},
	issn = {0022-247X},
	journal = {J. Math. Anal. Appl.},
	mrclass = {93C20 (49A22)},
	mrnumber = {805273},
	mrreviewer = {T.\ I.\ Seidman},
	number = {2},
	pages = {519--527},
	title = {A lower bound of the norm of the control operator for the heat equation},
	url = {https://doi.org/10.1016/0022-247X(85)90313-0},
	volume = {110},
	year = {1985}}

@article{Seidman-1984,
	author = {Seidman, Thomas I.},
	doi = {10.1007/BF01442174},
	fjournal = {Applied Mathematics and Optimization},
	issn = {0095-4616,1432-0606},
	journal = {Appl. Math. Optim.},
	mrclass = {49E05 (49A22)},
	mrnumber = {743923},
	mrreviewer = {Claudia\ Simionescu-Badea},
	number = {2},
	pages = {145--152},
	title = {Two results on exact boundary control of parabolic equations},
	url = {https://doi.org/10.1007/BF01442174},
	volume = {11},
	year = {1984}}

@book{Saitoh-Book,
	author = {Saitoh, Saburou},
	isbn = {0-582-31758-4},
	mrclass = {46E20 (44A05 46N20 47B38)},
	mrnumber = {1478165},
	mrreviewer = {Pa\c sc\ G\u avru\c ta},
	pages = {xiv+280},
	publisher = {Longman, Harlow},
	series = {Pitman Research Notes in Mathematics Series},
	title = {Integral transforms, reproducing kernels and their applications},
	volume = {369},
	year = {1997}}

@article{Aikawa-Hayashi-Saitoh,
	author = {Aikawa, Hiroaki and Hayashi, Nakao and Saitoh, Saburou},
	doi = {10.1080/17476939008814430},
	fjournal = {Complex Variables. Theory and Application. An International Journal},
	issn = {0278-1077,1563-5066},
	journal = {Complex Variables Theory Appl.},
	mrclass = {30C40 (35K05 46E22)},
	mrnumber = {1055936},
	mrreviewer = {Manfred\ W.\ Kracht},
	number = {1},
	pages = {27--36},
	title = {The {B}ergman space on a sector and the heat equation},
	url = {https://doi.org/10.1080/17476939008814430},
	volume = {15},
	year = {1990}}
\addcontentsline{toc}{section}{References}
\end{document}